\newtheorem{teo}{Theorem}[section]
\newtheorem{lem}[teo]{Lemma} 
\newtheorem{prop}[teo]{Proposition}
\DeclareRobustCommand{\qed}{%
  \ifmmode 
  \else \leavevmode\unskip\penalty9999 \hbox{}\nobreak\hfill
  \fi
  \quad\hbox{\qedsymbol}}
\newcommand{\openbox}{\leavevmode
  \hbox to.77778em{%
  \hfil\vrule
  \vbox to.675em{\hrule width.6em\vfil\hrule}%
  \vrule\hfil}}
\newcommand{\qedsymbol}{\openbox}
\newenvironment{proof}[1][\proofname]{\par
  \normalfont
  \topsep6\p@\@plus6\p@ \trivlist
  \item[\hskip\labelsep\itshape
    #1.]\ignorespaces
}{%
  \qed\endtrivlist
}
\newcommand{\proofname}{Proof}
\begin{document}

\title{On sheaf cohomology and natural expansions \thanks{Ana Luiza Tenorio: Supported by CAPES Grant Number 88882.377949/2019-01}
}

\author{
		{Ana Luiza Tenório, IME-USP, \small{ana.tenorio@usp.br}}
		\and
		{Hugo Luiz Mariano, IME-USP, {\small hugomar@ime.usp.br}}
	}

\maketitle

\begin{abstract}
In this survey paper, we present  \v{C}ech and sheaf cohomologies -- themes that were presented by Koszul in  University of S\~ao Paulo (\cite{koszul1957faisceaux}) during his visit in the late 1950s -- we present expansions for categories of generalized sheaves (i.e, Grothendieck toposes), with examples of applications in other cohomology theories and other areas of mathematics, besides providing motivations and historical notes. We conclude explaining the difficulties in establishing a cohomology theory for elementary toposes, presenting alternative approaches by considering constructions over quantales, that provide structures similar to sheaves, and indicating researches related to logic: constructive (intuitionistic and linear) logic for toposes, sheaves over quantales, and homological algebra.     

\end{abstract}

\section{Introduction}
\label{intro}

 Sheaf Theory explicitly began with the work of J. Leray in 1945 \cite{leray1945forme}. The nomenclature ``sheaf'' over a space $X$,  in terms of closed subsets of a topological space $X$,  first appeared in 1946, also in one of Leray's works, according to \cite{gray1979fragments}. He was interested in solving partial differential equations and build up a strong tool to pass local properties to global ones. Currently, the definition of a sheaf over $X$ is given by a ``coherent family'' of structures indexed on the lattice of open subsets of $X$ or as étale maps (= local homeomorphisms) into $X$. Both formulations emerged in the late  1940s and early 1950s in Cartan’s seminars and, in modern terms, they are intimately related by an equivalence of categories.
 
 H. Cartan proposed a concept of “coherent family” for ideals \cite{cartan1944ideaux} before Leray’s study on sheaves. His idea is more related to the development of sheaf theory in Complex Analysis, where certain conditions that hold for a point remains valid for the neighborhood of the point - as convergence properties of power series. On the other hand, the presentation of sheaves as étales spaces - due to Lazard -  is closer to Algebraic Topology: sections of étale maps compose the construction of a local section functor (explained in Section \ref{sec:5}). The global section functor (introduced in Section \ref{sec:6}) gives rise to cohomology groups with coefficients in a sheaf, which computes the obstruction from local input to global input.

We will define sheaves, using open sets, as a special kind of functor. The language of category theory will help us to deal with sheaf cohomology and allows its generalizations. However, the relation between sheaves and étales maps is important to obtain geometric intuition about the object's capacity to pass local problems to global ones through cohomology with coefficients in a sheaf.

In the 1950s, sheaves on topological spaces and their cohomology were studied by the greatest mathematicians of the time. In addition to those previously mentioned, we list J.P. Serre, A. Grothendieck, O. Zariski, and R. Godement; the latter managed to establish a standard nomenclature in his book ``Topologie alg{\'e}brique et th{\'e}orie des faisceaux'' \cite{godement1958topologie}, one of the most important references about sheaf theory so far.

At the same time, some of them were at the institute known today as Instituto de Matemática e Estatística (IME-USP), at Universidade de São Paulo. The influence of the French school on the formation of Brazilian mathematicians initiated with the arrival of A. Weil (of Weil’s conjecture and founder of Bourbaki group) in 1945 and reached O. Zariski, J. Dieudonné, J-L Koszul, A. Grothendieck, among others.  We highlight that in 1956 J-L. Koszul lectured a course about sheaves and cohomology at IME-USP, whose class notes were published in 1957 \cite{koszul1957faisceaux}, and the  A. Grothendieck’s course about topological vector spaces of 1953 was published five years later \cite{grothendieck1958espaces}.

Turning to the matter of theory’s development, J-P. Serre was the one who first applied sheaf theory in Algebraic Geometry in \cite{10.2307/1969915}, and later A. Grothendieck successfully replicated sheaf methods to spaces where the correspondent topology is not adequate. His notion of {\em topos} used a collection of morphisms in a small category  satisfying certain rules - a Grothendieck topology - to extend the notion of open covers in the definition of sheaves on topological spaces. This construction was essential to prove Weil’s conjectures, under étale cohomology (but it provided others, such as crystalline and flat cohomology), and to reformulate Algebraic Geometry. The Séminaire de Géométrie Algébrique du Bois Marie compose the enormous project that found proof for Weil’s conjectures. The project started with Bourbaki Seminars about foundations of Algebraic Geometry, published in 1962 \cite{grothendieck1962fondements} and by 1974 it was finished with  Deligne's first proof of the third conjecture \cite{deligne1974conjecture}.

What Grothendieck named \textit{topos} in \cite{grothendieck1972topos}, is nowadays known  as a \textit{Grothendieck topos} (a category of sheaves over a {\em site}, i.e.  a pair $(\mathcal{C},J)$, with $\mathcal{C}$ a small category and $J$ a Grothendieck topology); the general notion of  topos, or elementary topos, is due to the work of W. Lawvere and M. Tierney in the early 1970s. They realized that a Grothendieck topos have categorical properties that make it close to the category $Set$  of all sets and functions. For example, sheaves admit exponential objects that are analogs of the set $A^B$ of all function from $B$ to $A$, and there is an object of truth-values (subobject classifier) that, in the category $Set$, is the set $\{true,false\}$. Thus, by only assuming that a category has   a subobject classifier and satisfies some conditions (as cartesian closed to guarantee the existence of an exponential object) they reached the definition of (elementary) topos, such that any Grothendieck topos is a topos but the converse  does not hold.

Soon the study of topos theory developed many fronts. For instance, the description of an internal language (Mitchell–Bénabou language) and its Kripke-Joyal semantic, variations of Cohen’s forcing techniques using toposes, and the establishment of higher-order logic in terms of categories.

In this survey, we present sheaf cohomology and some of its possible extensions. Section 2 is devoted to preliminaries: we remind that homological algebra deals, mainly, with abelian categories (we will define this concept, but for now the reader can replace abelian categories by the category $Ab$, of all abelian groups, or $R$-$Mod$, of all modules over a given unitary ring $R$). Since most of the literature deals with specific abelian categories such as modules over rings, we provide preliminaries about cohomology for any abelian category. Besides that, we explain how to extract abelian categories from a not necessarily abelian category $\mathcal{C}$, requesting that $\mathcal{C}$ has finite limits and satisfies some other regularity properties, which is the case for the category of sheaves over topological spaces and, more generally, a Grothendieck topos. In fact, we are particularly interested in categories $\mathcal{C}$ that  we could call a ``Set-like” category, i.e, a category that keeps the basics properties and allows to perform constructions that we usually made in the category $Set$ - of all sets and functions - that play the same role of $Set$ but could be more general than just $Set$. The essential part is: since we use sets to define another structure (topological spaces, groups, rings, manifolds), we can use a $Set$-$like$ category $\mathcal{C}$ to construct other categories. In the particular case we will see, categories with abelian group structure.

In Section 3, we introduce the basics of sheaf theory, sheaf cohomology, and \v{C}ech cohomology, following the work of H. Cartan, J-L. Koszul and R. Godement.

In Section 4, we define Grothendieck toposes, exploring elementary toposes to furnish the internal logical tool, and apply it to simplify arguments in\\ Grothendieck topos cohomology.

We do not show new results, but we do point out the main ideas behind proofs that are already known and choose a presentation that allows awareness of how Grothendieck topos cohomology extends sheaf cohomology. Some demonstrations are omitted because they require excessively technical machinery (such as spectral sequences) and so would be out of our purpose of making this text a gentle introduction to topos cohomology.

In Section 5, we clarify that the current topos cohomology has issues - the definition of flabby sheaves does not work properly and the category of abelian groups over toposes that are not Grothendieck does not have enough injectives - and a strong dependence on classical logic that hinders the ``internalization'' of these notions to the intrinsic intuitionistic  (constructive) character of the toposes. We describe some attempts to address these problems, including extensions of topos cohomology over ``sheaf-like'' categories that are internally governed by an even more general logic: the linear logic.

\section{Preliminaries}
\label{sec:1}

    Summarily, a cohomology theory associates a sequence of algebraic objects to a certain space. The objects can be abelian groups, the space can be any topological space, and we can associate one with the other using chain complexes. However, the reader a bit more familiar with Homological Algebra knows that, instead of abelian groups, we can work with modules over commutative rings, vector bundles over topological spaces, or even abelian sheaves. This happens because these objects can be collected and organized in their respective categories, and these ones are examples of abelian categories. To summarize, when we work with cohomology we are working with functors into abelian categories.
    
    In this section, we present the basics of abelian categories, state the main results of Homological Algebra in this general setting, and define the notion of abelian group object - which later will provide a technique to extract abelian categories from toposes.
    
    {\em We will assume that the reader is familiar with the basic notions of category theory: category, functor, natural transformation, (co)limits,  subobjects, generators and equivalence of categories.}

\subsection{Abelian Categories}
\label{sec:2}

Let $s$ and $t$ be objects in a category $\mathcal{C}$. Recall that: If for all object $a$ in $\mathcal{C}$ there is a unique morphism $s  \rightarrow a$ then $s$ is a \textit{initial object}; if there is an unique morphism $a  \rightarrow t$, then $t$ is a \textit{terminal object}. The uniqueness property satisfied by a initial (respectively, terminal) object ensures that it is unique up to (unique) isomorphism. In case an object is simultaneously initial and terminal, it is called a zero object. After these preliminaries, we will change notation: initial and terminal objects are denoted by $0$ and $1$, respectively.

In categories with some zero object, we also have a notion of null morphism, i.e, a morphism $f: A \rightarrow B$ that factors through any  zero object (since they are pairwise isomorphic). The null morphism from $A$ to $B$ is unique and is denoted by $0_{A,B}$ or just by $0$. 

Now we can define an important concept to construct cohomology in general abelian categories.

Let $f: A \rightarrow B$ be a morphims in a category $\mathcal{C}$ with zero object. A morphism $k: K \rightarrow A$ is the \textit{kernel of $f$} if $f \circ k = 0$ and, for all morphism $h : C \to A$ such that $f \circ h = 0$, there is a unique morphism $h' : C \to K$ such that $h = k \circ h'$. Or simply, if $k$ is the equalizer of $f$ and the null morphism $0$. Diagramatically, 

\begin{center}
  \begin{tikzcd}
 K \arrow[r,"k"] \arrow[rr, controls={+(0.5,0.5) and +(-1,0.8)},"0_{K,B}"] 
& A \arrow[r, yshift=0.7ex, "f"] \arrow[r, yshift=-0.7ex, swap, "0_{A,B}"]
& B \\
C \arrow[u, dashrightarrow, "h'"] \arrow[ur, swap,"h"] \arrow[urr, bend right, swap,"0_{C,B}"] 
\end{tikzcd}
  \end{center}

The cokernel of $f$ is defined dually, i.e., by the interchanging  of source and target of all arrows. Throughout the text, we will denote the morphism $k$, kernel of $f$, by $ker(f)$, and the object $K$ associated to it by $Ker(f)$. Analogously, for the cokernel.

If the set of morphisms $Hom(A,B)$ of a category $\mathcal{C}$  has the structure of an abelian group, and the composition of morphisms is bilinear, then $\mathcal{C}$ is an $Ab$-category (or \textit{preadditive}). Here $Ab$ is the category of abelian groups and we adopted the nomenclature of $Ab$-category to familiarize the reader with the idea of an enriched category. In this case, $\mathcal{C}$ is enriched over $Ab$, so $Hom(A,B)$ is an object in $Ab$, for every $A, B$ objects in $\mathcal{C}$.

Examples of $Ab$-categories are the categories $Ab$, of all abelian groups and its homomorphisms, and  $R$-$Mod$, of all left modules over a ring $R$ and homomorphisms. More intricate examples came from categories useful in homological algebra whose objects are complexes of abelian groups, complexes of modules over a ring and  filtered modules over a ring. Moreover, every triangulated category\footnote{This is an important category in the study of Homological Algebra, but much more sophisticated than the previously examples. We will not explain it in the survey.} is an $Ab$-category.

A \textit{biproduct}\label{biprod-def} is a quintuple $(P,p_A,p_B,s_A,s_B)$ such that:
\begin{center}
       $p_A: P \rightarrow A$, $p_B: P \rightarrow B$, $s_A: A \rightarrow P$ and $s_B: 		 B \rightarrow P$ satisfies the equations: $p_A \circ s_A = id_A$, $p_B \circ s_B = 		 id_B$, $p_A \circ s_B = 0$, $p_B \circ s_A = 0$ and  $s_A \circ p_A + s_B \circ 		p_B = id_P$  
\end{center}

We observe that in $Ab$-categories the existence of biproduct is equivalent to the existence of product and, also, the existence of coproduct. 

When a category is an $Ab$-category that has biproducts and a zero object, then it is called an \textit{additive} category. So, with the abelian groups structure in the set of morphisms, we obtain that the zero of $Hom(A,B)$ coincides with the null morphism $0_{A,B}$. See a demonstration of this in \cite[Chap. 1.2]{borceux1994handbook}. This is  an interesting property since additive categories have to indirectly handle null morphisms, through kernels and cokernels.

An additive category  $\mathcal{C}$ is an \textbf{abelian category} if it satisfies also the following conditions:
\begin{enumerate}
    \item[AB1] Every morphism has kernel and cokernel.
    \item[AB2] Every monomorphism is a kernel and every epimorphism is a cokernel.
\end{enumerate}

Except for filtered modules over a ring, and, in general, triangulated categories, all the given examples of $Ab$-categories are also abelian categories. Additionally, an important exemplar of an abelian category is the category of abelian sheaves (see Theorem \ref{abE-th}). 

Note that $AB1$ allow us to construct kernels of cokernels and vice versa. Furthermore, for any morphism $f$ in an abelian category, we have:
\begin{center}
    \begin{tikzcd}
Ker (f) \arrow[r, "ker(f)", tail] & A \arrow[r, "f"] \arrow[d, "coker(ker(f))"', two heads] & B \arrow[r, "coker (f)", two heads] & Coker (f) \\
 & Coker(ker(f)) \arrow[r, "\Bar{f}", dashed] & Ker(coker(f)) \arrow[u, "ker(coker(f))"', tail] & 
\end{tikzcd}
\end{center}

It is not difficult to see there is a unique $\Bar{f}: Coker(ker(f))  \to Ker(coker(f))$ which makes the above square commutative. An important observation here is that the $AB2$ axiom is equivalent to the condition of $\Bar{f}$ be an isomorphism. Defining $Im(f) = Ker(coker(f))$ and $Coim(f) = Coker(ker(f))$, we can say that asking for $\Bar{f}$ be an isomorphism is the same as asking for the validity of the Fundamental Homomorphism Theorem, in an abstract form.

Given an abelian category $\mathcal{C}$ we can add $ABn$ axioms. In this survey, the important one is $AB5$.
\begin{enumerate}
    \item[AB3] Given a family $\{A_i\}_{i \in I}$ of objects in $\mathcal{C}$, then the direct sum $\bigoplus\limits_{i \in I} A_i$ exists.
    \item[AB4] The $AB3$ axiom holds and direct sum of family of monomorphisms also is a monomorphism.
    \item[AB5] The $AB3$ axiom holds and if $\{A_i\}_{i \in I}$ is a direct family of subobjects of an object $A$ in $\mathcal{C}$, and $B$ is any subobject of $A$, then $(\sum\limits_{i \in I}A_i)\cap B = \sum\limits_{i \in I}(A_i \cap B)$, where the capital-sigma denotes sup of $A_i$, and the intersection denotes inf of subobjects.
\end{enumerate}

The $AB5$ will be central because of the following Grothendieck's Theorem:

\begin{center}
  If an abelian category satisfies $AB5$ and has a generator then it has enough injectives \cite[Theorem 1.10.1]{grothendieck1957}
\end{center} \label{grotheorem}

We will use this Theorem to show that the abelian categories defined from Grothendieck toposes are good enough to develop a cohomology theory. Now, let's return to our preliminaries.

Most part of the simplest  notions and proofs that occur in ``concrete'' abelian categories (as $Ab$, or $R$-$Mod$) can be reproduced in general abelian categories by the systematic use of universal properties. However, more sophisticated  results in Homological Algebra demand specific techniques to handle the fact that we do not know what kind of structure the objects have. For example, when we consider the category $Ab$, we know that the objects are abelian groups. However, if we have to deal with an arbitrary abelian category this information is not (directly) available. To manage this delicate scenario there are at least two ways of proving results regarding general abelian categories.

The simplest technique is apply the Freyd-Mitchell embedding Theorem -  originally stated in \cite[Theorem 7.34]{freyd1964abelian} and reformulated in modern terms in \cite[ Theorem 1.6.1]{weibel1994introduction} - that guarantees we can fully embed {\em small} abelian categories into the category $R$-Mod, for some ring $R$. Roughly speaking, this means all morphisms that exist in $R$-$Mod$, such as kernels and cokernels (quocients), and all diagram chasing that can be done in $R$-$Mod$, still holds for the correspondent small abelian category. We recommend \cite{stacks-project} for more results related to the Freyd-Mitchell embedding Theorem. 

Nevertheless, there are non small abelian categories so we may use a stronger but more complex technique: construct the notion of \textit{pseudoelement}, as nominated in \cite{borceux1994handbook} (or \textit{generalized element}, as in \cite{zbMATH01216133}). This technique enable us to do ``half of the job with elements''. More precisely, to check by this simulation of ``elements'' if some candidate arrow, previously build by combined applications of universal properties, indeed satisfies some desired property.

Once mentioned these two techniques, we state that the famous snake lemma holds in any abelian category \cite[Section 1.10]{borceux1994handbook}. We will skip even state it here, but we highlight that, as a general ambient for Homological Algebra, abelian categories were built so that the Snake Lemma is valid. When Cartan's and Eilenberg's book ``Homological Algebra'' appeared in 1956 \cite{eilenberg1956homological} the theory was exhibited for categories of modules over rings but was also know that it could be replicated for other structures, for instance, abelian sheaves. This motivated A. Grothendieck - and not only him - to define abelian categories and establish Homological Algebra for it in \cite{grothendieck1957}. Nowadays, we have even more general categories where the Snake Lemma holds, for instance, the \textit{homological categories} \cite{borceux2004mal}  (observe that the lemma hold for non-abelian groups, even so they do not form an additive category \cite[Chap 1.2]{borceux1994handbook}). However, abelian categories still are the most common scenario for the study of (co)homologies.

\subsection{Homological Algebra}
\label{sec:3}
The reader familiar with Homological Algebra techniques for a particular abelian category (as presented in \cite{weibel1994introduction}, for example) can skip this subsection. However, if there is a curiosity to see how to construct cohomology in the abstract setting of any abelian category, we exhibit the modifications that have to be done to define the basics concepts. Here we state without proof results that are needed in Sections \ref{sec:6} and \ref{sec:12}.

For any abelian category $\mathcal{C}$ we define a \textit{cochain complex} by taking sequences $\{C^q\}_{q \in \mathds{Z}}$ of objects in $\mathcal{C}$, and endow it with coboundary morphisms $d^q_C: C^q \rightarrow C^{q+1}$ such that $d^{q+1} \circ d^q = 0$, for all $q \in \mathds{Z}$. A cochain complex is denoted by $C^{\bullet},$ and we establish morphisms of complexes $h^{\bullet}: C^{\bullet} \rightarrow D^{\bullet}$ with a colection of morphism $h^q: C^q \rightarrow D^q$ such that $h^{q+1} \circ d^q_C = d^{q+1}_D \circ h^q$, for all $q \in \mathds{Z}.$ Observe that, with coordinatewise composition and identities, this forms a category $Ch(\mathcal{C})$, called category of cochain complex of $\mathcal{C}$, and it is as abelian category whenever $\mathcal{C}$ is abelian \cite[Theorem 1.2.3]{weibel1994introduction}. 

Since $d^{q} \circ d^{q-1} = 0$, we have that $0 \subseteq Im(d^{q-1}) \subseteq Ker(d^{q}) \subseteq C^q$. So it is possible to define the $q$-th cohomology object of $C^\bullet$ by $$H^q(C^{\bullet}) = Ker(d^{q})/Im(d^{q-1}) = Coker(Im(d^{q-1}) \rightarrow Ker(d^{q})).$$
where the image is $Im(d^{q-1})=Ker(coker(d^{q-1}))$.

As the reader may suspect, a sequence of objects in an abelian category $\dots \xrightarrow{f_{q-1}} A_q \xrightarrow{f_q} A_{q+1} \xrightarrow{f_{q+1}} \dots$ is exact if $Ker(f^q) = Im(f^{q-1})$. So cohomology measures the failure of exactness in the cochain complex.

Let $f^{\bullet} : C^{\bullet} \rightarrow D^{\bullet}$ be a complex morphism. Since we are working with arbitrary abelian categories, define an induced morphism  $H^q(f) : H^q(C^\bullet) \to H^q(D^\bullet)$, $q \in \mathds{Z}$, is more complicated than usual, but lets describe the idea:

For each $q \in \mathds{Z}$, a morphism $f^q : C^q \to D^q$ restricts to $$f^q_K : Ker(d_C^{q}) \to Ker(d_D^{q}) \mbox{ and to } f^q_I : Im(d_C^{q-1}) \to Im(d_D^{q-1})$$ The above follows directly from diagram chases, by the universal properties of kernels and cokernels. The coboundary morphism also provides a morphism $\alpha^{q}:Im(d_C^{q-1}) \rightarrow Ker(d_C^{q})$ such that $Coker(\alpha^q_C) = H^q(C^{\bullet})$. By the universal property of cokernel, there is a unique morphism $Coker(\alpha^q_C) \rightarrow Ker(d^{q}_D)$ as below:

\begin{center}
    \begin{tikzcd}
Im(d^{q-1}_C) \arrow[d,"f^q_I"]  \arrow[r, "\alpha^q_C", tail] & Ker( d^{q}_C) \arrow[d,"f^q_K"] \arrow[r, two heads] & Coker  (\alpha^q_C) \arrow[ld, dashed] \\
Im(d^{q-1}_D) \arrow[r, "\alpha^q_D", tail]  & Ker( d_D^{q}) \arrow[r, two heads] &  Coker  (\alpha^q_D)
\end{tikzcd}
\end{center}

Completing the bottom part of this diagram with the cokernel of $\alpha^q_D$ we obtain a unique morphism $H^q(C^{\bullet}) \cong Coker(\alpha^q_C) \rightarrow Coker(\alpha^q_D) \cong H^q(D^{\bullet}) $. This induced morphism is  $H^q(f) : H^q(C^\bullet) \to H^q(D^\bullet)$. Clearly, the mapping $f \mapsto H^q(f)$ determines a (covariant) functor $H^q : Ch({\cal C}) \to {\cal C}$, for all $q \in \mathds{Z}$. 

Given two complex morphisms $f^{\bullet},g^{\bullet}: C^{\bullet} \rightarrow D^{\bullet},$ they are called \textit{homotopic} if, for each $q \in \mathds{Z}$, there is $h^q: C^q \rightarrow D^{q-1}$ (called \textit{cochain homotopy}) such that $f^q - g^q = d^{q-1}_D\circ h^q + h^{q+1}\circ d^q_C$. Chain homotopies are important because they relate two different morphisms through their induced maps in the cohomology objects. More precisely:
\begin{prop}\label{homotopic}
If $f^{\bullet}$ is homotopic to $g^{\bullet}$, then $H^q(f^{\bullet}) \cong H^q(g^{\bullet})$, for all $q \in \mathds{Z}$.
\end{prop}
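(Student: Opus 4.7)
The plan is to show that the induced map $H^q(f^\bullet - g^\bullet)$ is the zero morphism and then invoke additivity of the functor $H^q$; in fact what we really expect to prove is the stronger equality $H^q(f^\bullet) = H^q(g^\bullet)$, from which the claimed isomorphism follows trivially. The argument splits cleanly into (a) a formal part showing that $H^q$ respects the abelian group structure on $\mathrm{Hom}$-sets, and (b) a chain-level verification that a null-homotopic morphism induces the zero map on cohomology.

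For (a), I would first observe that $Ch(\mathcal{C})$ is abelian and that the constructions $Ker(d^q)$, $Im(d^{q-1})$ and the quotient $Coker(\alpha^q_C)$ used to define $H^q$ are all functorial and additive in the morphism $f^q$; consequently $H^q : Ch(\mathcal{C}) \to \mathcal{C}$ is an additive functor. Hence $H^q(f^\bullet) - H^q(g^\bullet) = H^q(f^\bullet - g^\bullet)$, and the problem reduces to showing that if $\varphi^q = d_D^{q-1}\circ h^q + h^{q+1}\circ d_C^q$ for all $q$, then $H^q(\varphi^\bullet) = 0$.

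For (b), the cleanest route is to appeal to the Freyd–Mitchell embedding, or equivalently to pseudoelements as discussed in the preceding subsection, so that we may chase elements as if $\mathcal{C} = R$-$Mod$. Given $x \in Ker(d_C^q)$, compute $\varphi^q(x) = d_D^{q-1}(h^q(x)) + h^{q+1}(d_C^q(x)) = d_D^{q-1}(h^q(x)) + 0 \in Im(d_D^{q-1})$, so the restriction of $\varphi^q$ to $Ker(d_C^q)$ factors through $Im(d_D^{q-1})$. Composing with the projection $Ker(d_D^q) \twoheadrightarrow Coker(\alpha_D^q) \cong H^q(D^\bullet)$ therefore yields zero, and by the construction of $H^q(\varphi^\bullet)$ via the universal property of the cokernel $Coker(\alpha_C^q) \cong H^q(C^\bullet)$, we conclude $H^q(\varphi^\bullet) = 0$.

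The only real obstacle is carrying out the diagram chase in (b) inside an arbitrary abelian category, where we cannot literally evaluate $\varphi^q$ on elements of $Ker(d_C^q)$. This is precisely what the Freyd–Mitchell / pseudoelement machinery is designed to handle; alternatively one can do it by a purely arrow-theoretic argument, successively invoking the universal properties of $Ker(d_C^q)$ (to kill $h^{q+1}\circ d_C^q$) and of $Coker(Im(d_D^{q-1}) \to Ker(d_D^q))$ (to kill the image-part of $d_D^{q-1}\circ h^q$), but the bookkeeping is noticeably heavier. Either way the argument is standard once the correct factorization has been set up.
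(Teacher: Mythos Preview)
The paper does not actually supply a proof of this proposition; it is stated as a standard fact and the text moves on immediately. Your argument is correct and is exactly the standard one: reduce to showing that a null-homotopic map induces zero on cohomology via additivity of $H^q$, then observe that $\varphi^q$ restricted to $Ker(d_C^q)$ lands in $Im(d_D^{q-1})$. Your remark that the conclusion should really be the \emph{equality} $H^q(f^\bullet)=H^q(g^\bullet)$ of morphisms, rather than merely an isomorphism, is well taken---the paper's ``$\cong$'' is sloppy notation for what is in fact an equality. The only thing worth tightening is part~(a): rather than asserting that $H^q$ is additive because its ingredients are ``functorial and additive'', it is cleaner to note that the restrictions $f^q_K$ and $f^q_I$ are obtained from $f^q$ by universal properties of kernels and cokernels, and these constructions manifestly respect sums of morphisms; the passage to the quotient then also respects sums by the universal property of the cokernel.
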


Now, we introduce exact functors. First, a (covariant) functor $F: \cal{C} \to \mathcal{C}'$ between abelian categories is \textit{additive} if the map that sends morphisms $f$ in $\mathcal{C}$ to morphisms $F(f)$ in $\mathcal{C}'$ is a homomorphism of groups. 

Then, given an exact sequence $0\to A \to B \to C \to 0$ in $\mathcal{C}$, we say $F$ is 
\begin{enumerate}
    \item \textit{exact} if $0\to F(A) \to F(B) \to F(C) \to 0$ is an exact sequence;
    \item \textit{left exact} if $0\to F(A) \to F(B) \to F(C)$ is an exact sequence;
    \item \textit{right exact} if $ F(A) \to F(B) \to F(C) \to 0$ is an exact sequence.
\end{enumerate}

Two important examples of left exact functors are $Hom_{\mathcal{C}}(-,A)$ (contravariant case) and $Hom_{\mathcal{C}}(A,-)$ (covariant case), for $A$ a fixed object of $\mathcal{C}$.

Now, remember that an object $I$ in an abelian category is \textit{injective} if for all morphism $\alpha: A \rightarrow I$ and all monomorphism $m : A \rightarrow B$, there is at least one morphism $\beta: B \rightarrow I$ such that $\alpha = \beta \circ m$  (equivalently, $I$ is injective if and only if the functor $Hom(-,I)$ is exact). A  \textit{resolution $A \rightarrow I^{\bullet}$ of an object} $A$ is an exact sequence $0 \rightarrow A \rightarrow I^0 \rightarrow I^1 \rightarrow ... $; this resolution is an \textit{injective resolution} if $I^i$ in injective for each $i \geq 0$. If an abelian category \textit{has enough injectives}, then any of its objects $A$ admits some injective resolution. Dually, we define projective objects and projective resolutions.

The concept of enough injectives is central in homological algebra because of the following theorem.

\begin{teo}\label{theo:derivedfunctor}
Let $\mathcal{C}$ and $\mathcal{C'}$ abelian categories, with $\mathcal{C}$ having enough injectives, and let $F: \mathcal{C} \rightarrow \mathcal{C'}$ be a (covariant) left exact additive functor. Then:
\begin{enumerate}
    \item[(i)] There are additive  functors $R^qF : \mathcal{C} \rightarrow \mathcal{C'} $ for all $q \geq 0$;
	    \item[(ii)] There is an isomorphism $F \cong R^0F$;
	    \item[(iii)] For each exact sequence $E: 0 \rightarrow A_1  \rightarrow A_2  \rightarrow A_3  \rightarrow 0 $ and each $q \geq 0$, there is a morphism $\delta^q_E: R^qFA_3 \rightarrow R^{q+1}FA_1$ that makes the following sequence exact
	    $$ \dots \rightarrow R^{q}FA_1 \rightarrow R^{q}FA_2 \rightarrow R^{q}FA_3 \xrightarrow{\delta^q_E} R^{q+1}FA_1 \rightarrow \dots$$
	    \item[(iv)] The morphisms $\delta^q_E$ are natural in $E$.
\end{enumerate}
\end{teo}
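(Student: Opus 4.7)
The plan is to construct the $R^qF$ via injective resolutions and then verify the four properties. For each object $A$ of $\mathcal{C}$, use the enough-injectives hypothesis to fix an injective resolution $0 \to A \to I_A^0 \to I_A^1 \to \cdots$; apply $F$ termwise to obtain a cochain complex $F(I_A^\bullet)$ in $\mathcal{C}'$ (the augmentation is dropped) and set $R^qF(A) := H^q(F(I_A^\bullet))$. To promote this assignment to a functor I would first establish the standard comparison lemma: given any morphism $f:A \to B$ and any injective resolutions $A \to I_A^\bullet$, $B \to I_B^\bullet$, the morphism $f$ lifts to a chain map $\widetilde f : I_A^\bullet \to I_B^\bullet$, and any two such lifts are cochain-homotopic. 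Both statements are proved by induction on the degree, using only the injectivity of each $I_B^q$ and the exactness of the source resolution to construct the required extensions. Since $F$ is additive it sends cochain homotopies to cochain homotopies, so Proposition~\ref{homotopic} guarantees that $R^qF(f) := H^q(F(\widetilde f))$ does not depend on the chosen lift. Functoriality, additivity, and independence (up to canonical isomorphism) of the choice of resolution all follow from this uniqueness up to homotopy.

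For (ii), the exact sequence $0 \to A \to I_A^0 \to I_A^1$ together with the left exactness of $F$ yields an exact sequence $0 \to F(A) \to F(I_A^0) \to F(I_A^1)$, so $R^0F(A) = H^0(F(I_A^\bullet)) = Ker(F(d^0)) \cong F(A)$, and one checks that this isomorphism is natural in $A$.

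For (iii), the central tool is the horseshoe lemma in the abelian setting: given a short exact sequence $E: 0 \to A_1 \to A_2 \to A_3 \to 0$ and previously chosen injective resolutions $I_1^\bullet$, $I_3^\bullet$ of $A_1$, $A_3$, one constructs an injective resolution $I_2^\bullet$ of $A_2$ with $I_2^q = I_1^q \oplus I_3^q$ fitting into a short exact sequence of complexes $0 \to I_1^\bullet \to I_2^\bullet \to I_3^\bullet \to 0$ that is split in each degree. The coboundaries of $I_2^\bullet$ are built inductively: at each stage one uses the injectivity of $I_1^{q+1}$ to lift the map $A_2 \to I_1^{q+1}$ needed to glue the two resolutions. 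Because the sequence splits degreewise and $F$ is additive, applying $F$ yields a degreewise split, hence short exact, sequence of complexes in $\mathcal{C}'$. The associated long exact sequence in cohomology, obtained by applying the snake lemma in $\mathcal{C}'$ (valid in any abelian category, as recalled in Section~\ref{sec:2}), produces the morphisms $\delta^q_E$ and the desired exactness.

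Naturality in $E$ (part (iv)) is established by taking a morphism of short exact sequences, lifting it componentwise to a morphism of horseshoe resolutions by the comparison lemma (any two such lifts being homotopic, hence yielding the same map on cohomology), and invoking the naturality of the snake-lemma connecting morphism. The principal technical obstacle is the horseshoe construction in the abstract abelian setting, since it requires careful use of injectivity of $I_1^{q+1}$ together with a pseudoelement or diagram-chasing argument (via Freyd--Mitchell or the pseudoelement calculus mentioned above) to verify that the resulting complex $I_2^\bullet$ is indeed exact and that $A_2 \to I_2^0$ is a monomorphism; the comparison lemma, though conceptually straightforward, is the other place where the injectivity hypothesis is doing all the work.
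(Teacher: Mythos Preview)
Your proposal is correct and follows the standard textbook route (comparison lemma for functoriality and well-definedness, left exactness for $R^0F\cong F$, horseshoe lemma plus the snake lemma for the long exact sequence, and the comparison lemma again for naturality of $\delta$). The paper, however, does not prove this theorem at all: it is stated in Section~\ref{sec:3} as one of the results that ``we state without proof'' and is simply accompanied by the remark that $R^qF(A)\cong H^q(F(I^\bullet))$ for an injective resolution $I^\bullet$ of $A$. So there is nothing in the paper to compare your argument against; what you have written is precisely the classical construction (as in Weibel or Grothendieck's T\^ohoku) that the paper takes for granted.

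One small imprecision worth tightening: in your description of the horseshoe step you write that one uses the injectivity of $I_1^{q+1}$ ``to lift the map $A_2 \to I_1^{q+1}$''. At the base step the map you actually need is $A_2 \to I_1^0$, obtained by extending $A_1 \to I_1^0$ along the mono $A_1 \hookrightarrow A_2$ via the injectivity of $I_1^0$; the inductive step is analogous with the appropriate cokernels in place of the $A_i$. This is only a wording issue and does not affect the validity of your argument.
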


These $R^qF : \mathcal{C} \rightarrow \mathcal{C'} $ functors are unique up to natural isomorphisms, they  are called \textit{$q$-th right derived functor of $F$} and $R^qF(A) \cong H^qF(I^{\bullet})$, where $I^{\bullet}$ is a resolution of $A$. 

It is worth to mention that the famous $Ext(-,A)$ functor is the derived functor of $Hom_{\mathcal{C}}(-,A)$. Since $Hom_{\mathcal{C}}(-,A)$ is exact iff $A$ is injective, and $Ext$ measures how far $Hom_{\mathcal{C}}(-,A)$ is from be an exact functor, we can say $Ext$ measures the failure of $A$ in being injective.

We introduce a last definition in this section: let $F: \mathcal{C} \rightarrow \mathcal{C'}$ as in the above theorem. An object $A$ of $\mathcal{C}$ is \textit{$F$-acyclic} (or acyclic for $F$) if $R^qF(A) = 0$ for all $q > 0$.

\textbf{Remark:} This definition also describes a way to measure the failure of a sequence to be exact, so we could define derived functors using acyclic objects instead of injective ones.

So far, we discussed that if $\mathcal{C}$ is an abelian category, then we can define \textit{cohomology objects} of its correspondent cochain complex $\mathcal{C}^{\bullet}$, and  several constructions and results of Homological Algebra are available. However, what is a \textit{cohomology theory}? That is, for different abelian categories (and even not abelian categories) what guarantees we are dealing with a cohomological structure? The answer is: the Eilenberg–Steenrod axioms. 

The Eilenberg–Steenrod axioms state that a collection of functors form a (co)homology theory if it satisfies a certain list of axioms, for fixed coefficients (we will see cohomologies where the coefficients are sheaves, but the reader can think, for instance, in the singular cohomology with coefficients in a fixed abelian group). Moreover, we may obtain other types of cohomology theories if we remove one of the axioms; in particular, the removal of the dimension axiom provides a ``generalized (co)homology theory'', which is the case, of some $K$-theories. In other words, ``cohomology'' has a broad application. It is interesting observe that different cohomologies may coincide for suitable choices of spaces and coefficients (see Theorem \ref{Cech-te}, and Sections \ref{sec:8} and \ref{sec:14}).

\subsection{Abelian Group Object}
\label{sec:4}
	If $\mathcal{C}$ is a category with binary products and terminal object $1,$ we can define the notion of \textbf{group object in $\mathcal{C}$} as an object $G$ in $\mathcal{C}$ equipped with morphisms 
\begin{center}
\begin{tikzcd}
e: 1 \arrow[r] & G & i: G \arrow[r] & G & m:G \times G \arrow[r] & G
\end{tikzcd}
\end{center}
in $\mathcal{C}$, such the following diagrams commute
\begin{center}
\begin{tikzcd}
G \times G \times G \arrow[r, "id_G\times m"] \arrow[d, "m \times id_G"'] & G \times G \arrow[d, "m"] & 1\times G \arrow[r, "e \times id_G"] \arrow[rd, "\cong"'] & G \times G \arrow[d, "m"] & G\times 1 \arrow[l, "id_G \times e"'] \arrow[ld, "\cong"] \\
G \times G \arrow[r, "m"'] & G &  & G & 
\end{tikzcd}

\begin{tikzcd}
G \arrow[d, "!"] \arrow[r, "\triangle"] & G \times G \arrow[r, "i\times id_G"] & G \times G \arrow[d, "m"] \\
1 \arrow[rr, "e"'] &  & G
\end{tikzcd}

\begin{tikzcd}
G \arrow[d, "!"] \arrow[r, "\triangle"] & G \times G \arrow[r, "id_G\times i"] & G \times G \arrow[d, "m"] \\
1 \arrow[rr, "e"'] &  & G
\end{tikzcd}

\end{center} 

The morphism $\triangle = (id_G,id_G): G \rightarrow G \times G$ is the diagonal morphism. Note that these diagrams are expressing the group axioms. If we want to add an abelian condition and form an abelian group object, then we must include 
\begin{center}
\begin{tikzcd}
G \times G \arrow[r, "\tau"] \arrow[rd, "m"'] & G \times G \arrow[d, "m"] \\
 & G
\end{tikzcd}
\end{center}
commutative, where $\tau = (\pi_2, \pi_1): G \times G \rightarrow G \times G $ is the twist morphism.

So an \textit{abelian group object} is a quadruple $(G,e,i,m),$ where the diagrams above commute, and the \textit{category} $Ab(\mathcal{C})$ \textit{of abelian groups object} in $\mathcal{C}$ is the category defined over the base category where the objects are abelian groups objects in $\cal{C}$ and the morphisms are morphisms in $\mathcal{C}$ that commute with the corresponding morphisms $e, i$, and $m.$ In more details, if $\mathcal{G} =(G,e,i,m)$ and $\mathcal{G}'=(G',e',i', m')$ are abelian group objects in $\mathcal{C}$, then an arrow $h : G \to G'$ in $\mathcal{C}$ determines an arrow $h : \cal G \to \cal G'$ in $Ab(\mathcal{C})$ iff $h\circ e = e'$, $h \circ i = i' \circ h$ and $h \circ m = m' \circ (h \times h)$.

Since this internal notion of group uses only products and commutative diagrams in the category $\mathcal{C}$, it follows easily that the forgetful functor $E: Ab(\mathcal{C}) \to \mathcal{C}$ creates limits.

Two notable examples of group objects are topological groups, when $\mathcal{C}$ is the category of topological spaces, and Lie groups, when $\mathcal{C}$ is the category of smooth manifolds. The base category $\mathcal{C}$ will be a topos throughout this survey. 

\section{Sheaves}
\label{sec:5}

Interested in fixed points results applied to the realm of partial differential equations, Jean Leray published in 1945, while a prisoner in the 2nd World War, the paper \cite{leray1945forme} that would originate sheaf theory. He published a more refined paper about sheaf theory and spectral sequences in 1950 \cite{leray1950anneau}, with the original ideas preserved. Meanwhile, Henri Cartan starts the Séminaire at the École Normale Supérieure, and reformulates sheaf theory. Also in 1950, in the third year of this seminar, sheaves appear as what is now know as ``étalé spaces''. Results using sheaf methods were increasingly showing up, but the terminology was not established. It was Roger Godement who achieve a standard language for the theory (for example, presheaves are functors, sheaves are a special kind of presheaves; the notion of sheaf in Cartan's seminars pass to be nominated an étalé space) with his book published in 1958 \cite{godement1958topologie}. 

Less about the history and more about the philosophy of sheaf theory: since the beginning, there was some notion that allows pass local data to global data. In the work of Godement, the flabby sheaves were responsible to play this role, while Grothendieck worked more with injective sheaves. The idea is that the cohomology groups obtained from resolutions of this specific kind of sheaves are trivial, so we do not have obstructions from local to global. The power of sheaf theory is to provide machinery to solve global problems by resolving them locally, which is especially interesting for Algebraic Geometry and Complex Analysis.

Let $X$ be a topological space. We denote by $\mathcal{O}(X)$ the category associated to the poset  of all open sets of $X$. A \textit{presheaf of sets} is a (covariant) functor $F: \mathcal{O}(X)^{op} \rightarrow Set$, and a morphism of presheaves is a natural transformation. Given inclusions $U \subseteq V$, we use $s_{|^V_U}$ (or just $s_{|_U}$) to denote the ``restriction map'' from $F(V)$ to $F(U)$. 

If $U \subseteq X$ is open and $U = \bigcup\limits_{i\in I} U_i$ is an open cover, a presheaf $F$ is a \textit{sheaf} (of sets) when we have the following diagram
\begin{center}
     \begin{tikzcd}
F (U) \arrow[r, "e"] & \prod\limits_{i\in I}F (U_i) \arrow[r, "p", shift left=1 ex] 
\arrow[r, "q"', shift right=0.5 ex]  & {\prod\limits_{(i,j) \in I \times I}F (U_i \cap U_j)}
\end{tikzcd}
 \end{center}
 
 is an equalizer in the category $Set$, where:
 \begin{enumerate}
     \item $e(t) = \{t_{|_{U_i}} \enspace | \enspace i \in I\}, \enspace t \in F (U)$ 
     \item     $p((t_k)_{ k \in I}) = (t_{i_{|_{U_i \cap U_j}}})_{(i,j)\in I\times I}$ \\ $q((t_k)_{k \in I}) = (t_{j_{|_{U_i \cap U_j}}})_{(i,j)\in I\times I}, \enspace (t_k)_{k \in I} \in \prod\limits_{k\in I}F (U_k)$
 \end{enumerate}\label{sheaf}

This definition is useful to understand categorical properties and provide a simple way to visualize its generalization when we substitute $\mathcal{O}(X)$ by an arbitrary category. However, there is an equivalent and more concrete form to describe a sheaf. Instead of presenting an equalizer diagram, we say that the preasheaf $F$ satisfies two conditions: 
\begin{enumerate}
    \item \textbf{(Gluing)} If $s_i \in F(U_i)$  is a \textit{compatible family}, i.e., $s_{i_{|_{U_i \cap U_j}}} = s_{j_{|_{U_i \cap U_j}}}$ for all $i,j \in I$, there is some  $s \in F(U)$ such that $s_{|_{U_i}} = s_i, i \in I$. We say $s$ is the \textit{gluing} of the compatible family.
    \item \textbf{(Separability)} Given $s, s' \in F(U)$ such that $s_{|_{U_i}} = s'_{|_{U_i}},$ for all $i \in I$, $s = s'$.
\end{enumerate}

A morphism of sheaves is a morphism of presheaves, that is, a natural transformation between functors, and it is clear that this defines a category, denoted by $Sh(X)$. Note that in the definition of sheaves we could replace $Set$ by any category with all small products, for example, the category of abelian groups $Ab$, and in this case we change the nomenclature to \textit{abelian sheaves}\label{absheaves}. We will return to this in Section \ref{sec:6}.

If $F$ is a presheaf, the \textit{stalk of $F$ at the point $x \in X$} is the direct limit $F_x := \varinjlim\limits_{U \in \mathcal{U}_x} F(U)$, where $\mathcal{U}_x = \{U \in {\cal O}(X): x \in U\}$ is the poset of open neighborhoods of $x$. A presheaf $F$ satisfies the separability condition above if and only if the canonical morphisms $F(U) \to \prod\limits_{x \in U}  F_x$, $U \in {\cal{O}}(X)$ are monomorphisms. We will see in the next paragraphs that stalks are important to transform presheaves into sheaves.

Now we can say that sheaves capture global information from the gluing of local properties. For example, given an open subspace $U$ of a topological space $X$, and an open cover $U = \bigcup\limits_{i\in I} U_i$, there is a functor,  ${\cal C}_\mathds{R}$, that takes opens $U$ in $X$ and sends to the set ${\cal C}_\mathds{R}(U)= \{f: U \rightarrow \mathds{R} \,|\, f$ is a continuous function$\}$. Since the restriction of a continuous function to a subset of its domain is still a continuous function,  ${\cal C}_{\mathds{R}}$ is a presheaf. Since $f_i(x) = f_j(x), \forall x \in U_i \cap U_j,$ there is a unique function $f$ such that $f_{|_{U_i}} = f_i$. Besides that, the continuity of the $f_i$'s implies  the continuity of the gluing $f$, so $f \in {\cal C}_{\mathds{R}}(U)$. Analogously, the presheaves of differential, smooth, or analytic functions are sheaves \cite{tennison_1975}.

This example may remind the reader of germs and stalks over points in a topological space with respect to étale bundles (local homeomorphisms) and this is not only a coincidence: for any continuous function $p: E \rightarrow X$ we define $\Gamma_p(U) = \{s: U \rightarrow E \enspace| s$ is continuous and $\enspace p(s(x)) = x, \forall x \in U\}$ and is possible to prove that $\Gamma_p$ is a sheaf, called \textit{sheaf of sections of the continuous function $p$}. Moreover, if $F$ is sheaf over a topological space $X$, taking $E_F :=  \coprod\limits_{x\in X}F_x$ the disjoint union of stalks of $F$ for each point $x$ in $X$, and  defining an adequate topology in $E_F$, the (obvious) projection function  $p_F : E_F \rightarrow X$ determines a local homeomorphism: this leads to a natural isomorphism between $F$ and $\Gamma(p_F)$. So every sheaf over $X$ is (naturally isomorphic to) the sheaf of sections of a local homeomorphism over $X$. Sheaf Theory inherits the nomenclature of constructions involving étale bundles because the two notions are strongly related through the category equivalence between the category of étale bundles over $X$ and the category of sheaves over $X$, for each topological space $X$. The reader can find a detailed account on this subject in \cite[Chap. II]{maclane1992sheaves}.

The spatial-functorial identification process described above is useful to provide the ``best sheaf approximation of a given presheaf'' as follows: any presheaf $F : {\cal O}(X)^{op} \to Set$, can be ``sheafificated'' into $a(F) := \Gamma(p_F) : {\cal O}(X)^{op} \to Set$ above $F$, i.e.  $a(F)$ is a sheaf over $X$ and there is a natural transformation $\eta_F : F \to a(F)$ that is initial among the natural transformations $\sigma : F \to S$, where $S$ is a sheaf over $X$; moreover, the stalk  of $a(F)$ at a point $x \in X$ is isomorphic to the stalk $F_x$. For instance, given  a set $A$, the ``constant  presheaf'' with value $A$ is the contravariant functor   $F_A(U \hookrightarrow V) = (A \overset{id_A}\leftarrow A)$; its stalk at a point $x \in X$ is isomorphic to $A$ and its sheafification, $a(F_A) : {\cal O}(X)^{op} \to Set$, is isomorphic to the sheaf of continuous function with value $A$ (viewed as a discrete topological space): ${\cal C}_{A}(U) = \{ f : U \to A \, | \, f$ is a continuous function$\}$, $U \in {\cal O}(X)$. 

Another relevant example of sheaf came from Commutative Algebra and it is central for the development of modern Algebraic Geometry: for each commutative unitary ring $R$, there is a canonical sheaf, ${\cal O}_R$, of rings defined over its prime spectrum space\footnote{$Spec(R) = \{ p \subseteq R: p$ is a proper prime ideal of $R\}$, and it is endowed with the so called ``Zariski Topology''.}, $Spec(R)$, this sheaf is determined on a (canonical) basis of the (spectral) topology of $Spec(R)$ just taking adequate localizations of the ring $R$; the stalk of this sheaf at a proper prime ideal $p \in Spec(R)$ is isomorphic to the local ring $R_p = R[R\setminus p]^{-1} $. The pair $(Spec(R), {\cal O}_R)$ is called the affine scheme associated to $R$; we will return to this example later, in Section \ref{sec:8}.

\subsection{Sheaf Cohomology}
\label{sec:6}

In this section, we present the subject ``Sheaf Cohomology'' in the usual way, omitting proofs that can be easily found in the literature, as in \cite{grothendieck1971revetement,godement1958topologie}, but providing intuition about the associated ideas. Our aim here is to list some results of this theory that will reappear in the next section with the appropriate modifications. 

For the reader's convenience, we start explaining why we can do sheaf cohomology in $Ab(Sh(X))$, i.e., how abelian sheaves are equivalent to abelian groups objects of $Sh(X)$

 Note that abelian presheaves $\mathcal{O}(X)^{op} \rightarrow Ab$   form the category of functors $Ab^{\mathcal{O}(X)^{op}}$. Then, for every functor $F$ that is an object in $Ab^{\mathcal{O}(X)^{op}}$, we have that $F(U)$ is an abelian group for every $U \in \mathcal{O}(X)$. So, for each $U \in {\cal{O}}(X)$, there are $m_U: (F \times F)(U) \cong F(U)\times F(U) \rightarrow F(U)$, $i_U:F(U) \rightarrow F(U)$, and $e_U: 1 \rightarrow F(U)$ such that they determine natural transformations and  the diagrammatic rules of abelian group object holds, i.e., $F$ is an abelian group object of $Set^{\mathcal{O}(X)^{op}}$.  On the other hand, if $G \in Ab(Set^{\mathcal{O}(X)^{op}})$, then $G \in Set^{\mathcal{O}(X)^{op}}$ and we have $m, i,$ and $e$ as in the definition of an abelian group object. For every $U \in \mathcal{O}(X)$ we consider $m_U, i_U,$ and $e_U$ such that the diagrammatic rules still hold, then, $G(U)$ is an abelian group, i.e., $G$ is a functor of $\mathcal{O}(X)^{op}$ to $Ab$. These correspondences  describe  an equivalence of categories  $Ab(Set^{\mathcal{O}(X)^{op}}) \simeq Ab^{\mathcal{O}(X)^{op}}.$ 

Observe that $Ab(Set) \simeq Ab$ and consider $E: Ab(Set) \rightarrow Set$ the forgetful functor ($E$ ``forgets'' the group operations); note that this functor preserves all limits. Thus an abelian sheaf is a functor $F: \mathcal{O}(X)^{op} \rightarrow Ab$ where the composition $\mathcal{O}(X)^{op} \rightarrow  Ab \rightarrow Sets$ is a sheaf of sets. Denote the category of abelian sheaves by $Sh_{Ab}(X)$. Since we have inclusions $Sh(X) \rightarrow Set^{\mathcal{O}(X)^{op}}$ and $Sh_{Ab}(X) \rightarrow Ab^{\mathcal{O}(X)^{op}}$, the equivalence $Ab(Set^{\mathcal{O}(X)^{op}}) \simeq Ab^{\mathcal{O}(X)^{op}}$ induces an equivalence $Ab(Sh(X)) \simeq Sh_{Ab}(X)$, since the subcategories of sheaves, over $Set$ and over $Ab$, are closed under all small limits.

Therefore, to apply cohomological techniques in $Ab(Sh(X))$ is equivalent to apply it in $Sh_{Ab}(X)$. Many classical books of Sheaf Cohomology prove that $Sh_{Ab}(X)$ is an abelian category (see, for instance, \cite[Theorem 2.5]{iversen1986cohomology}). We, alternatively,  can show that $Ab(\mathcal{E})$ is an abelian category for any topos $\mathcal{E}$ so, in particular, $Ab(Sh(X))$ is abelian. We will comment more on this in Section  \ref{sec:12}.

We will use right derived functors to define the cohomology group of sheaves, thus we need to ensure that $Sh_{Ab}(X)$ has enough injectives: see \cite[Theorem 3.1]{iversen1986cohomology} for a proof of this fact.

For every sheaf $F$ in $Sh_{Ab}(X)$ and $U$ open set of $X$, we have the abelian group of \textit{sections of $F$ over $U$} defined by $\Gamma(U,F)=F(U)$. Sections over $X$ are called \textit{global sections}, and $\Gamma(X,-):Sh_{Ab}(X) \rightarrow Ab$ is a left exact functor\footnote{It preserves all small limits.} that sends an abelian sheaf to its global section abelian group, know as \textit{global section functor}. 

Then  the $q$-group cohomology group of $X$ with coefficients in $F$  is, by definition, the $q$-th right derived functor of $\Gamma(X,F)$. In other words, given an injective resolution $F  \rightarrow I^{\bullet}$, we have $H^q(X,F) = R^q\Gamma (X,I^{\bullet})$.

A special type of sheaves are the flabby sheaves. As we will see, they are important because, like injective objects, they allow the construction of acyclic resolutions. By definition, if the restriction maps $s_U: \mathcal{F}(X) \rightarrow \mathcal{F}(U)$ is onto for every $U \subseteq X$ open, the sheaf $\mathcal{F}$ is flabby. Equivalently, $\mathcal{F}$ is flabby if $\mathcal{F}(V) \rightarrow \mathcal{F}(U)$  is onto for any pair $U \subseteq V$ of open sets in $X.$

\begin{prop}
Every injective sheaf is flabby.
\end{prop}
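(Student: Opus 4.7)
The plan is to exhibit, for every open subset $U \subseteq X$, an abelian sheaf $\mathbb{Z}_U$ that represents the functor ``sections over $U$'' on $Sh_{Ab}(X)$, i.e.\ such that there is a natural isomorphism
\[
\mathrm{Hom}_{Sh_{Ab}(X)}(\mathbb{Z}_U,\mathcal{F}) \;\cong\; \mathcal{F}(U),
\]
for every abelian sheaf $\mathcal{F}$. Once this is available, the standard argument for injectives reduces flabbiness to a monomorphism between representing objects.

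Concretely, I would first construct $\mathbb{Z}_U$ as the sheafification of the abelian presheaf $P_U$ defined by $P_U(W) = \mathbb{Z}$ when $W \subseteq U$ and $P_U(W) = 0$ otherwise, with restriction maps being identities when both opens lie inside $U$ and zero otherwise. Using the universal property of sheafification together with the definition of $P_U$, I would establish the representability isomorphism: a morphism $\mathbb{Z}_U \to \mathcal{F}$ corresponds to a presheaf morphism $P_U \to \mathcal{F}$, which in turn corresponds to the image in $\mathcal{F}(U)$ of $1 \in \mathbb{Z} = P_U(U)$.

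Next I would observe that for $U \subseteq V$ there is a canonical morphism $\iota_{UV} : \mathbb{Z}_U \to \mathbb{Z}_V$, induced from the evident presheaf inclusion $P_U \hookrightarrow P_V$. This morphism is a monomorphism in $Sh_{Ab}(X)$: monomorphisms of abelian sheaves can be checked stalkwise, and the stalks of $\mathbb{Z}_U$ are $\mathbb{Z}$ at points of $U$ and $0$ at points outside $U$, so $\iota_{UV}$ is stalkwise the identity on $U$ and the zero map (out of $0$) elsewhere — injective in either case.

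Now assume $\mathcal{F}$ is injective. Applying $\mathrm{Hom}(-,\mathcal{F})$ to the monomorphism $\iota_{UV}:\mathbb{Z}_U \hookrightarrow \mathbb{Z}_V$ yields a surjection
\[
\mathrm{Hom}(\mathbb{Z}_V,\mathcal{F}) \twoheadrightarrow \mathrm{Hom}(\mathbb{Z}_U,\mathcal{F}),
\]
because injectivity of $\mathcal{F}$ is exactly the statement that $\mathrm{Hom}(-,\mathcal{F})$ takes monomorphisms to surjections. Under the representability isomorphism this map is precisely the restriction $\mathcal{F}(V) \to \mathcal{F}(U)$, which is therefore surjective; taking $V = X$ gives flabbiness. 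The main technical point to execute carefully is the representability isomorphism $\mathrm{Hom}(\mathbb{Z}_U,\mathcal{F}) \cong \mathcal{F}(U)$ and the compatibility of $\iota_{UV}$ with restriction under this isomorphism; the rest is formal.
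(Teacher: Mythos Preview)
Your argument is correct and is in fact one of the two standard proofs of this result. The paper, however, takes the other standard route: it embeds the injective sheaf $F$ into the Godement sheaf $D(F) = \prod_{x\in X} x_* F_x$ (a product of skyscraper sheaves), observes that $D(F)$ is flabby directly from its description, and then uses injectivity of $F$ to split the monomorphism $F \hookrightarrow D(F)$; flabbiness of $F$ is read off from the resulting retraction and a naturality square for restriction maps.

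Your approach is arguably cleaner and more conceptual: it isolates exactly the categorical content (the monomorphism $\mathbb{Z}_U \hookrightarrow \mathbb{Z}_V$ representing restriction), and it generalizes verbatim to any setting where sections over an object are representable and the comparison maps are monic. The paper's approach, on the other hand, has the side benefit of introducing the Godement construction, which is reused immediately afterward to produce flabby resolutions of arbitrary sheaves --- so the extra machinery is not wasted. One small point worth making explicit in your write-up is the naturality check: that precomposition with $\iota_{UV}$ really does correspond to the restriction map $\mathcal{F}(V)\to\mathcal{F}(U)$ under your identifications; you flag this yourself, and it is indeed the only place where a reader might want to see one more line.
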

\begin{proof} To establish this result, we will need an auxiliary construction. 

Consider a functor $x_*: Set \rightarrow Sh(X)$, such that
  \[
    (x_*H)(U)= 
\begin{cases}
    H, & x \in U\\
    \{*\},              &  x \notin U 
\end{cases}
\]
where $H$ is set, $U$ an open set in $X$, and \{*\} unitary set. This is known as the \textit{skyscraper sheaf}. In the abelian sheaf version, we have $x_*: Ab \rightarrow Sh_{Ab}(X),$ $H \mapsto (x_*H)(U),$ with the difference $H$ is now an abelian group and $x_*H$ is a functor that sends open sets of $X$ to $H$ or in the trivial group.

For each $x \in X$, let $D_x$ be an injective abelian group. We define an injective sheaf $D := \prod\limits_{x \in X} x_{*}D_{x}$. It is not difficult to see that $D(X) \rightarrow D(U)$ is surjective, i.e, $D$ is flabby.

Now suppose $F$ is an injective sheaf. We will show that $F$ is flabby. Since $F$ is injective, for each $x \in X$, the stalk $F_x$ is an injective abelian group. Consider the family of injective abelian groups $D(F)_x := F_x, x \in X$. Then $D(F) := \prod\limits_{x \in X} x_{*}D(F)_{x}$ is an injective and flabby sheaf and, since $F(U) \to \prod\limits_{x \in U} F_{x}$ is a monomorphism, $U \in {\cal O}(X)$, there is a mono $i: F \rightarrow D(F)$. Since $F$ is an injective sheaf, we can select a morphism $f: D(F) \rightarrow F$ such that $f \circ i = id_F$. Since all components of identity morphism are surjective homomorphism, the same holds for the components of $f$.
Besides that, by naturality of $f$, the following diagram commutes:
\begin{center}
    \begin{tikzcd}
D(X) \arrow[r, "{s_{U,D}}"] \arrow[d, "f(X)"'] & D(U) \arrow[d, "f(U)"] \\
F(X) \arrow[r, "{s_{U,F}}"'] & F(U)
\end{tikzcd}
\end{center}

We already know $f(U)$ and ${s_{U,D}}$ are surjectives, so $f(U)\circ {s_{U,D}}$ is surjective. By commutative of the diagram, ${s_{U,F}}\circ f(X)$ is surjective and so also is ${s_{U,F}}$. This holds for every open set $U$ of $X$, then $F$ is a flabby sheaf. 
\end{proof}

Now we show that flabby sheaves can build acyclic resolutions.

\begin{prop}
If $F$ is an flabby sheaf, then $H^q(X,F) = 0,$ for all $q > 0$. In other words, $F$ is $\Gamma(X,-)$-acyclic.
\end{prop}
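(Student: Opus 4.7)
The plan is to proceed by induction on $q$ using dimension shifting. Since $Sh_{Ab}(X)$ has enough injectives, embed $F$ into an injective sheaf $I$ (which is flabby by the previous proposition), obtaining a short exact sequence
$$0 \to F \to I \to G \to 0$$
with $G$ the cokernel sheaf. The associated long exact cohomology sequence, together with the vanishing $H^q(X, I) = 0$ for $q > 0$, will reduce everything to two key auxiliary lemmas about flabby sheaves.

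The first (and main) auxiliary fact I would need is: \emph{if $0 \to F' \to F \to F'' \to 0$ is short exact in $Sh_{Ab}(X)$ and $F'$ is flabby, then $0 \to F'(U) \to F(U) \to F''(U) \to 0$ is exact for every open $U \subseteq X$}. Only surjectivity on the right is nontrivial. Given $s \in F''(U)$, at the level of stalks one can lift $s$ locally to some $t_i \in F(U_i)$ over an open cover $\{U_i\}$ of $U$. The differences $t_i - t_j$ on $U_i \cap U_j$ lift to sections of $F'$, and one shows by a Zorn's lemma argument on the poset of pairs $(V, t)$ with $V \subseteq U$ open and $t \in F(V)$ mapping to $s|_V$ that the flabbiness of $F'$ allows these local lifts to be glued to a global lift in $F(U)$. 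This is the main technical obstacle.

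The second auxiliary fact I would use is: \emph{if in a short exact sequence $0 \to F' \to F \to F'' \to 0$ both $F'$ and $F$ are flabby, then so is $F''$}. This follows from the first lemma applied to the restrictions to any open $U \subseteq X$, together with a diagram chase comparing sections on $X$ and on $U$: surjectivity of $F(X) \to F(U)$ and of $I(U) \to F''(U)$ force surjectivity of $F''(X) \to F''(U)$.

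With these in hand, the argument finishes by induction on $q$. For $q = 1$, the relevant segment of the long exact sequence reads
$$I(X) \to G(X) \to H^1(X, F) \to H^1(X, I) = 0,$$
and the first lemma applied to $U = X$ gives surjectivity of $I(X) \to G(X)$, so $H^1(X, F) = 0$. For $q \geq 2$, the segment
$$0 = H^{q-1}(X, I) \to H^{q-1}(X, G) \to H^q(X, F) \to H^q(X, I) = 0$$
yields $H^q(X, F) \cong H^{q-1}(X, G)$. Since $G$ is flabby by the second lemma (both $F$ and $I$ being flabby), the inductive hypothesis gives $H^{q-1}(X, G) = 0$, completing the proof.
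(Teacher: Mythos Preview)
Your proposal is correct and follows essentially the same route as the paper: embed $F$ into an injective (hence flabby) sheaf, invoke the Zorn's-lemma lemma that a short exact sequence with flabby first term stays exact on sections, read off $H^1(X,F)=0$ from the long exact sequence, and finish by induction using the ``two-out-of-three'' property for flabbiness. The paper is in fact terser than you are---it cites the Zorn's-lemma step to Iversen and states the flabbiness of the quotient without proof---so your outline is, if anything, slightly more detailed.
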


\begin{proof}Since $F$ is flabby, we can construct $0 \rightarrow F \xrightarrow{f} G \xrightarrow{g} Q \rightarrow 0$ an exact sequence, where $G$ is injective because $Sh_{Ab}(X)$ has enough injectives. By the proposition above, $G$ is flabby.

Using the left exactness of the global section functor, we immediately obtain the exact sequence  $$0 \rightarrow \Gamma(X,F) \xrightarrow{\Gamma_f} \Gamma(X,G) \xrightarrow{\Gamma_g} \Gamma(X,Q) $$

The flabby condition of $F$ implies more:  $$0 \rightarrow \Gamma(X,F) \xrightarrow{\Gamma_f} \Gamma(X,G) \xrightarrow{\Gamma_g} \Gamma(X,Q) \rightarrow 0 $$ is exact. This is not straightforward and uses Zorn's Lemma to be proved \cite[Theorem 3.5]{iversen1986cohomology}.

By Theorem \ref{theo:derivedfunctor}, the derived functors induce a long exact sequence. We will analyze the following part of the sequence: 

\begin{center}
    $\Gamma(X,G) \xrightarrow{\Gamma_{g_0}} \Gamma(X,Q) \xrightarrow{\delta_0} H^1(X,F) \xrightarrow{f_1} H^1(X,G)$ 
\end{center}

Where $g_0 = g$. Note $H^1(X,G)=0,$ because $G$ is injective. Since the sequence above is exact, by the Isomorphism Theorem:  $$H^1(X,F) \cong \frac{\Gamma(X,Q)}{Ker(g_0)}$$ But $g_0 = g$ is a surjective morphism, so $Ker(\delta) = Im(g_0) \cong \Gamma(X,Q).$ Then, $H^1(X,F) = 0$.

To conclude the result, use an induction argument in $q$ and the fact that if the first two objects in a short exact sequence are flabby, the third one is also flabby. 
\end{proof}

\textbf{Remark:} All proofs we know of this proposition require Zorn Lemma, so a constructive proof may not be available yet (or maybe there is not a constructive proof). 

Given the fact that every sheaf admits a flabby resolution, via Godement resolution, the Proposition above implies we can define cohomology groups with coefficient in $\mathcal{F}$ using flabby sheaves instead of injective ones. The reason why this is possible is that we need a procedure that measures the ``failure of its right exactness'' to construct cohomology, and the proposition above guarantees such procedure for flabby sheaves \cite{godement1958topologie}.

\subsection{\v{C}ech Cohomology}
\label{sec:7}

The nerve construction of an open covering first appeared in \cite{alexandroff1928allgemeinen}, before its debut in Sheaf Theory. Originally, the nerve associated an open covering of a topological space to an abstract simplicial complex, in an algorithmic form. Currently, nerve constructions preserve the algorithmic form but they deal with more general settings than topological spaces and simplicial complexes. We will use the \v{C}ech nerve to develop \v{C}ech Cohomology.

Godement improved in his book the brief discussion about \v{C}ech Cohomology made in Cartan's seminars, and it is a fundamental reference on the subject  until today. Additionally, we recommend Kozsul's note classes \cite{koszul1957faisceaux} and, for references in  English, there are algebraic geometry books  as \cite{hartshorne1977algebraic}. Here we introduce \v{C}ech Cohomology as a technique to calculate Sheaf Cohomology by taking open covers of a fixed topological space, construct a cochain complex from it, and finally compute the cohomology groups. We aim to use this section to compare it with \v{C}ech Cohomology for Grothendieck Toposes.

Fix $F$ in $Sh_{Ab}(X)$ and consider ${\cal U} = (U_i )_{i \in I}$ an open cover of $X$,  where $I$ is a  set of indices. For each $q \in \mathds{N}$, denote the $U_{{i_0},...,{i_q}} = U_{i_0} \cap ... \cap U_{i_q}$ for $i_0, ..., i_q \in I$ (this is the \v{C}ech nerve). The \textit{\v{C}ech cochain complex} is $$C^q({\cal U}, F) = \prod\limits_{i_0,...,i_q}F(U_{{i_0},...,{i_q}}),  \forall q \geq 0,$$ and its coboundary morphisms $d^q : C^{q}({\cal U},F) \to C^{q+1}({\cal U},F)$ are $$(d^q\alpha) = \sum\limits_{k=0}^{q+1}(-1)^k\alpha(\delta_k)_{\big|_{U_{{i_0},...,{i_{q+1}}}}}$$ where $\delta_k$ is used to indicate that we are removing $i_k$, i.e., $\alpha(\delta_k) = \alpha_{i_0,...,\widehat{i_k},...,i_{q+1}}$.

A straightforward verification shows that $d^{q+1}\circ d^q = 0$ so, indeed, this is a cochain complex and we can define the $q$-th \v{C}ech cohomology group of $F$ with respect to the covering ${\cal U}$ by $\check{\mathrm{H}}^q({\cal U},F) = Ker (d^{q})/Im (d^{q-1})$.

The result below gives us a first clue that \v{C}ech cohomology can be useful to calculate cohomology of sheaves. See \cite[Lemma III 4.4]{hartshorne1977algebraic} for a proof.

\begin{prop} \label{Cech-U}
Let $F$ be a sheaf in $Sh_{Ab}(X)$, and ${\cal U} = (U_i)_{i \in I}$ a covering of $X$. There is a canonical  morphism $k^q_{\cal U} : \check{\mathrm{H}}^q({\cal U},F) \rightarrow \mathrm{H}^q(X,F)$ natural and functorial in $F$ for each $q \in \mathds{N}.$ 
\end{prop}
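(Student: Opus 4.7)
The plan is to exploit an auxiliary sheafified Čech complex that interpolates between $F$ and any injective resolution of $F$. Given the open cover $\mathcal{U} = (U_i)_{i \in I}$, for each tuple $(i_0, \ldots, i_q)$ let $j_{i_0 \ldots i_q} : U_{i_0 \ldots i_q} \hookrightarrow X$ denote the inclusion and define the abelian sheaf
$$\mathcal{C}^q(\mathcal{U}, F) \;=\; \prod_{(i_0, \ldots, i_q)} (j_{i_0 \ldots i_q})_*\bigl(F|_{U_{i_0 \ldots i_q}}\bigr).$$
Equip $\mathcal{C}^\bullet(\mathcal{U}, F)$ with the same alternating-sum coboundary used to define $C^\bullet(\mathcal{U}, F)$, and adjoin the canonical augmentation $\varepsilon : F \to \mathcal{C}^0(\mathcal{U}, F)$ sending $s \in F(V)$ to the tuple of restrictions $(s|_{V \cap U_i})_{i \in I}$. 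By construction, applying $\Gamma(X, -)$ to $\mathcal{C}^q(\mathcal{U}, F)$ returns precisely the Čech group $C^q(\mathcal{U}, F)$.

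The first key step is to verify that the augmented sequence
$$0 \to F \xrightarrow{\varepsilon} \mathcal{C}^0(\mathcal{U}, F) \to \mathcal{C}^1(\mathcal{U}, F) \to \cdots$$
is exact as a sequence of abelian sheaves on $X$. Since exactness is a stalk-local condition, the argument reduces to a point $x \in X$: choose some index $i_* \in I$ with $x \in U_{i_*}$ and construct a contracting chain homotopy by inserting $i_*$ as a new initial index. This is the main technical obstacle of the proof, but it is the standard acyclicity argument for the Čech complex, reducing to simplicial identities together with the fact that the restriction to $U_{i_*}$ is canonically available on stalks at $x$.

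Once this resolution is in hand, I would appeal to the abelian category machinery of Subsection \ref{sec:3}. Since $Sh_{Ab}(X)$ has enough injectives, fix an injective resolution $F \to I^\bullet$. Because $\mathcal{C}^\bullet(\mathcal{U}, F)$ is an augmented resolution of $F$ and $I^\bullet$ is an injective resolution, the comparison lemma of homological algebra produces a morphism of complexes $\phi^\bullet : \mathcal{C}^\bullet(\mathcal{U}, F) \to I^\bullet$ extending $\mathrm{id}_F$, and any two such lifts are chain-homotopic. Applying the left exact functor $\Gamma(X, -)$ to $\phi^\bullet$ gives a morphism of cochain complexes $C^\bullet(\mathcal{U}, F) \to \Gamma(X, I^\bullet)$, and taking cohomology in degree $q$ defines
$$k^q_{\mathcal{U}} \;:\; \check{\mathrm{H}}^q(\mathcal{U}, F) \longrightarrow \mathrm{H}^q\bigl(\Gamma(X, I^\bullet)\bigr) \;=\; \mathrm{H}^q(X, F).$$

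Finally, naturality and functoriality in $F$ follow formally. The sheafified Čech construction $\mathcal{C}^\bullet(\mathcal{U}, -)$ is manifestly a functor in $F$, and a morphism $F \to F'$ induces compatible morphisms between the chosen injective resolutions together with a lift of the corresponding square that is unique up to chain homotopy. Hence by Proposition \ref{homotopic} the induced maps on cohomology are well defined and functorial, independent both of the injective resolution and of the particular lift $\phi^\bullet$ chosen; the comparison with $\mathrm{H}^0(X, F) = \Gamma(X, F)$ in degree zero is immediate from the sheaf condition, confirming that $k^0_{\mathcal{U}}$ is the canonical isomorphism.
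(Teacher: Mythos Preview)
Your proposal is correct and follows essentially the same route as the reference the paper cites (Hartshorne, Lemma III.4.4): build the sheafified \v{C}ech resolution $\mathcal{C}^\bullet(\mathcal{U},F)$ of $F$, verify its exactness stalkwise via the ``insert a fixed index'' contracting homotopy, then use the comparison lemma against an injective resolution and apply $\Gamma(X,-)$. The naturality and independence-of-choices arguments you outline are exactly the standard ones, so there is nothing to add.
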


Next, we will briefly examine the behavior of the \v{C}ech cohomology groups under the dynamic of refinements of coverings. We will return to this point later, in Section \ref{sec:13}.

Let ${\cal{V}} = (V_j)_{j \in J}$ be another covering of $X$. Suppose that ${\cal{U}}$ is a refinement of ${\cal{V}}$, i.e, for each $i \in I$, there is $j \in J$ such that $U_i \subseteq V_j$. Choose any function $c : I \to J$ such that $U_i \subseteq V_{c(i)}, i \in I$; then there is a induced morphism of cochain complexes $m_c: C^\bullet({\cal{V}}, F) \to  C^\bullet({\cal{U}}, F)$ and a corresponding morphism of \v{C}ech cohomology groups w.r.t. the coverings ${\cal U}$ and ${\cal V}$, $\check{m}_c : \check{\mathrm{H}}^\bullet({\cal V},F)  \to \check{\mathrm{H}}^\bullet({\cal U},F) $. Moreover, if $d : I \to J$ is another chosen function w.r.t. the refinement of ${\cal V}$ by ${\cal U}$, then the induced morphisms of complexes $m_c, m_d$ are homotopic, thus, by Proposition \ref{homotopic}, there is a unique induced morphism of cohomology groups $\check{m}_{{\cal U}, {\cal V}} : \check{\mathrm{H}}^\bullet({\cal V},F)  \to \check{\mathrm{H}}^\bullet({\cal U},F) $. 

Note that the class $Ref(X)$ of all coverings of $X$ is partially ordered under the refinement relation; this is a directed ordering relation.

The construction above is functorial in the following sense: 
\begin{itemize}
    \item $\check{m}_{{\cal U}, {\cal U}} = id : \check{\mathrm{H}}^\bullet({\cal U},F)  \to \check{\mathrm{H}}^\bullet({\cal U},F)$;
    \item  If ${\cal W} = (W_k)_{k \in K}$ is a covering of $X$ such that ${\cal V}$ is a refinement of ${\cal W}$, then $\check{m}_{{\cal U}, {\cal W}} = \check{m}_{{\cal U}, {\cal V}}  \circ \check{m}_{{\cal V}, {\cal W}} : \check{\mathrm{H}}^\bullet({\cal W},F)  \to \check{\mathrm{H}}^\bullet({\cal U},F)  $. 
\end{itemize}

The (absolute) \v{C}ech cohomology group is, by definition, the directed (co)limit\footnote{This (co)limit has to be taken with some set-theoretical care, we  will not  detail this point here.} $$\check{\mathrm{H}}^\bullet(X,F) :=   \varinjlim\limits_{{\cal U} \in Ref(X)}\check{\mathrm{H}}^\bullet({\cal U},F) .$$

The main result concerning \v{C}ech cohomology is the following:

\begin{teo} \label{Cech-te}

The canonical morphisms  $k^q_{\cal U} : \check{\mathrm{H}}^q({\cal U},F) \rightarrow \mathrm{H}^q(X,F), q \in \mathds{N},$ according notation in Proposition \ref{Cech-U}, are compatible under refinement. Moreover, the induced morphism on colimit 

$$k^q : \check{\mathrm{H}}^q(X,F) \to \mathrm{H}^q(X,F), q \in \mathds{N}, $$

is an isomorphism if $q  \leq 1$ and a monomorphism if $q=2$.

\end{teo}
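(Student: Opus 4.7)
The plan is to define the comparison maps $k^q_\mathcal{U}$ uniformly via a double complex, deduce their compatibility with refinements from naturality, and extract the low-degree statements from a spectral sequence. Fix an injective resolution $F \to I^\bullet$ in $Sh_{Ab}(X)$, available because this category has enough injectives. For each cover $\mathcal{U}$, form the first-quadrant bicomplex $K^{p,q}_\mathcal{U} := C^p(\mathcal{U}, I^q)$ with \v{C}ech differentials horizontally and resolution differentials vertically. Every injective sheaf is flabby (preceding proposition) and flabby sheaves have vanishing higher \v{C}ech cohomology with respect to any cover; thus the augmentation $\Gamma(X, I^\bullet) \to \operatorname{Tot}(K^{\bullet,\bullet}_\mathcal{U})$ is a quasi-isomorphism and $\operatorname{Tot}(K^{\bullet,\bullet}_\mathcal{U})$ computes $\mathrm{H}^\bullet(X, F)$. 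The canonical map $k^q_\mathcal{U}$ of Proposition \ref{Cech-U} is then realized as the edge morphism $\check{\mathrm{H}}^q(\mathcal{U}, F) \to \mathrm{H}^q(X, F)$ of the spectral sequence obtained by filtering $\operatorname{Tot}$ by the $p$-degree. A refinement $\mathcal{V} \to \mathcal{U}$ induces a morphism of bicomplexes whose different choices of refinement function are cochain-homotopic, so by Proposition \ref{homotopic} the $k^q_\mathcal{U}$ are compatible under refinement and induce a well-defined $k^q$ on the directed colimit. Degree zero is then immediate: by the sheaf axiom $\check{\mathrm{H}}^0(\mathcal{U}, F) = F(X) = \mathrm{H}^0(X, F)$ for every $\mathcal{U}$, all refinement transition maps are identities, and $k^0$ is the identity.

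For degrees $1$ and $2$, compute the above spectral sequence explicitly. Since restrictions of flabby sheaves are flabby, $I^\bullet|_{U_\alpha}$ is a $\Gamma$-acyclic resolution of $F|_{U_\alpha}$ and therefore computes $\mathrm{H}^\bullet(U_\alpha, F|_{U_\alpha})$ for each \v{C}ech index $\alpha$, giving $E_1^{p,q} = C^p(\mathcal{U}, \underline{\mathrm{H}}^q(F))$, where $\underline{\mathrm{H}}^q(F)$ denotes the presheaf $U \mapsto \mathrm{H}^q(U, F|_U)$. Hence $E_2^{p,q} = \check{\mathrm{H}}^p(\mathcal{U}, \underline{\mathrm{H}}^q(F)) \Rightarrow \mathrm{H}^{p+q}(X, F)$, and after passing to the directed colimit over refinements (exact in $Ab$) this becomes
$$E_2^{p,q} = \check{\mathrm{H}}^p(X, \underline{\mathrm{H}}^q(F)) \;\Longrightarrow\; \mathrm{H}^{p+q}(X, F).$$
The associated five-term exact sequence is
$$0 \to \check{\mathrm{H}}^1(X, F) \to \mathrm{H}^1(X, F) \to \check{\mathrm{H}}^0(X, \underline{\mathrm{H}}^1(F)) \to \check{\mathrm{H}}^2(X, F) \to \mathrm{H}^2(X, F),$$
so the desired isomorphism at $q=1$ and monomorphism at $q=2$ follow once $\check{\mathrm{H}}^0(X, \underline{\mathrm{H}}^1(F)) = 0$ is established. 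Since filtered colimits commute with finite limits, the stalk of $\underline{\mathrm{H}}^1(F)$ at $x$ equals $\mathrm{H}^1(I^\bullet_x)$, which vanishes because $I^\bullet_x$ is exact in positive degrees. Thus the associated sheaf of $\underline{\mathrm{H}}^1(F)$ is zero; and since $\check{\mathrm{H}}^0(X, P)$ equals the global sections of the plus-construction of $P$, which is separated and therefore injects into the sheafification, the vanishing of the sheafification forces $\check{\mathrm{H}}^0(X, \underline{\mathrm{H}}^1(F)) = 0$.

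The main technical obstacle is the double-complex bookkeeping: verifying that $\operatorname{Tot}(K^{\bullet,\bullet}_\mathcal{U})$ computes $\mathrm{H}^\bullet(X, F)$ requires the flabby \v{C}ech-acyclicity $\check{\mathrm{H}}^p(\mathcal{U}, G) = 0$ for $p \geq 1$ and $G$ flabby, whose proof (like the preceding $\Gamma$-acyclicity of flabby sheaves) rests on Zorn's Lemma; and extracting the spectral sequence of the colimit demands care about commuting filtered colimits with spectral sequence convergence, which is the reason the authors elsewhere prefer to omit spectral-sequence arguments. The concluding stalk calculation yielding $\check{\mathrm{H}}^0(X, \underline{\mathrm{H}}^1(F)) = 0$ is, by contrast, essentially routine.
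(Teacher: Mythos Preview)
The paper does not actually supply a proof of Theorem~\ref{Cech-te}; it is stated as a result and immediately followed by the remark about paracompact Hausdorff spaces. The only hint the paper gives about method comes much later, in Section~\ref{sec:13}, where the Grothendieck-topos analogue is also stated without proof and the authors write that ``the proofs for both results above use spectral sequences and can be found at \cite[Chap 8]{johnstone77topostheory}.''

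Your argument is the standard Cartan--Leray spectral sequence proof and is correct. The double complex $C^p(\mathcal{U},I^q)$, the identification of the $E_2$-page as $\check{\mathrm{H}}^p(\mathcal{U},\underline{\mathrm{H}}^q(F))$, the passage to the colimit, and the extraction of the five-term exact sequence are all sound; the key vanishing $\check{\mathrm{H}}^0(X,\underline{\mathrm{H}}^1(F))=0$ via the plus-construction and the stalkwise exactness of $I^\bullet$ is exactly the right way to close the argument. Your justification that flabby sheaves have vanishing higher \v{C}ech cohomology (needed for the row-exactness of the augmented bicomplex) is a genuine ingredient: it follows because the sheafified \v{C}ech complex of a flabby sheaf is a flabby resolution, and then one invokes the $\Gamma$-acyclicity of flabby sheaves already proved in the paper. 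So your approach is precisely the spectral-sequence route the paper gestures at but declines to spell out, and you have filled in the details the survey deliberately omits.
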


Far more interesting, under reasonable geometrical hypothesis on the topological space $X$ (for instance, if $X$ is a Hausdorff paracompact space\footnote{This holds for any CW-complex or any topological manifold.}), then  the canonical morphisms $k^q$ are isomorphisms for all $q \geq 0.$

\subsection{Applications}
\label{sec:8}

Most of mathematicians will not be interested in abstract sheaf theory alone, but in its applications for specific sheaves. For example, if $(X,\mathcal{O}_X)$ is a ringed space, i.e., $X$ is a topological space and $\mathcal{O}_X$ is a ring-valued sheaf,  we can define a coherent sheaf $F$ on $(X,\mathcal{O}_X)$ that will look like a vector bundle with the advantage of forming an abelian category. Thus, we can study coherent sheaf cohomology. In this context, we have an analog of Poincaré Duality of Algebraic Topology, and the Serre Duality, that relates cohomology groups at level $n-q$ with $Ext$ groups at level $q$, where $n$ is the dimension of the particular scheme we are studying, by \cite[Theorem III 7.6]{hartshorne1977algebraic}). Coherent sheaf cohomology also provides a characterization of Euler Characteristic by an alternating sum of the dimension of cohomology groups of a scheme with coefficient in a coherent sheaf. 

We observe that schemes are essential in modern Algebraic Geometry, and its definition arises from the affine (locally) ringed space $(Spec(R),\mathcal{O}_R)$. We use Zariski Topology to construct the sheaf $\mathcal{O}_R$ and furnish the spectrum $Spec(R)$ of a commutative ring with a topological structure. The gluing of ringed spaces of the form $(Spec(R),\mathcal{O}_R)$ results in the notion of schemes. We may use schemes to construct quasi-coherent sheaves, a generalizatin of coherent sheaves introduced by Serre in \cite{10.2307/1969915}. Quasi-coherent sheaves constitutes an interesting class of coefficients for cohomologies in Algebraic Geometry: \v{C}ech and sheaf cohomology agree on a noetherian separated scheme with the Zariski topology, for any quasi-coherent sheaf as coefficient.

Another application of sheaf-theoretical methods is the relation between \v{C}ech cohomology and De Rham cohomology which is obtained as follows: Given a topological space $X$, and a set $A$,  the constant presheaf with values in  $A$ that we mentioned earlier can be transformed into a constant sheaf with values in $A$ by a standard ``sheafification” process.
In particular, the set $A$ can be the underlying set of an abelian group such as $\mathds{R}$, the additive group of real numbers, and the topological space can be a compact manifold $M$ of dimension $m$ and class at least ${\cal C}^{m+1}$. In this case, there is an isomorphism $H^q_{dR}(M) \cong \check{H}^q(M,\mathds{R})$, for all $q \leq m$, where $H^q_{dR}$ denotes the de Rham cohomology groups \cite[Appendix]{petersen2006riemannian}. Similarly, \v{C}ech cohomology and singular cohomology  coincide for any topological space $X$ that is homotopically equivalent to a CW-complex, with the constant sheaf of an abelian group $A$ as coefficient.

More recently, sheaf and \v{C}ech cohomologies have been used in quantum mechanics because of the general idea of measuring the obstruction between local and global properties. For example, in \cite{abramsky2015contextuality}, the \v{C}ech cohomology groups are defined for specifics topological spaces, with a corresponding open cover, and show they identify the obstructions that characterize logical forms of \textit{contextuality}.

In the next section, we will generalize the categories of sheaves over some topological space defining the notion of Grothendieck topos and exhibit specific Grothendieck topos that appears in other areas of Mathematics.

\section{Toposes}
\label{sec:9}

\subsection{Grothedieck Toposes}
\label{sec:10}

Cohomology groups often provide good invariants to classify objects: if two Riemann surfaces (with some additional conditions) agree in each level of the cohomology groups, then they are the same from a topological point of view. In the 1950s, this problem was well understood for algebraic curves over the field of complex numbers, but not much was known for algebraic curves over other fields. In 1954, Jean-Pierre Serre introduced sheaf theory in Algebraic Geometry with coherent sheaves \cite{10.2307/1969915}, and one year later, in \cite{serre1956geometrie}, he showed that with coherent sheaves in hand there are cases such that the cohomology groups of complex and non-complex algebraic varieties coincide, by using the Zariski topology.

However, in most cases, the Zariski topology does not have ``enough'' open sets. So, motivated to prove the Weil's Conjectures, A. Grothendieck had the idea of stop trying to find open sets, in the usual sense, and defined an analogous version of inclusion of open sets using more general morphisms in small categories. This gave birth to Grothendieck topologies and to Grothendieck toposes, particularly, the étale topos of a scheme X - the category of all étale sheaves on a scheme X - and so to Étale Cohomology. A. Grothendieck, M. Artin, and J-L. Verdier proved three of the four Weil's Conjectures, and the remaining one was proved by Deligne in 1974 \cite{deligne1974conjecture}. The main references to see the development of this program aiming the proof of Weil's Conjectures passes through Bourbaki seminars \cite{grothendieck1962fondements}, ``Eléments de Geométrie Algébrique'' \cite{PMIHES_1960__4__5_0,PMIHES_1961__8__5_0,PMIHES_1961__11__5_0,PMIHES_1963__17__5_0,PMIHES_1964__20__5_0,PMIHES_1965__24__5_0,PMIHES_1966__28__5_0,PMIHES_1967__32__5_0}, and ``Séminaire de Géométrie Algébrique'' (SGA). We highlight SGA4 \cite{grothendieck1972topos}, as the one dedicated to topos theory and étale cohomology.

Now, remember that a \textit{locale} $(L,\leq)$ is a complete lattice such that 
    \begin{center}
\begin{center}
      $a \wedge (\bigvee\limits_{i \in I} b_i) = \bigvee\limits_{i \in I}(a \wedge b_i)$, $ \forall a, b_i \in L$.
     \end{center}
     \end{center}
The poset of all open sets of a topological space $X$ is a locale. Locales coincide with complete Heyting algebras\footnote{The class of all  Heyting algebras provides the  natural algebraic semantics for the intuitionistic propositional logic, that is the ``constructive fragment'' of the classical propositional logic.}. 

Note that in the definition of a sheaf over a topological space  we did not use the points of the space, that is, only their locale structure was necessary. In fact, we can define sheaves for a presheaf $F: {\mathcal{L}}^{op} \rightarrow Set$, where $\mathcal{L}$ is the category associated to a locale $L$, since it is a poset. This is one simple case where the notion of sheaf is still available in a category different from $\mathcal{O}(X)$. There are others? Yes, by introducing an abstract idea of open cover we can define sheaves for any small category $\mathcal{C}$.

First, we will be a bit less general. Suppose $\mathcal{C}$ is a small category with finite limits (or just with pullbacks). A Grothendieck pretopology on $\mathcal{C}$ associates to each object $U$ of $\mathcal{C}$ a set $P(U)$ of families of morphisms $\{U_i \rightarrow U\}_{i \in I}$ satisfying some simple  rules. They are:
\begin{enumerate}
\item The singleton family $\{U' \xrightarrow{f} U \}$, formed by an isomorphism $f : U' \overset{\cong}\to U$,  is in $P(U)$;

\item If $\{U_i \xrightarrow{f_i} U\}_{i \in I}$ is in $P(U)$ and $\{V_{ij} \xrightarrow{g_{ij}} U_i \}_{j \in J_i}$ is in $P(U_i)$ for all $i \in I$, then $\{V_{ij} \xrightarrow{f_i \circ g_{ij}} U \}_{i \in I, j \in J_i}$ is in $P(U)$; 
\item If $\{U_i \rightarrow U\}_{i \in I}$ is in $P(U)$, and $V \rightarrow U$ is any  morphism in $\mathcal{C}$, then the family of pullbacks $\{V \times_U U \rightarrow V \}$ is in $P(V)$.

\end{enumerate}
 
The families in $P(U)$ are called \textit{covering families} of $U$.\\

\textbf{Example:}  Note that the ``concrete'' notion of cover of topological spaces provides an example of Grotendieck pretopology: an object in $\mathcal{O}(X)$ is an open set $U$ in $X$ and  the morphisms in $\mathcal{O}(X)$ are inclusions of open subsets of $X$, this category has all finite limits (given by finite intersection of open subsets).  Thus is natural to define a Grothendieck pretopology $P$ in $\mathcal{O}(X)$ by
$$\{U_i \overset{f_i}\hookrightarrow U\}_{i \in I} \in P(U) \iff U = \bigcup\limits_{i \in I} U_i.$$ This can be carried out analogously for any locale ${\cal L}$.

We say that the presheaf $F: \mathcal{C}^{op} \rightarrow Set$ is a sheaf for the Grothendieck pretopology $P$ if the following diagram is an equalizer in $Set$:  

             \begin{center}
              \begin{tikzcd}
            F (U) \arrow[r] & \prod\limits_{i \in I} F (U_i) \arrow[r, shift left=1 ex] 
            \arrow[r, shift right=0.5 ex]  & \prod\limits_{(i,j)\in I\times I} F (U_i \times_U U_j)
            \end{tikzcd}
            \end{center} \label{grothsheaf}
            
However, different pretopologies can provide the same class of sheaves. For instance, if $\bigcup_{i \in I} U_i = U$ is an open cover of the open subset $U \subseteq X$, for any $V \subseteq U_j$, for some $j \in I$, we have $ V \cup \bigcup_{i \in I} U_i = U. $ To remove this ambiguity from the above definition we use the  notion of  {\em covering sieve}. 

Let $C$ be an object in a small category $\mathcal{C}$ (the assumption of existence of the pullbacks over ${\cal C}$ can be dropped now), a \textit{sieve} on $C$ is a collection $S$ of morphisms $f$ with codomain $C$ such that $f \circ g \in S,$ for all morphism $g$ with $dom(f) = cod(g)$. Given $h: D \rightarrow C$, define $$h^{\ast}(S) = \{ g \mid cod(g) = D, \ h\circ g \in S\}$$ The $h^{\ast}(S)$ will assume the role of a pullback in the category, as we see bellow: 

A \textit{Grothendieck Topology} in $\mathcal{C}$  associates each object $C$ of  $\mathcal{C}$ to a collection $J(C)$ of sieves on $C$ such that:
			\begin{enumerate}
				\item The maximal sieve on $C$, $\{f \mid cod(f) = C\}$, is in $J(C)$;
				\item If $R$ and $S$ are sieves on $C$, $S$  is in $J(C)$ and  $h^{\ast}(R)$ is in $J(D)$  for all $h: D \rightarrow C$ in $S$, then $R$ is in $J(C)$;
				\item If $S$ is in $J(C)$, then $h^{\ast}(S)$ is in $J(D)$ for all $h: D \rightarrow C$.
		    \end{enumerate}	
		    
	The collection of sieves in $J(C)$ are the \textit{covering sieves} (or $J$-covers). The pair $(\mathcal{C},J)$ formed by a small category $\mathcal{C}$ and a Grothendieck Topology $J$ is called \textit{site}. 	Each pretopology $P$ on a category with pullbacks ${\mathcal{C}}$ determines a least  Grothendieck topology $J_P$ on $\mathcal{C}$: a covering sieve $S \in J_P(U)$ is a sieve on the object $U$ that contains some family in $P(U)$. 
	
	We can also define sheaves for the Grothendieck topology $J$, but more concepts would be introduced and we can be satisfied with what we have since both definitions - for Grothendieck topologies and pretopologies - are equivalent\cite{johnstone77topostheory}. In particular, if ${\mathcal{C}}$ is a small  category with pullbacks, a presheaf $F : \mathcal{C}^{op} \to Set$ is a sheaf for the pretopology $P$ iff it is a sheaf for the induced topology $J_P$. Morphisms of sheaves are natural transformations, and so we obtain $Sh(\mathcal{C},J),$ the category of sheaves over this site.
			 

    Finally, a \textit{Grothendieck Topos} is a category that is equivalent to $Sh(\mathcal{C},J)$, for some site. Note $Sh(X) = Sh(\mathcal{C},J_P)$ is a Grothendieck topos where $\mathcal{C} = \mathcal{O}(X)$ and $J_P$ is the Grothendieck topology generated by the pretopology $P$ described in the example above (that pretopology is not a topology).
    
   Grothendieck toposes also are characterized by purely categorical axioms, by Giraud’s Theorem \cite[Theorem 0.45]{johnstone77topostheory}. If a category has some specific properties, it is a Grothendieck topos. Conversely,  every Grothendieck topos satisfies these same properties. 
   
   We provide below a list of properties we will need in Section \ref{sec:12} to sketch the proof that $Ab(\mathcal{E})$ is $AB5$ and has generators:

\begin{lem}\label{giraud}
	A Grothendieck topos $\mathcal{E}$ satisfies the following conditions:
		\begin{enumerate}
			\item all colimits are universal (i.e, preserved by pullback);
			\item has all small coproducts;
			\item has a set of generators (i.e., exists a small family $\{G_i\}_{i \in I}$ of objects in $\mathcal{E}$ where given distinct morphisms $f,g: X \rightarrow Y$ in $\mathcal{E}$ there are $i \in I$ and $h: G_i \rightarrow X$ such that $f \circ h \neq g \circ h$);
			\item filtered colimits commute with finite limits.

		\end{enumerate}
\end{lem}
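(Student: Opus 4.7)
The plan is to fix a site $(\mathcal{C}, J)$ with $\mathcal{E} \simeq Sh(\mathcal{C}, J)$ and transfer each of the four properties from the presheaf topos $Set^{\mathcal{C}^{op}}$ via the sheafification adjunction $a \dashv i$, where $i : Sh(\mathcal{C}, J) \hookrightarrow Set^{\mathcal{C}^{op}}$ is the full inclusion and $a$ is its left adjoint. Two features of this adjunction will be used repeatedly: (a) $a$ preserves finite limits, so $Sh(\mathcal{C}, J)$ is closed in $Set^{\mathcal{C}^{op}}$ under finite limits and these coincide with the pointwise ones in $Set$; (b) $a$, being a left adjoint, preserves all colimits, so a colimit in $Sh(\mathcal{C}, J)$ is obtained as $a$ applied to the pointwise colimit in $Set^{\mathcal{C}^{op}}$. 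Since every property in the lemma holds in $Set$ by classical arguments, it also holds pointwise in $Set^{\mathcal{C}^{op}}$.

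For (2), small coproducts in $Sh(\mathcal{C}, J)$ exist as $a$ applied to the pointwise disjoint unions in $Set^{\mathcal{C}^{op}}$. For (3), the Yoneda embedding yields a small family $\{y(C)\}_{C \in \mathrm{Ob}(\mathcal{C})}$ of generators in $Set^{\mathcal{C}^{op}}$: if $f \neq g : X \to Y$ are presheaf morphisms, they differ at some component $C$, so Yoneda produces $h : y(C) \to X$ with $f \circ h \neq g \circ h$; transposing along the adjunction gives a small separating family $\{a(y(C))\}$ in $Sh(\mathcal{C}, J)$ via the unit $\eta_{y(C)}$, because $i$ is fully faithful. For (4), filtered colimits commute with finite limits in $Set$ by a classical calculation, and the commutation transfers pointwise to $Set^{\mathcal{C}^{op}}$; a finite limit in $Sh(\mathcal{C}, J)$ agrees with the presheaf-level one, the filtered colimit in $Sh(\mathcal{C}, J)$ is $a$ applied to the presheaf filtered colimit, and left-exactness of $a$ then concludes.

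The main obstacle is (1), universality of colimits, since sheaf colimits are not pointwise and therefore interact less transparently with pullbacks. The plan is: given a small diagram $D : \mathcal{I} \to Sh(\mathcal{C}, J)$ with colimit $L = a(\mathrm{colim}\, (i \circ D))$ and a morphism $f : Y \to L$, first compute each pullback $Y \times_L D(i)$ in $Sh(\mathcal{C}, J)$ as the corresponding pullback in $Set^{\mathcal{C}^{op}}$ (which is legitimate by closure under finite limits), next invoke universality of pointwise colimits in $Set^{\mathcal{C}^{op}}$ to identify $\mathrm{colim}_i (i(Y) \times_{i(L)} i(D(i)))$ with a pullback of the presheaf colimit along $i(f)$, and finally apply $a$ to transfer the conclusion to $Sh(\mathcal{C}, J)$, using that $a$ preserves the pullback and that the canonical comparison map $a(\mathrm{colim}\, (i \circ D)) \to L$ is an isomorphism. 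The delicate step is reconciling the sheaf colimit $L$ with its presheaf counterpart through the unit $\eta : \mathrm{colim}\, (i \circ D) \to i(L)$ when forming these pullbacks; this is precisely where left-exactness of $a$ becomes essential, since it converts the presheaf-level universality into the sheaf-level statement after reflection.
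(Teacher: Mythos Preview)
The paper does not actually prove this lemma: items (1)--(3) are simply attributed to Giraud's theorem (with a reference to Johnstone's \emph{Topos Theory}), and item (4) is asserted to follow ``not immediately'' from the corresponding fact in $Set$. So there is no proof to compare against in the usual sense.

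Your proposal, by contrast, is a genuine direct argument via the reflection $a \dashv i$, and it is essentially correct. Items (2), (3), (4) are handled cleanly. For (1), your sketch is right in spirit but glosses over the one nontrivial point you yourself flag as ``delicate'': the presheaf-level universality of colimits applies to pullbacks along maps into the \emph{presheaf} colimit $P := \mathrm{colim}^{\mathrm{presh}}(i \circ D)$, not into $i(L)$. To make the identification you want, factor each cocone leg $i(D(j)) \to i(L)$ through the unit $\eta : P \to i(L)$, use the pullback pasting lemma to write $i(Y) \times_{i(L)} i(D(j)) \cong (i(Y) \times_{i(L)} P) \times_P i(D(j))$, apply presheaf universality along the map $i(Y) \times_{i(L)} P \to P$ to get $\mathrm{colim}_j^{\mathrm{presh}}\bigl(i(Y) \times_{i(L)} i(D(j))\bigr) \cong i(Y) \times_{i(L)} P$, and then hit everything with $a$ (using $a(P) = L$ and $a i \cong \mathrm{id}$). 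With that step made explicit, your argument for (1) goes through.

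What your approach buys over the paper's deferral to Giraud is self-containment: you never need the full strength (or the nontrivial proof) of the Giraud characterization, only the elementary and well-known properties of the sheafification adjunction.
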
 

In this list, only the last property is not part of Giraud’s Theorem, but we will use it and it follows, not immediately, from the fact the same holds for $Set$.

\subsection{Elementary Toposes}
\label{sec:11}

We add here a short section on a generalization of Grothendieck topos: the categorical concept of ``elementary topos'', introduced by Lawvere and Tierney in the early 1970s. The relatively simple axioms that defines an elementary topos allows a description of an internal language and an internal (intuitionistic) logic: this machinery is useful to perform ``high level arguments'', for instance to  provide a simple proof that the category of abelian group objects in an elementary topos is an abelian category (Theorem \ref{abE-th}). This ``high-level'' method was successfully explored in Algebraic Geometric \cite{blechschmidt2018using}, revealing its potential to other applications in Mathematics;  first steps towards high-level Homological Algebra were given in \cite{blechschmidt2018flabby}.

An \textit{elementary topos} is a (locally small) category that is cartesian closed, has a subobject classifier, and has all finite limits (or equivalently, has all finite products and equalizers, or has pullbacks and a terminal object). 

A category is cartesian closed if it has binary products and it is possible to define an exponential object for every two objects as follows: given $B$ and $C$ objects, there is an \textit{exponential object} $C^B$ endowed with an \textit{evaluation map} $ev: C^B \times B \rightarrow C$ such that for any other object $A$, endowed with an arrow $f: A \times B \rightarrow C$, there is a unique morphism $\Bar{f}: A \rightarrow C^B$ where $ev \circ (\Bar{f}\times id_B) = f$. An important property that arises from this definition is the isomorphism $\phi_{AC}^B: Hom(A \times B, C) \overset\cong\to Hom(A, C^B)$, which are natural in $A$ and $C$.

A subobject classifier of a locally small category that has all finite limits, and $1$ as terminal object consists of an object $\Omega$ of \textit{truth values} and a \textit{truth morphism}\footnote{In fact, these data are unique up to unique isomorphisms.} $t: 1 \rightarrow \Omega$ such that given any object $E$, and any ``subobject'' \begin{tikzcd}
r: U \arrow[r, tail] & E
\end{tikzcd}, there is a unique morphism \begin{tikzcd}
\chi_r: E \arrow[r] & \Omega
\end{tikzcd} that makes the following diagram a pullback:
 \begin{center}
    \begin{tikzcd}
    U \arrow[d, "r"', tail] \arrow[r, "!"] & 1 \arrow[d, "t"] \\
    E \arrow[r, "\chi_r"'] & \Omega
    \end{tikzcd}
    \end{center}

This \begin{tikzcd}
\chi_r: E \arrow[r] & \Omega
\end{tikzcd} is called \textit{characteristic morphism of $r$}. It can look too abstract, but when the category is $Set$, we have $\Omega = \{0,1\}$ and, for each subset $U$ of a fixed set $E$, the morphism $\chi_U$ is the well known characteristic function. In fact, $Set$ is an example of elementary topos \cite[Example 5.2.1]{borceux1994handbook3}. More generally, 
every Grothendieck topos is an elementary topos \cite[Example 5.2.9]{borceux1994handbook3}. 

Any elementary topos ${\cal E}$ enjoys some categorical properties that holds in the category $Set$, e.g.: a morphism in ${\cal E}$ is an isomorphism iff it a monomorphism and an epimorphism; every epimorphism in ${\cal E}$ is a coequalizer; any morphism in ${\cal E}$ has a (unique up to unique isomorphism) factorization through the image - it is composition of a monomorphism with an epimorphism. 

An important type of morphism between toposes $f: \mathcal{F} \rightarrow \mathcal{E}$ is called \textit{geometric morphism}. It consists of a pair of functors, $f_* : \mathcal{F} \rightarrow \mathcal{E}$,  the \textit{direct image}, and $f^*: \mathcal{E} \rightarrow \mathcal{F}$, \textit{the inverse image}, such that:
\begin{enumerate}
    \item $f^*$ is left adjoint of $f_*$;
    \item $f^*$ preserves finite limits, i.e, it is left exact.
\end{enumerate}

The reader does not need to know the definition of adjoint pair of functors to understand the ideas covered in this survey and can think in adjointness as an abstraction of free constructions in Algebra; the sheafification process is an instance of adjointness; the ``exponencial convertion'', described by the natural isomorphisms $\phi^{B}_{AC}$ above shows that the functor $(-) \times B$ is the left adjoint of the functor $( - )^B$. We recommend \cite{zbMATH01216133} if there is an interest to better understand the  proofs in which we explicitly use the adjoint property of geometric morphisms.

In general, each side of an adjoint pair of functors determines the other side, up to isomorphism, so it is clear that $Set$ is the terminal Grothendieck topos, concerning the geometric morphisms. This motivates the definition of point in a topos ${\cal E}$; it is a geometric morphism $f : Set \to {\cal E}$.

Another distinguished geometric morphism is  $i = (i_*, i^*) : Sh({\cal C},J) \to Set^{{\cal C}^{op}}$, here the direct image part is just the (full) inclusion $i_* : Sh({\cal C},J) \hookrightarrow Set^{{\cal C}^{op}}$ and the inverse image part is the sheafification functor, $i^* :  Set^{{\cal C}^{op}} \to  Sh({\cal C},J)$.

Every topos naturally encodes a ``local set theory'' \cite{bell1988topos}. Indeed, each topos has an internal language, known as \textit{Mitchell-Bénabou language}, and a canonical \textit{interpretation} - a procedure to give a meaning for the symbols introduced in the canonical language. In the next section, we will use these notions to proof Theorem \ref{abE-th}.

Provide the complete definition of the internal language of a topos and its respective interpretation would spend about tree pages of this survey so we have restricted ourselves to only present a general idea. We hope this approach helps to understand the rigorous definitions given in \cite[Chap 6]{borceux1994handbook3}.

Given a topos $\cal E$, the  Mitchell-B\'enabou language $L(\cal{E})$ consists of three parts:
\begin{center}
   $\bullet$ Sorts (or types)  $\enspace\enspace\enspace\enspace\enspace\bullet$ Terms  $\enspace\enspace\enspace\enspace\enspace\bullet$ Formulas 
\end{center}
For each object $A$ in ${\cal E}$, there is an associated sort $s_A$ (they are distinct from each other). The terms $\tau$ of $L({\cal E})$ have a value sort $s(\tau)$ and are inductively defined from the basic terms by applying certain natural constructors - the basic terms of sort $s_A$ are the constants of value sort $s_A$ that corresponds to a morphism  $1 \to A$ in ${\cal E}$ and an enumerable set of variables $\{x^A_i: i \in \mathds{N}\}$ of sort $s_A$; more complex terms, $t:s_B$, are inductively constructed from simpler terms $t_0:s_{A_0}, \cdots, t_{n-1}:s_{A_{n-1}}$ by a  formal application of morphisms $t = f(t_0, \cdots, t_{n-1})$, where $f : A_0 \times \cdots \times A_{n-1} \to B$ is an arrow in ${\cal E}$. Formulas are inductively constructed from the basic (or atomic) formulas by applying (firs-order and higher-order) logical constructors - an atomic formulas is defined ``to abbreviate relations between terms''.  As a simple  example of (atomic) formula we have $\tau =_A \sigma$, where $\tau$ and $\sigma$ are terms with same value sort $s_A$.

For the canonical interpretation of the language $L({\cal E})$ in the topos ${\cal E}$, the main idea is establish a \textit{realization} for each term, and a \textit{truth table} for each formula, as follows:
Let $\tau$ be a term of type $s_A$ with variables $x_1,...,x_n$ of types $s_{X_1},...,s_{X_n}$, respectively. A \textit{realization} of $\tau$ is an arrow in ${\cal E}$, written $[\tau]: X_1 \times ... \times X_n \rightarrow A $.

Now, given a formula $\varphi$ with (free) variables $x_1,...,x_n$ of types $s_{X_1},...,s_{X_n}$, a \textit{truth table} of $\varphi$ is an arrow in ${\cal E}$, $[\varphi]: X_1 \times ... \times X_n \rightarrow \Omega $, where $\Omega$ is the subobject classifier of ${\cal E}$. 

Next, we exemplify the abstract ideas presented above
\begin{enumerate}
    \item[(i)] Consider $x$ a variable of type $s_A$. Then its realization is established to be the identity morphism $$[x]  \overset{\mathclap{\strut\text{def}}}= id_A : A \to A$$
    \item[(ii)] Previously, we mentioned that $\tau=_A \sigma$, where $s(\tau) = s(\sigma) = s_A$, is a formula. Continuing this, we establish that its truth table is the morphism $$X_1 \times ... \times X_n \xrightarrow{([\tau], [\sigma]))} A \times A \xrightarrow{\delta_A} \Omega$$ where the free variables in $\tau$ and $\sigma$ have types among $s_{X_1},..., s_{X_n}$ and $\delta_A$ is the characteristic morphism of $\triangle_A \overset{\mathclap{\strut\text{def}}}= (id_A,id_A) : A \rightarrow A \times A$, the diagonal morphism.  
\end{enumerate}

In this setting, a formula $\varphi$ can be valid or not. We say $\varphi$ is \textit{valid} if the canonical  interpretation $$X_1 \times .... \times X_n \xrightarrow{!} 1 \xrightarrow{t} \Omega$$ is the truth table of $\varphi$, where $x_1,...,x_n$ are free variables of types $s_{X_1},...,s_{X_n}$. To denote that $\varphi$ is \textit{valid} we use  ${\cal E} \models \varphi$.

As a simple example, we will show that $\mathcal{E} \models x =_A x$, where $x$ is a variable of type $s_A$; in other words, the formula $x =_A x$ is valid in $\mathcal{E}$. By the discussion above, we know that the truth table of $x =_A x$ is $A \xrightarrow{([x], [x]))} A \times A \xrightarrow{\delta_A} \Omega$. Since $[x] = id_A: A \rightarrow A$, the morphism $([x],[x])$ is precisely the diagonal morphism $\triangle_A$. By definition, $\delta_A$ is the characteristic morphism of $\triangle_A$. Thus, the following diagram is a pullback
\begin{center}
    \begin{tikzcd}
A \arrow[r, "!"] \arrow[d, "{([x],[x])}"'] & 1 \arrow[d, "t"] \\
A\times A \arrow[r, "\delta_A"'] & \Omega
\end{tikzcd}
\end{center}

In particular, the diagram commutes, so $A \xrightarrow{!} 1 \xrightarrow{t} \Omega$, is the truth table of $x =_A x$. Therefore, $x =_A x$ is valid in $\mathcal{E}$.

We saw $x=_A x$ is valid for any topos, but the equality sign carries a lot of information - it is a specific characteristic morphism - and, at first, any property regarding $=_A$ should work explicitly with characteristic morphism and other tools presented by the internal language. However, when we work with toposes is usual to omit the internal language machinery and {\em pretend} that objects are sets, monomorphisms are injective functions, epimorphisms are surjective functions, isomorphisms are bijective functions, and so on. Basically, we {\em pretend} that a topos is the topos $Set$, which is possible due to the \textit{Soundness Theorem} \cite[Chap 15]{mclarty1992elementary}. This advantage has a cost: we only can replicate a construction in $Set$ to an arbitrary topos if we restrain ourselves to ``constructive aspects'' presented in intuitionistic logic, because, in general, the law of excluded middle ( i.e., $ \varphi \lor \neg \varphi$) does not hold for all toposes.  For the same reason, we avoid using the axiom of choice, a ``non-constructive'' set-theoretical axiom. 
We will apply this procedure in the next section (see Theorem \ref{abE-th}).

\subsection{Grothedieck Topos Cohomology}
\label{sec:12}
Now, we replicate the formerly cohomology constructions (Section \ref{sec:6}). We hope it is clear that the (Grothendieck) Topos Cohomology exhibited is an extension of Sheaf Cohomology; however, new techniques are necessary to prove the toposes versions of the results introduced in the previous section\footnote{On the other hand, if a Grothendieck topos has  ``enough points'', then its cohomology coincides with some  spatial sheaf cohomology, see  \cite{moerdijk1996cohomology}.}. 
We begin with a useful but simple concept:

A parallel pair of morphisms $f,g: A \rightarrow B$ is \textit{reflexive} if exists a common  section $s: B \rightarrow A$ of $f$ and $g$, that is, $f \circ s = id_B = g\circ s$. In particular, a reflexive coequalizer is a coequalizer of a reflexive pair.

\begin{lem} \label{abE-le}

\begin{enumerate}
    \item For any elementary topos $\cal{E}$, the forgetful functor \\ $E: Ab(\mathcal{E}) \rightarrow \mathcal{E}$ creates limits and reflexives coequalizers \cite[Section 6]{johnstone77topostheory};
    \item For abelian categories, the $AB5$ condition is equivalent to the category having all small colimits and all filtered colimits being universal  \cite{grothendieck1957}.
\end{enumerate}
\end{lem}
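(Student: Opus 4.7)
The plan is to handle the two parts separately, each by a well-known argument.

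\textbf{Part (1).} For limits, my plan is to re-run the same observation already made in Section 2.3: since the abelian group object structure is given by finitely many morphisms ($m$, $i$, $e$) whose coherence is expressed by commutative diagrams involving only finite products and the terminal object, a diagram $D : \mathcal{I} \to Ab(\mathcal{E})$ has a limit whose underlying object is $L = \varprojlim (E \circ D)$, with multiplication induced from the cone $(m_{D(i)})_i$ through the canonical isomorphism $L \times L \cong \varprojlim (D(-) \times D(-))$, and similarly for $i_L$ and $e_L$. The group axioms for $L$ follow by the uniqueness part of the limit universal property applied to each relevant product vertex, and the resulting structure is tautologically the unique one making the projections into $D$ become homomorphisms, so $E$ creates the limit.

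For reflexive coequalizers I would exploit that $\mathcal{E}$ is cartesian closed: the functor $X \times (-)$ has a right adjoint, hence preserves all colimits in each variable. Given a reflexive pair $f, g : A \to B$ in $Ab(\mathcal{E})$ with coequalizer $q : B \to Q$ taken in $\mathcal{E}$, I would first argue (in two steps, applying $B \times (-)$ and then $(-) \times Q$, and using reflexivity to identify the joint coequalizer) that $q \times q : B \times B \to Q \times Q$ is the coequalizer of $(f \times f, g \times g)$, and iterate once more to get the analogous statement for $q \times q \times q$. Then $q \circ m_B$ coequalizes $(f \times f, g \times g)$, because $f, g$ are group-object morphisms and $q \circ f = q \circ g$, so it factors uniquely through $q \times q$, producing $m_Q$; the unit $e_Q$ and inversion $i_Q$ are obtained analogously. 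The associativity, unit and inverse diagrams for $(Q, m_Q, i_Q, e_Q)$ are then checked by precomposition with the epimorphisms $q \times q \times q$, $q \times q$ and $q$, invoking the axioms already valid on $B$.

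\textbf{Part (2).} Here my plan is to first observe that, in any abelian category satisfying $AB1$, $AB2$ and $AB3$, all small colimits already exist: binary coproducts come from biproducts, arbitrary coproducts from $AB3$, and coequalizers of $f, g : A \to B$ from the cokernel of $f - g$. Thus, under $AB3$, the content of the equivalence reduces to matching the $AB5$ identity $(\sum_i A_i) \cap B = \sum_i (A_i \cap B)$ with the assertion that filtered colimits are universal (preserved by pullback): the directed sup $\sum_i A_i$ is a filtered colimit of subobjects of $A$, and intersection with the subobject $B$ is precisely the pullback of $B \hookrightarrow A$ along the colimit cocone, so $AB5$ is literally the instance of universality of filtered colimits for pullbacks along monomorphisms. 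The main obstacle, which I would not grind out by hand but rather delegate to \cite[Proposition 1.8]{grothendieck1957}, is the reverse passage from this special case (pullbacks of monos) to universality of filtered colimits along arbitrary morphisms; this uses the epi-mono factorization available from $AB1$-$AB2$ together with exactness of filtered colimits of short exact sequences.
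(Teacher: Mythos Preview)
The paper does not actually prove this lemma: it is stated with bare citations to \cite[Section 6]{johnstone77topostheory} and \cite{grothendieck1957}, and is then used as a black box in the subsequent results. So there is no ``paper's own proof'' to compare against.

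That said, your sketch is correct and tracks the standard arguments one finds in those references. For Part~(1), the creation of limits is exactly the observation already recorded in Section~\ref{sec:4} of the paper, and your treatment of reflexive coequalizers---using cartesian closedness so that $(-)\times X$ preserves colimits, then the two-step argument with reflexivity to get that $q\times q$ (and $q\times q\times q$) are again coequalizers, then descending $m_B$, $i_B$, $e_B$ through these epimorphisms---is precisely Johnstone's argument. For Part~(2), reducing the question under $AB3$ to the distributivity of directed sups over intersection in the subobject lattice, and recognizing this as universality of filtered colimits along monos, is the right move; the passage to arbitrary pullbacks via epi--mono factorization and exactness of filtered colimits is indeed the substance of Grothendieck's discussion of $AB5$ in T\^ohoku (though you may want to double-check the exact proposition number you cite).
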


We use the above lemma to sketch the proofs of the main results in this subsection.

\begin{teo} \label{abE-th}
The category $Ab(\mathcal{E})$ is abelian for any elementary topos $\mathcal{E}.$
\end{teo}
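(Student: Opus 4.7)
The plan is to exploit the Mitchell-B\'enabou internal language of $\mathcal{E}$ together with the Soundness Theorem, so that the familiar constructive proof that $Ab$ is abelian can be transported into $Ab(\mathcal{E})$; the technical bridge is Lemma \ref{abE-le}(1), which lets us lift the constructions performed in $\mathcal{E}$ to $Ab(\mathcal{E})$.

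First I would establish that $Ab(\mathcal{E})$ is additive. The terminal object $1$ of $\mathcal{E}$ carries a unique abelian group structure and becomes the zero object. Binary products of abelian group objects exist and are created from $\mathcal{E}$ by the forgetful functor $E$, and they acquire the structure of biproducts by combining the universal pair of projections with the sections coming from the unit morphisms $e: 1 \to G$ and $e : 1 \to H$. The $Ab$-enrichment of the hom-sets is given by the pointwise formula $f + g := m_H \circ \langle f, g \rangle$, with inverse $-f := i_H \circ f$ and neutral element given by the null morphism; associativity, commutativity, and the unit/inverse laws follow directly from the abelian group axioms of the codomain, expressed diagrammatically.

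Next I would verify AB1. Kernels exist immediately: the kernel of $f: G \to H$ in $Ab(\mathcal{E})$ is the equalizer of $f$ and the zero morphism, which by Lemma \ref{abE-le}(1) is created from the corresponding equalizer in $\mathcal{E}$ and inherits its group structure. Cokernels require more care. Working in the internal language, I would form the image subgroup $N \hookrightarrow H$ from the epi-mono factorization of $f$ available in any topos (endowed with the evident subgroup structure), then define the congruence $R \subseteq H \times H$ by $h_1 \sim h_2 \iff h_1 - h_2 \in N$. The two projections $R \rightrightarrows H$ form a reflexive pair, with the diagonal $H \to R$ as common section, so by Lemma \ref{abE-le}(1) their coequalizer exists in $Ab(\mathcal{E})$ and is created from $\mathcal{E}$; a short internal-language computation shows that it is the cokernel of $f$.

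Finally, I would verify AB2 by the Soundness Theorem: the standard constructive argument that in $Ab$ every monomorphism is the kernel of its cokernel, and every epimorphism the cokernel of its kernel, uses only image factorizations, subgroup quotients, and the universal properties already established above, so it transfers verbatim to $Ab(\mathcal{E})$. The main obstacle is clearly the cokernel construction: one must recognize the cokernel as a reflexive coequalizer (namely, of the kernel pair of the projected quotient map) in order to invoke Lemma \ref{abE-le}(1), and then check carefully, within the internal language, that the relation $R$ built from the image subgroup $N$ is genuinely a congruence and that the resulting quotient satisfies the cokernel universal property in $Ab(\mathcal{E})$. Once cokernels are under control, the remaining steps are essentially a transcription of the classical diagram chases for abelian groups.
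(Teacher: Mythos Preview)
Your proposal is correct and follows the same overall strategy as the paper: transport the constructive proof that $Ab$ is abelian via the internal language/Soundness Theorem, and use Lemma~\ref{abE-le}(1) to lift the relevant (co)limits from $\mathcal{E}$ to $Ab(\mathcal{E})$. The one genuine difference is in how the cokernel is realized as a reflexive coequalizer. You first form the image subgroup $N\hookrightarrow H$ and then the congruence $R=\{(h_1,h_2):h_1-h_2\in N\}\rightrightarrows H$; the paper instead takes directly the reflexive pair $m\circ(f\times id_B),\,p_2:A\times B\rightrightarrows B$ with common section $(0,id_B)$, which has the same coequalizer but avoids the preliminary epi--mono factorization. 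For AB2 the paper is also a bit more concrete than your blanket appeal to Soundness: for epimorphisms it uses the topos-specific fact that every epi in $\mathcal{E}$ is a coequalizer, rewrites this as a coequalizer of a pair already living in $Ab(\mathcal{E})$, and hence as $coker(h'-g')$; for monomorphisms it constructs the comparison map $t$ into $Ker(coker f)$ and invokes the internal language only at the last step, to see that $t$ is epi (and therefore iso). Your route is equally valid, just slightly less direct on the cokernel side and slightly more hand-wavy on AB2.
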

\begin{proof}
Show that $Ab(\mathcal{E})$ is $Ab$-category follows by straightforward calculations. To see it is an additive category we will use the internal language of $\mathcal{E}$. Thus we need to prove that the following objects exist in $Ab(\mathcal{E})$: terminal and initial objects, binary products, and binary coproducts; moreover finitary products and finitary coproducts must coincide in $Ab(\mathcal{E})$.

We already know that terminal objects and binary products exists in $Ab(\mathcal{E})$ because $\mathcal{E}$ has finite limits and, by Lemma \ref{abE-le}.1, the forgetful functor creates finite limits, so $Ab(\mathcal{E})$ has finite limits.

We will use the internal logic of the topos to continue the proof. Suppose  $\mathcal{E} = Set,$ then $Ab(\mathcal{E}) \simeq Ab.$ It is know that $Ab$ is an additive category and the demonstration of this fact only uses constructive arguments. By the discussion in \ref{sec:11}, $Ab(\mathcal{E})$ is an additive category for any topos $\cal{E}$, and not only $\mathcal{E} = Set$. Usually, this argumentation is enough, but let's elaborate a bit more.

Consider $X$ an object of a topos $\cal{E}$ equipped with morphisms $m_X: X \times X \rightarrow X,$ $i_X: X \times X$ and $e_X: 1 \times X$ satisfying the following formulas of abelian groups 
\begin{center}
$\forall x : X, y: X, z: X \,(m_X(m_X(x,y),z) = m_X(x,(m_X(y,z))$\\ $\forall x: X, 0 : 1 \, (m_X(e_X \times id_X)(0,x)) = m_X(id_X \times e_X(x,0)))$ \\ $\forall x: X \, (m_X(i_X \times id_X(\triangle(x))) = e_X(!(x)))$\\
$\forall x : X, y: X \, (m_X(x,y) = m_X(y,x))$
\end{center}
This formulas may be described by diagrams, so $X$ is equivalent to an object in $Ab(\mathcal{E})$ (see Section \ref{sec:4} to remind the notation). In particular, the terminal object in $Set$ - a singleton - corresponds to the terminal object in $Ab(Set) \simeq Ab$ - the trivial group denoted by $1$ - thus, for each object $A$ of $Ab(Set)$, there is a unique morphism $f: A \to 1$. In other words: $$\forall A : Ab \,\, (f: A \to 1) \wedge (g:  A \to 1) \implies f=g$$ 
That is, the terminal object can be described by a formula. In the same way, the fact that there is a unique morphism $1 \to A$, for all $A$ object in $Ab$, also can be described by a formula. Therefore, we have a constructive proof that $1$ is a zero object for $Ab(Set)$. The \textit{Soundness Theorem} mentioned in the previous section guarantees we can replace $Set$ by any topos $\cal{E}$, so $1$ is a zero object in $Ab(\mathcal{E})$. 
Similarly, if we take the product of two objects in $Set$, we may obtain a product in $Ab(Set)$. Since in $Ab(Set)$ the product has an (internal) abelian group structure and all finitary products are finitary coproducts in $Ab$ (direct sum and direct product coincide for finite indices), we conclude the binary product is a coproduct in $Ab(Set)$ (in fact, they are biproducts, an equational notion described in Section 2.1) and, again, this proof is constructive thus is still valid in $Ab(\mathcal{E})$. Summing up,  $Ab(\cal{E})$ is an additive category for any elementary topos $\cal{E}$ see \cite[Chap 16.6]{mclarty1992elementary} for the description of the coproduct diagram using formulas.

It is not difficult to see that any  morphism $f: A \rightarrow B$ in $Ab({\cal E})$ has a kernel; since the forgetful functor creates limits (Lemma \ref{abE-le}.1),  $ker(f)$ is the equalizer $equal(0,f)$.

Now, let $f$ be an epimorphism in $Ab({\cal E})$, then it is also an epi in ${\cal F}$. But any epi in ${\cal E}$ is a coequalizer, then $f = coeq(g,h)$ for some $g, h \in {\cal E}$. Since $f \in Ab({\cal E})$, $f$ can be rewritten as $f=coeq(g',h')$ for some $g',h' \in Ab({\cal E})$.  Thus $$f = coeq(g',h') = coeq(0, h'-g') = coker(h'-g')$$  

To conclude $Ab(\mathcal{E})$ is an abelian category we have to construct a cokernel of an arbitrary morphism $f: A \rightarrow B$ in $Ab({\cal E})$ and show that any monomorphism in $Ab(\mathcal{E})$  is a kernel in $Ab(\mathcal{E})$.

We take a coequalizer in $\mathcal{E}$ of the pair $m \circ (f \times id_B)$ and $p_2$, where $m: B \times B \rightarrow B$ is the morphism $m$ introduced at the definition of group object, $p_2: A\times B \rightarrow B$ is the projection in the second coordinate, and $f: A \rightarrow B$ is a morphism in $Ab(\mathcal{E})$.

Let $q = coeq(m \circ (f \times id_B),p_2)$. First, note that \begin{tikzcd}
A \times B \arrow[r, "m\circ(f\times id_B)", shift left=1 ex] 
\arrow[r, "p_2"', shift right=0.5 ex] 
& B\end{tikzcd} is a reflexive pair with section $s = (0,id_B): B \rightarrow A \times B$. Considering parts of the diagram of coequalizer, and of the cartesian product of morphisms, we have:
\begin{center}
\begin{tikzcd}
B \arrow[r, "s"] & A\times B \arrow[d, "p_1"'] \arrow[r, "f\times id_B"] \arrow[rr, "p_2", bend left=49] & B\times B \arrow[r, "m"] \arrow[d, "p_1"] & B \arrow[r, "q"] & C \\
                 & A \arrow[r, "f"']                                                                     & B \arrow[ru, "id_B"']                     &                  &  
\end{tikzcd}
\end{center}

With a lot of diagram calculations and the coequalizer universal property, is possible to show that $q$, a coequalizer in $Ab(\mathcal{E})$, is the cokernel of $f,$ for any $f$ in $Ab(\mathcal{E}).$ 

Finally, let $f$ be a monomorphism in $Ab(\mathcal{E})$. Denote $coker(f) = q$, then $q\circ f =0$. Since $q \circ ker(q) =0$, by the universal property of $ker(q)$, there exists a unique $t \in Ab({\cal E})$  such that $f = ker(q) \circ t$ and this $t$ is a mono, because $f$ is a mono. Until now, all the information were obtained from very general categorical arguments. However, ${\cal E}$ is an elementary topos and we can simulate in ${\cal E}$ the proof, made in $Set$ with elements, that establishes that $t$ is ``surjective'' (i.e. an epimorphism) in ${\cal E}$ thus, as we already mentioned before, it follows that $t$ it is an isomorphism in the topos ${\cal E}$. Since $t \in Ab({\cal E})$, $t$ is an isomorphism in $Ab({\cal E})$. Summing up, we have shown that any mono in $Ab({\cal E})$ is a kernel, indeed, it is the kernel of its own cokernel.  

\end{proof} 

\textbf{Remark:} It is possible to prove the above Theorem without the internal language machinery, but the paper \cite{10.2307/43681686} shows it requires 10 pages of diagram calculations to fulfill the verification. We know that a proper introduction to the internal language of toposes requires more than 10 pages, yet it is more convenient and efficient in the long haul.

By the Grothendieck Theorem \ref{grotheorem}, if an abelian category satisfies $AB5$ and has a generator then it has enough injectives. Thus, to state $Ab(\mathcal{E})$ has enough injectives, we only need to prove this two conditions.

\begin{teo} 
If $\mathcal{E}$ is a Grothendieck topos, then $Ab(\mathcal{E})$ is $AB5$ category and has a generator.
\end{teo}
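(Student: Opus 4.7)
The plan is to verify the two required properties for $Ab(\mathcal{E})$ separately, leveraging Lemma \ref{abE-le}, Lemma \ref{giraud}, and the abelianness established in Theorem \ref{abE-th}. By Lemma \ref{abE-le}.2, the $AB5$ condition reduces to two tasks: showing that $Ab(\mathcal{E})$ has all small colimits and that its filtered colimits are universal.

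First I would establish that the forgetful functor $E: Ab(\mathcal{E}) \to \mathcal{E}$ creates filtered colimits. The abelian group structure on an object consists of morphisms $m, i, e$ satisfying equations encoded by a finite number of commutative diagrams built from finite products; since filtered colimits in the Grothendieck topos $\mathcal{E}$ commute with finite limits (Lemma \ref{giraud}.4), the structure transports uniquely along a filtered colimit computed in $\mathcal{E}$, and the verification of the axioms passes to the colimit. Combined with the finite biproducts from Theorem \ref{abE-th}, every set-indexed coproduct in $Ab(\mathcal{E})$ arises as a filtered colimit of finite biproducts over the directed set of finite subsets of the indexing set. Coequalizers are then available, since $Ab(\mathcal{E})$ is abelian: the coequalizer of $f,g$ is the cokernel of $f-g$. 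Hence $Ab(\mathcal{E})$ is cocomplete.

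For the universality of filtered colimits I would argue that pullbacks in $Ab(\mathcal{E})$ are created by $E$ (Lemma \ref{abE-le}.1), and filtered colimits in $Ab(\mathcal{E})$ are also created by $E$ (previous step). Since all colimits in $\mathcal{E}$ are universal (Lemma \ref{giraud}.1), pullbacks commute with filtered colimits in $\mathcal{E}$, and the identity of the two computations with those in $Ab(\mathcal{E})$ lifts this property to $Ab(\mathcal{E})$, completing $AB5$.

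For the generator, I would take the family $\{G_i\}_{i\in I}$ of generators of $\mathcal{E}$ furnished by Lemma \ref{giraud}.3 and pass to the family of free abelian group objects $\{\mathbb{Z}[G_i]\}_{i\in I}$, where $\mathbb{Z}[-]: \mathcal{E} \to Ab(\mathcal{E})$ is the left adjoint of $E$; given two distinct arrows $f,g: A \to B$ in $Ab(\mathcal{E})$, their underlying morphisms are distinguished by some $h: G_i \to E(A)$, and its adjoint transpose $\tilde{h}: \mathbb{Z}[G_i] \to A$ then separates $f$ and $g$. A single generator is obtained by taking the coproduct $\bigoplus_{i\in I} \mathbb{Z}[G_i]$, available since $Ab(\mathcal{E})$ is now known to be cocomplete. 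The main obstacle is ensuring the existence of the free abelian group functor $\mathbb{Z}[-]$: although a standard application of the adjoint functor theorem delivers it from the cocompleteness of $Ab(\mathcal{E})$ and the preservation of limits and filtered colimits by $E$, the cleanest way to avoid any circularity with the generator step is to construct $\mathbb{Z}[-]$ explicitly using the presentation $\mathcal{E} \simeq Sh(\mathcal{C},J)$, by sheafifying the pointwise free abelian group on a presheaf; alternatively, one can work directly inside the Mitchell--B\'enabou language of $\mathcal{E}$ to construct $\mathbb{Z}[X]$ as a quotient of finitely supported integer-valued functions on $X$, invoking the Soundness Theorem to justify the constructive steps.
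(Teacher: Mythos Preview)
Your proposal is correct and follows essentially the same route as the paper: the same reduction of $AB5$ via Lemma \ref{abE-le}.2, the same use of ``$E$ creates filtered colimits because filtered colimits commute with finite limits in $\mathcal{E}$'' to lift universality, and the same generator argument via the free abelian group left adjoint to $E$. The only cosmetic differences are that the paper outsources cocompleteness to a reference (\cite[Theorem 8.11.iii]{johnstone77topostheory} or the sheafification functor) rather than building arbitrary coproducts as filtered colimits of finite biproducts, and that it forms the single generator as $Z(\coprod_{i} G_i)$ (coproduct in $\mathcal{E}$, then $Z$) whereas you form $\bigoplus_i \mathbb{Z}[G_i]$ (apply $Z$, then coproduct in $Ab(\mathcal{E})$); these agree since $Z$ preserves colimits. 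Your remark about the potential circularity in obtaining $\mathbb{Z}[-]$ via the adjoint functor theorem is a good one; the paper sidesteps it by invoking the natural numbers object in a Grothendieck topos to build $Z$ directly.
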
 
\begin{proof}
Let's see that $Ab(\mathcal{E})$ satisfies $AB5.$ By Lemma \ref{abE-le}.2, we need to prove that $Ab(\mathcal{E})$ has all small colimits with all filtered colimits being universal. The first part - $Ab(\mathcal{E})$ has all small colimits - can be proven in several ways, none of them is simple: a possible form is presented in the proof of \cite[Theorem 8.11.iii]{johnstone77topostheory}; another argument follows from the construction of the associated sheaf functor (or sheafification functor), which is left adjoint to the inclusion functor $i: Sh(C,J) \to Set^{C^{op}}$ and preserves colimits. We choose not to present the associated sheaf functor here, but we indicate \cite[Chap III.5]{maclane1992sheaves} for a complete explanation. 

In a Grothendieck topos, filtered colimits and finite limits commutes. Since ${E}$ creates finite limits, it creates filtered colimits and pullbacks. Besides that, all colimits are universal in a topos, that is, they are preserved by pullbacks. Thus filtered colimits are universal in $Ab(\mathcal{E}).$ See \ref{giraud} to remember Grothendieck toposes's properties.

Now we prove that $Ab(\mathcal{E})$ has a set of generators.

By Giraud Theorem, $\mathcal{E}$ has a set of generators $\{G_i\}_{i \in I}$. Let $f,g: X \rightarrow Y$ in $Ab(\mathcal{E})$ so $f$ and $g$ are morphisms in $\mathcal{E}$. If $f \neq g$, since $\{G_i\}_{i \in I}$ is a generator of $\mathcal{E}$, there is $h_i: G_i \rightarrow E(X),$ for some $i \in I$, such that $$E(f) \circ h_i \neq E(g) \circ h_i$$
Consider the coproduct universal morphism $h:\coprod_{i \in I}G_i \rightarrow E(X)$ and the canonical morphism $\alpha_i: G_i \rightarrow \coprod_{i \in I}G_i$. We have $h_i = h \circ \alpha_i$ so
$$E(g) \circ h \circ \alpha_i = E(g) \circ h_i \neq E(f) \circ h_i =  E(f) \circ h \circ \alpha_i $$

Then $E(g) \circ h \neq E(f) \circ h$.

Use the fact that the forgetful functor has a left adjoint functor, $Z:  \mathcal{E} \rightarrow Ab(\mathcal{E})$; this is a generalization of the ``free abelian group'' construction from the topos $Set$ to any Grothendieck topos\footnote{ Because these toposes have the internal ``set of all natural numbers''.}. Apply it in $h$, so  $Z(h):Z(\coprod_{i \in I}G_i) \to X,$ is the associated morphism in $Ab({\cal E})$.
The adjointness of $Z$ and $E$ guarantees that $f \circ Z(h) \neq  g \circ Z(h)$, thus $Z(\coprod_{i \in I}G_i)$ is generator of $Ab(\mathcal{E}).$

\end{proof} 

Therefore, by the Grothendieck Theorem:
\begin{teo}
If $\mathcal{E}$ is a Grothendieck topos, then the abelian category $Ab(\mathcal{E})$ has enough injectives.
\end{teo}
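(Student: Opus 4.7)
The plan is to observe that this statement is essentially a direct combination of two results already established in the excerpt, so the proof should be very short.

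First, I would recall that the previous theorem (just proved immediately above) establishes that if $\mathcal{E}$ is a Grothendieck topos, then $Ab(\mathcal{E})$ is an $AB5$ abelian category possessing a set of generators. The abelianness comes from Theorem \ref{abE-th}, which holds for any elementary topos, while the $AB5$ property and the existence of a generator required the stronger Grothendieck topos hypothesis (via Giraud's Theorem, the left adjoint $Z$ to the forgetful functor, and the commutation of filtered colimits with finite limits in $\mathcal{E}$).

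Second, I would invoke the Grothendieck Theorem stated on page \pageref{grotheorem} of the excerpt: any abelian category satisfying $AB5$ and having a generator has enough injectives. Strictly speaking, the theorem cited requires only a single generator rather than a set of generators, but this is a non-issue since the coproduct $\bigoplus_{i \in I} G_i$ of a set of generators $\{G_i\}_{i \in I}$ is itself a generator (and this coproduct exists in $Ab(\mathcal{E})$ by the $AB3$ part of $AB5$). Combining these two inputs yields the conclusion immediately.

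The only place where I might pause is this passage from ``set of generators'' to ``single generator''; I would spell out that $\bigoplus_{i \in I} Z(G_i)$ (or equivalently $Z(\coprod_{i \in I} G_i)$, already constructed in the preceding proof) is a single generator of $Ab(\mathcal{E})$ in the sense required by Grothendieck's Theorem. After that remark, the application of Grothendieck's Theorem closes the proof with no further obstacle. There is no real technical hurdle here; the work was done in the two preceding theorems and in the citation of \cite[Theorem 1.10.1]{grothendieck1957}.
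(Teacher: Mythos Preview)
Your proposal is correct and matches the paper's approach exactly: the paper simply writes ``Therefore, by the Grothendieck Theorem'' and states the result, relying on the preceding theorem (which already produced the single generator $Z(\coprod_{i \in I} G_i)$) together with \cite[Theorem 1.10.1]{grothendieck1957}. Your remark about passing from a set of generators to a single one is already handled in the paper's proof of the preceding theorem, so no additional argument is needed.
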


 By Theorem \ref{theo:derivedfunctor}, we need a left exact additive functor, which we will again call global section functor. First, note there is a unique (up to isomorphism) geometric morphism $\Gamma : {\cal{E}} \to  Set$, and is enough to argue its inverse image $\Gamma^*: Set \to \cal{E}$ is unique (up to natural isomorphisms): by definition of geometric morhism $\Gamma^*$ send terminal objects to terminal objects, and preserver colimits. Besides that, every set is a disjoint union of singletons (terminal objects in $Set$), so $\Gamma^*$ must be given by $S \mapsto \coprod_{s \in S} 1,$  where $1$ is the terminal object in $\cal{E}$. 

Next, we take the direct image functor of $\Gamma$, which must be $\Gamma_* = Hom_{\mathcal{E}}(1,-): \mathcal{E} \to Set$. Finally, we define the global section functor as
$$\Gamma_{Ab} := Hom_{\mathcal{E}}(1,-): Ab(\mathcal{E}) \rightarrow Ab(Set),$$
Since $\Gamma_{Ab}$ is induced by the direct image of $\Gamma$ then, by Lemma \ref{abE-le}.(1), $\Gamma_{Ab}$ preserves limits, thus it is left exact.

Is not difficult to prove that the direct image of any geometric morphism preserves injectives. We will show this here to introduce a usual and simple manipulation with direct and inverse images using adjoint properties:

Let $f: \mathcal{F} \rightarrow \mathcal{E}$ geometric morphism, and $I$ injective object in $Ab(\mathcal{F})$. Consider the following diagram in $Ab({\cal E})$
\begin{center}
\begin{tikzcd}
X \arrow[r, "m", tail] \arrow[d, "h"] & Y \\
f_*(I) & 
\end{tikzcd}
\end{center}
The adjoint property of geometric morphisms allow us to transpose this diagram and obtain the following diagram in $Ab({\cal F})$
\begin{center}
\begin{tikzcd}
f^*(X) \arrow[r, "f^*(m)", tail] \arrow[d, "\tilde{h}"] & f^*(Y) \\
I & 
\end{tikzcd}
\end{center}
Note that $f^*(m)$ is a monomorphism in ${\cal F}$ and thus in $Ab({\cal F})$, since $f^*$ preserves finite limits.

Next, we use the injectiviness of $I$ to complete the diagram with $ g : f^*(Y) \to I$ that makes it commutative in $Ab({\cal F})$ . Then we transpose, by adjoint property, one last time, and find a commutative diagram in $Ab({\cal E})$ that guarantees that $f_*(I)$ is an injective object in $Ab(\mathcal{E})$
\begin{center}
\begin{tikzcd}
X \arrow[r] \arrow[d] & Y \arrow[ld] \\
f_*(I) & 
\end{tikzcd}
\end{center}

We define the $q$-th cohomology group of $\mathcal{E}$ with coefficientes in $F$, object in $Ab(\mathcal{E})$ as the $q$-th right derived functor of $\Gamma_{Ab}(F)$. In other words, $$H^q(\mathcal{E}, F) = R^q(\Gamma_{Ab})(F)$$

We can define cohomology for objects different from the terminal: Let $B$ be an object of $\mathcal{E}$, since $Hom_{\mathcal{E}}(B,-)$ is a left exact functor we can consider right derived functor for it, denoted by $H^q(\mathcal{E},B;F)$. The problem is how to describe $H^q(\mathcal{E},B;F)$ in terms of $H^q(\mathcal{E}, F)$. The idea is that the funtor $B^* : \mathcal{E} \rightarrow \mathcal{E} \downarrow B$, which sends an object $A$ in $\mathcal{E}$ into  $p_2: A \times B \rightarrow B$ in $\mathcal{E} \downarrow B$, induces an exact functor $B^*_{Ab} : Ab(\mathcal{E}) \rightarrow Ab(\mathcal{E} \downarrow B)$ that preserves injectives, and is possible to establish an isomorphism $H^q(\mathcal{E}, B;F) \cong H^q(\mathcal{E}\downarrow B,B^*_{Ab}(F))$. See \cite[page 262]{johnstone77topostheory}.

Grothendieck toposes admit a notion of flabby object: We say that $F$ in $Ab(\mathcal{E})$ is \textit{flabby} if $H^q(\mathcal{E}, B; F) = 0$, for all $q>0$ and all $B$ object in $\mathcal{E}$.

\begin{prop}
Every injective object in $Ab(\mathcal{E})$ is a flabby  object in $Ab(\mathcal{E})$.
\end{prop}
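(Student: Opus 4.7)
The plan is to reduce the vanishing statement directly to the general fact that right derived functors of any left exact additive functor vanish in positive degrees on injective objects, together with the identification $H^q(\mathcal{E}, B; F) \cong H^q(\mathcal{E} \downarrow B, B^*_{Ab}(F))$ that was established just before the proposition.

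First, I would fix an injective object $F$ of $Ab(\mathcal{E})$ and an arbitrary object $B$ of $\mathcal{E}$, and note that, by the isomorphism recalled above, it suffices to show $H^q(\mathcal{E} \downarrow B, B^*_{Ab}(F)) = 0$ for every $q > 0$. Since the functor $B^*_{Ab} : Ab(\mathcal{E}) \to Ab(\mathcal{E} \downarrow B)$ was stated to be exact and to preserve injectives, $B^*_{Ab}(F)$ is an injective object of $Ab(\mathcal{E} \downarrow B)$. Thus the problem reduces to the well-known fact that, for a left exact additive functor between abelian categories with enough injectives, the higher right derived functors vanish on every injective object.

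Next, I would invoke this standard fact as follows: if $I$ is injective in an abelian category with enough injectives, then $0 \to I \xrightarrow{\mathrm{id}} I \to 0$ is an injective resolution of $I$ concentrated in degree $0$, so by Theorem \ref{theo:derivedfunctor} we have $R^q G(I) \cong H^q G(I^{\bullet}) = 0$ for all $q > 0$, where $G$ is any left exact additive functor. Applying this with $G = Hom_{\mathcal{E} \downarrow B}(1_{\mathcal{E}\downarrow B}, -)$ (the global section functor on the slice topos $\mathcal{E} \downarrow B$, which is itself a Grothendieck topos) and with $I = B^*_{Ab}(F)$ yields $H^q(\mathcal{E} \downarrow B, B^*_{Ab}(F)) = 0$ for all $q > 0$, as required.

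I do not anticipate a serious obstacle: every nontrivial ingredient (that slice categories of Grothendieck toposes are again Grothendieck toposes so the cohomology is defined via derived functors, that $B^*_{Ab}$ preserves injectives, and the comparison isomorphism $H^q(\mathcal{E}, B; F) \cong H^q(\mathcal{E} \downarrow B, B^*_{Ab}(F))$) was either proved or explicitly cited immediately before the proposition. The only point one might want to be slightly careful about is checking the case $B = 1$ (which gives the ordinary cohomology $H^q(\mathcal{E}, F)$) is genuinely included in the argument, but taking $B$ to be the terminal object of $\mathcal{E}$ makes $B^*_{Ab}$ the identity and reduces the statement in that case to $H^q(\mathcal{E}, F) = 0$, which is the same vanishing-on-injectives fact.
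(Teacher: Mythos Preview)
Your proposal is correct and follows essentially the same approach as the paper: both arguments pass through the isomorphism $H^q(\mathcal{E},B;F)\cong H^q(\mathcal{E}\downarrow B,B^*_{Ab}(F))$, use that $B^*_{Ab}$ preserves injectives, and then conclude by the trivial injective resolution $0\to I\xrightarrow{id}I\to 0$ that higher derived functors vanish on injectives. If anything, your write-up is more explicit about which global section functor is being derived; note, incidentally, that the slice detour is not strictly needed, since by definition $H^q(\mathcal{E},B;F)=R^qHom_{\mathcal{E}}(B,-)(F)$ already vanishes on injective $F$ for $q>0$.
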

\begin{proof}

More generally, for any injective object $F$ in an abelian category we have an injective resolution $0 \rightarrow F \xrightarrow{id_F} F \rightarrow 0 \rightarrow 0 \rightarrow ...$ of $F$. Applying the left exact functor $\Gamma$ and taking its right derived functors: $$0 = R^q\Gamma(F) \cong H^q(\Gamma(F^{\bullet}))$$
Translating to our scenario, $F$ is an injective object in $Ab(\mathcal{E})$ with the above injective resolution. For each object $B$ in $\mathcal{E}$, we construct a left exact functor $B^*_{Ab} : Ab(\mathcal{E}) \rightarrow Ab(\mathcal{E} \downarrow B)$, as previously mentioned. Then $H^q(\mathcal{E}\downarrow B,B^*_{Ab}(F)) \cong R^q(B^*_{Ab}(F)) = 0$. Therefore, $F$ is flabby. 
\end{proof}

The following lemma is useful to prove the analogous  version of Proposition 3.2. We exhibit the proof provided in \cite{johnstone77topostheory} because it uses manipulations with geometric morphisms that show up every time we are working with Grothendieck Toposes.
\begin{lem}
Let $f: \mathcal{F} \rightarrow \mathcal{E}$ be a geometric morphism, with $\mathcal{E} = Sh(\mathcal{C},J)$, $F$ an object in $Ab(\mathcal{F})$, and $l: \mathcal{C} \rightarrow Sh(\mathcal{C},J)$ the canonical functor ($U \mapsto i^*(Hom(-,U))$). Then $R^qf_*(F)$ is the $J$-sheaf associated to the presheaf $U \mapsto H^q(\mathcal{F},f^*l(U);F)$.
\end{lem}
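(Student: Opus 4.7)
The plan is to reduce the identification to two standard inputs: (i) the adjunction $f^* \dashv f_*$ paired with the sheaf-theoretic Yoneda formula $G(U) \cong Hom_{\mathcal{E}}(l(U),G)$, valid for any sheaf $G \in \mathcal{E}$; and (ii) the exactness of the associated sheaf functor $a : Ab^{\mathcal{C}^{op}} \to Sh_{Ab}(\mathcal{C},J)$, which guarantees that cohomology of a complex of abelian presheaves sheafifies to the cohomology computed in the sheaf category.

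First, I would fix an injective resolution $F \to I^{\bullet}$ of $F$ in $Ab(\mathcal{F})$; this is available since $Ab(\mathcal{F})$ has enough injectives. By definition, $R^q f_*(F) = H^q(f_* I^{\bullet})$, the cohomology being computed in the abelian category $Ab(\mathcal{E})$. Applying, componentwise in $q$, the chain of natural isomorphisms
\[
(f_* I^q)(U) \;\cong\; Hom_{\mathcal{E}}(l(U), f_* I^q) \;\cong\; Hom_{\mathcal{F}}(f^* l(U), I^q),
\]
where the first uses the sheaf Yoneda formula and the second the adjunction $f^*\dashv f_*$, one obtains a natural isomorphism of complexes of presheaves between $U \mapsto (f_* I^\bullet)(U)$ and $U \mapsto Hom_{\mathcal{F}}(f^* l(U), I^\bullet)$.

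Next, since $I^\bullet$ is an injective resolution of $F$ in $Ab(\mathcal{F})$ and $Hom_{\mathcal{F}}(f^* l(U),-)$ is a left exact additive functor, Theorem \ref{theo:derivedfunctor} identifies the cohomology of the right-hand complex with the right derived functors evaluated at $F$:
\[
H^q\bigl(Hom_{\mathcal{F}}(f^* l(U), I^\bullet)\bigr) \;\cong\; R^q Hom_{\mathcal{F}}(f^* l(U),-)(F) \;=\; H^q(\mathcal{F}, f^* l(U); F).
\]
Hence the presheaf of pointwise cohomology $U \mapsto H^q\bigl((f_* I^\bullet)(U)\bigr)$ is naturally isomorphic to the presheaf $U \mapsto H^q(\mathcal{F}, f^* l(U); F)$.

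Finally, I would invoke exactness of the sheafification functor $a$ (left exact as a left adjoint preserving finite limits, right exact as any left adjoint preserving colimits). Exactness of $a$ implies that it commutes with the formation of kernels, cokernels, and hence of cohomology objects of cochain complexes, so
\[
R^q f_*(F) \;=\; H^q(f_* I^\bullet) \;\cong\; a\bigl(U \mapsto H^q((f_* I^\bullet)(U))\bigr) \;\cong\; a\bigl(U \mapsto H^q(\mathcal{F}, f^* l(U); F)\bigr),
\]
which is exactly the statement. The main obstacle is the last step: to rigorously justify that $H^q$ taken inside $Ab(\mathcal{E})$ coincides with the sheafification of the pointwise presheaf cohomology of $f_* I^\bullet$. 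The technical input is the exactness of $a$, together with the compatibility between the abelian-group structure of $Ab(\mathcal{E})$ (as abelian group objects in the topos) and that of $Ab^{\mathcal{C}^{op}}$ under the inclusion/sheafification adjunction. Once this is in hand, the adjunction computation and the standard interpretation of derived functors via injective resolutions close the argument; implicitly one also uses that $f_*$ sends injectives of $Ab(\mathcal{F})$ to objects computing $R^q f_*$ correctly, which follows from the fact (proved just before the lemma) that the direct image of a geometric morphism preserves injectives.
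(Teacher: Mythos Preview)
Your argument is correct and rests on the same ingredients as the paper's proof: the Yoneda identification $G(U)\cong Hom_{\mathcal{E}}(l(U),G)$, the adjunction $f^*\dashv f_*$, and the exactness of the sheafification functor. The organization differs, however. The paper proceeds in two stages: first it treats the case where $J$ is the minimal topology, so that $\mathcal{E}=Set^{\mathcal{C}^{op}}$, where evaluation at $U$ is exact and one only needs to identify the $q=0$ level $f_*(-)(U)\cong H^0(\mathcal{F},f^*Hom(-,U);-)$ to conclude for all $q$; then, for arbitrary $J$, it factors through the presheaf topos via $g=i\circ f$ and uses exactness of $i^*$ to get $R^qf_*\cong i^*R^qg_*$, reducing to the minimal case. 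Your version bypasses this two-step reduction by fixing an injective resolution from the outset and computing the presheaf cohomology pointwise, with the sheafification step absorbing what the paper isolates as the passage from $g$ to $f$. Your route is a bit more explicit about the injective resolution; the paper's route makes the role of the presheaf topos as an intermediate ambient more visible. Either way the content is the same.
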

\begin{proof}
We split the proof of this lemma in two parts. First, we consider $J$ as minimal topology, and after that $J$ will be an arbitrary Grothendieck Topology.

The Grothendieck topology $J$ be minimal means $J(C) = \{ \text{maximal sieve in }C \}$, where $C$ is an object in $\mathcal{C}$. The minimal topology implies that $\mathcal{E} = Set^{\mathcal{C}^{op}}.$ Since $f$ is a geometric morphism, $f^*$ preservers finite limits and is left adjoint of $f_*$. So $f_*$ preserves small limits, $f_*(-)(U)$ is a left exact functor, and we can obtain the right derived functor $f_*(-)(U)$.
Besides that, by group cohomology definition and adjoint property of geometric morphism:
\begin{align*}
R^0f_*(-)(U) \cong f_*(-)(U) & \cong {}  Hom_{\mathcal{E}}(Hom(-,U),f_*(-))\\
      & \cong {} Hom_{\mathcal{F}}(f^*(Hom(-,U)),-) \\
      & \cong {} H^0(\mathcal{F},f^*(Hom(-,U)),-): Ab(\mathcal{F}) \rightarrow Ab({Sets})
\end{align*}
So the lemma holds for $J$ minimal.

Suppose $J$ is an arbitrary Grothendieck Topology in ${\cal C}$, let $i = (i_*, i^*) : \mathcal{E} \rightarrow Set^{\mathcal{C}^{op}}$ be the inclusion geometric morphism, and define $g = i \circ f: \mathcal{F} \rightarrow Set^{\mathcal{C}^{op}}$. The adjoint properties guarantees that $i^*g_* = (i^*i_*)f_* \cong f_*$. Since $i^*$ is an exact functor, $i^*R^qg_* \cong R^q(i^*g_*) \cong R^q(f_*)$. By the facts that $l: \mathcal{C} \rightarrow Sh(\mathcal{C},J)$ is the canonical functor and $i^*$ is the associated sheaf functor \cite[Chap III.5]{maclane1992sheaves}, we have  $$g^*(Hom(-,U)) = f^*i^*(Hom(-,U)) = f^*l(U)$$ Finally, we apply this in the calculations for $J$ minimal and conclude the desired result. 
\end{proof}

\begin{prop}
If $F$ is a flabby sheaf, then $R^qf_*(F) = 0,$ for all $q > 0$. In other words, $F$ is $f_*$-acyclic.
\end{prop}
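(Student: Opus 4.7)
The plan is to deduce this as an almost immediate consequence of the preceding lemma together with the definition of flabbiness. Given a geometric morphism $f:\mathcal{F}\to\mathcal{E}$ with $\mathcal{E}=Sh(\mathcal{C},J)$, and given $F\in Ab(\mathcal{F})$ flabby, I would invoke the lemma to identify $R^qf_*(F)$, for each $q>0$, with the $J$-sheafification of the presheaf $P^q:\mathcal{C}^{op}\to Ab$ defined by $U\mapsto H^q(\mathcal{F},f^*l(U);F)$.

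The key step is then to observe that, for each object $U$ of $\mathcal{C}$, the object $f^*l(U)$ is in $\mathcal{F}$, and so the flabbiness hypothesis on $F$, applied with $B=f^*l(U)$, gives $H^q(\mathcal{F},f^*l(U);F)=0$ for every $q>0$. Hence the presheaf $P^q$ is the zero presheaf.

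To finish, I would use the fact that the associated sheaf functor $i^*:Set^{\mathcal{C}^{op}}\to Sh(\mathcal{C},J)$ is additive and preserves the zero object (it is a left adjoint, hence preserves colimits, and it is left exact, so it preserves all finite limits, including the terminal). Consequently the sheafification of the zero presheaf is the zero sheaf in $Ab(\mathcal{E})$, which by the lemma means $R^qf_*(F)=0$ for all $q>0$, as desired.

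There is no real obstacle here: once the preceding lemma is in place, the argument is just a matter of chasing definitions. The only subtle point worth remarking is that the functoriality in $U$ of $H^q(\mathcal{F},f^*l(U);F)$ (needed to make sense of $P^q$ as a presheaf) comes from the contravariant dependence of $Hom_{\mathcal{F}}(f^*l(U),-)$ on $U$ via $l$ and $f^*$, together with the naturality of derived functors; this is implicit in the statement of the lemma and does not require any new work.
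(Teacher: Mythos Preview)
Your proof is correct and follows exactly the same route as the paper: apply the preceding lemma to identify $R^qf_*(F)$ with the sheafification of $U\mapsto H^q(\mathcal{F},f^*l(U);F)$, then use the definition of flabbiness to see this presheaf vanishes for $q>0$. The paper's argument is terser (it does not spell out that sheafification preserves the zero object or comment on functoriality in $U$), but the content is identical.
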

\begin{proof}
We have $R^qf_*(F)$ is the $J$-sheaf associated to $U \mapsto H^q(\mathcal{F},f^*l(U);F)$, by the above Lemma. Since $F$ is flabby, $H^q(\mathcal{F},f^*l(U);F) = 0$ for all  $q > 0$. Thus, $R^qf_*(F) = 0,$ for all $q > 0$. 
\end{proof}

We also have a (Godement) resolution in this context \cite[page 265]{johnstone77topostheory}, and since the notion of flabby sheaf implies an acyclicity, we could use it to define cohomology groups using flabby sheaves instead of injective ones - see the discussion at the end of Section \ref{sec:6}. This approach is particularly interesting for cohomology in a topos because injectives resolutions depend on the axiom of choice to works properly while general toposes rely - internally - on intuitionistic logic. However, we observe that this definition of flabby does not coincide with the flabby definition for sheaves over topological spaces when $Sh(\mathcal{C},J) = Sh(X)$. Therefore, how we constructively generalize the flabby definition in $Sh(X)$ to $Sh(\mathcal{C},J)$? We do not know a definite answer to that but we will explain more about it in the last section.

\subsection{\v{C}ech Cohomology revisited}
\label{sec:13}

As expected, \v{C}ech Cohomoloy in the Grothendieck Topos case is more complicated. We will proceed more carefully now, and use some lemmas without proofs to not exceed in technicalities. 

We fix $\mathcal{E} = Sh(\mathcal{C},J)$, consider ${\cal P} = Set^{\mathcal{C}^{op}}$ it correspondent presheaves category, and  $i: \mathcal{E} \rightarrow {\cal P}$ the canonical inclusion. 

Suppose that $\mathcal{C}$ has pullbacks. For sheaves over topological spaces, when constructing the \v{C}ech Cohomology, we considered $U_{{i_0},...,{i_q}}$ as a   intersection of finite subfamilies of open sets that cover an open $U$. Now we need to find an analogous of this. Let $\mathcal{U}=(U_i \overset{f_i}\rightarrow U)_{i \in I} $ be a family of morphisms in $\mathcal{C}$, define\footnote{In other words, we select a specific pullback for each subfamily.}  $U_{{i_0},...,{i_q}} := U_{i_0} \times_U ... \times_U U_{i_q}$ (this is the \v{C}ech nerve). Applying morphisms $U_{{i_0},...,{i_q}} \xrightarrow{\delta_k} U_{{i_0},...,\widehat{i_k},...{i_q}}$ that ``forgets $i_k$'', we have a diagram in ${\cal P}$ as follows:

\begin{center}
\begin{tikzcd}
\dots \arrow[r, shift left=1 ex] \arrow[r, shift left=0.25 ex] 
\arrow[r, shift right=0.5 ex] 
& \coprod\limits_{i_0, i_1, i_2}h_{U_{i_0,i_1,i_2}} \arrow[r, shift left=1 ex] \arrow[r, shift right=0.5 ex]  & \coprod\limits_{i_0, i_1}h_{U_{i_0,i_1}}  \arrow[r] & h_U 
\end{tikzcd}  
\end{center}
where $h_U \cong Hom(-,U)$ is a representable functor. Remind that the forgetful functor $E: Ab({\cal P}) \rightarrow {\cal P}$ has a left adjoint $Z: {\cal P} \rightarrow Ab({\cal P})$, called free functor. Since left adjoint functors preserve colimits, we have a canonical isomorphism $Z(\coprod\limits_{j \in J}h_{V_j}) \cong \coprod\limits_{j \in J}Z(h_{V_j}) $.  

Apply the free functor in the diagram above to obtain a diagram in $Ab({\cal P})$:
\begin{center}
\begin{tikzcd}
\arrow[r, shift left=1 ex] \arrow[r, shift left=0.25 ex] 
\arrow[r, shift right=0.5 ex] 
& \coprod\limits_{i_0, i_1, i_2}Z(h_{U_{i_0,i_1,i_2}}) \arrow[r, shift left=1 ex] \arrow[r, shift right=0.5 ex]  & \coprod\limits_{i_0, i_1}Z(h_{U_{i_0,i_1}}) \arrow[r] & Z(h_U)
\end{tikzcd}  
\end{center}

Defining a boundary morphism $(d_q\alpha) = \sum\limits_{k=0}^{q+1}(-1)^k\alpha(\delta_k)_{\mid_{U_{{i_0},...,{i_{q+1}}}}}$, and using the above diagram, we construct a chain complex, denoted by $N_{\bullet}({\cal U})$, where $$N_q({\cal U}) := \coprod\limits_{i_0, i_1, ..., i_q}Z(h_{U_{i_0,i_1,...,i_q}})$$ Since, by \cite[Lemma 8.22]{johnstone77topostheory}, the sequence $\dots \rightarrow N_2({\cal U}) \rightarrow N_1({\cal U}) \rightarrow N_0({\cal U})$ is exact in ${Ab}({\cal P})$, we can use this chain complex to define the \textit{\v{C}ech cochain complex}.

Given a presheaf $F$ in $Ab({\cal P})$, the \textit{\v{C}ech cochain complex} is $$C^q(\mathcal{{U}},F) = Hom_{{Ab}({\cal P})}(N_q(\mathcal{U}),F),$$ with coboundary morphisms $d^q = - \circ d_q$. Since $(-\circ d_{q+1})\circ (-\circ d_{q}) = -\circ(d_q \circ d_{q+1}) = - \circ 0 = 0$, we define the $q$-th \v{C}ech cohomology group of $\mathcal{U}$ with coefficients in $F$ by $H^q(\mathcal{U},A) = Ker( d^{q})/Im(d^{q-1})$. 

Now, we want to define \v{C}ech Cohomology of an object in the category instead of its coverings.

Considering $\mathcal{V} = (V_j  \overset{g_j}\rightarrow U \enspace | \enspace j \in J)$ another  family of morphisms in $\mathcal{E}$
 that refines the family $\mathcal{U} = (U_i \overset{f_i}\rightarrow U \enspace | \enspace i \in I)$, we select a refinement map $r: \mathcal{V} \rightarrow \mathcal{U}$, that is,  a pair formed by a function $r : J \rightarrow I$ and  a family of factorizations $$\Bigg\{\begin{tikzcd}[cramped, sep=small] 
V_j \arrow[r, "r_j"] \arrow["g_j"', rd] & U_{r(j)} \arrow[d, "f_{r(j)}"] \\
 & U
\end{tikzcd} : j \in J\Bigg\}$$ 

In the following, we will abuse the notation and use $r_j$ to denote the value in the index set ($r_j = r(j) \in I$) and also the arrow in ${\cal C}$ ($r_j : V_j \to U_{r(j)}$). This will not cause confusion.

If $R$ is the sieve of $U$ generated by the family $\mathcal{U}$ (i.e., for any morphism $\alpha$ in $R$, $\alpha = f_i \circ h_i$, for some $i \in I$ and some $h_i$), then the inclusion map $\mathcal{U} \rightarrow R$  determines a refinement map.

\begin{prop}
Given $r, s: \mathcal{V} \rightarrow \mathcal{U}$ refinement maps, $r_{\bullet}$ and $s_{\bullet}$ are chain homotopic.
\end{prop}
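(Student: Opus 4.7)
The plan is to construct the chain homotopy explicitly via the classical ``prism'' formula from simplicial homotopy theory, adapted to the Čech nerve in the site $(\mathcal{C}, J)$. I will work at the level of the chain complex $N_\bullet$ in $Ab(\mathcal{P})$; since the maps $r_\bullet$ and $s_\bullet$ come from maps of Čech nerves of representables (to which the free functor $Z$ is then applied), it is enough to construct a simplicial homotopy between the underlying maps of presheaves.

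First I would set up the building blocks. For each multi-index $(j_0, \ldots, j_q) \in J^{q+1}$ and each $k \in \{0, \ldots, q\}$, I need a morphism
\[
\phi^k_{j_0,\ldots,j_q} : V_{j_0,\ldots,j_q} \longrightarrow U_{r(j_0),\ldots,r(j_k),s(j_k),\ldots,s(j_q)}
\]
in $\mathcal{C}$. Such a morphism is obtained from the universal property of the iterated pullback on the right: the required arrows into each factor are the composites $V_{j_0,\ldots,j_q} \to V_{j_\ell} \to U_{r(j_\ell)}$ (via $r_{j_\ell}$) for $\ell \leq k$ and $V_{j_0,\ldots,j_q} \to V_{j_\ell} \to U_{s(j_\ell)}$ (via $s_{j_\ell}$) for $\ell \geq k$, where the first arrow is the canonical projection from the pullback. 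The hypothesis $f_{r(j)} \circ r_j = g_j = f_{s(j)} \circ s_j$ ensures that all these composites become equal when postcomposed with the structural maps to $U$, so the universal property yields the desired $\phi^k_{j_0,\ldots,j_q}$.

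Next I would define the homotopy. On the summand of $N_q(\mathcal{V})$ indexed by $(j_0, \ldots, j_q)$, set
\[
h_q \;:=\; \sum_{k=0}^{q} (-1)^k \, Z\bigl(h_{\phi^k_{j_0,\ldots,j_q}}\bigr) \;:\; Z(h_{V_{j_0,\ldots,j_q}}) \longrightarrow N_{q+1}(\mathcal{U}),
\]
then extend by the coproduct universal property to all of $N_q(\mathcal{V})$. The remaining task is the verification of the cochain-homotopy identity
\[
d_{q+1} \circ h_q + h_{q-1} \circ d_q \;=\; r_q - s_q.
\]
This is a calculation with the face maps $\delta_k$: one expands both sides as alternating sums indexed by pairs $(k, \ell)$ and checks that each interior ``prism'' face cancels against its neighbour, leaving only the boundary faces corresponding to $k=0$ (giving $r_q$) and $k=q$ (giving $-s_q$). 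In essence this is exactly the classical proof that two simplicial maps with a common target are homotopic whenever they factor through a common simplicial set via refinement, transported into $\mathcal{C}$ through the representable-to-$Z$-to-$Ab(\mathcal{P})$ pipeline.

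The main obstacle is not the algebra of the alternating sum — that is the textbook simplicial identity — but rather the rigorous construction of the morphisms $\phi^k_{j_0,\ldots,j_q}$ in $\mathcal{C}$: one must keep track of compatibility of the projections from the pullback $V_{j_0,\ldots,j_q}$ with the arrows $r_{j_\ell}, s_{j_\ell}$ and with the structural arrows $f_i : U_i \to U$, and verify naturality with respect to the face operators so that the simplicial identities used in the cancellation step actually hold in $\mathcal{C}$ (and hence in $\mathcal{P}$ after applying $h_{(-)}$). Once this bookkeeping is dispatched, applying the additive functor $Hom_{Ab(\mathcal{P})}(-, F)$ converts the chain homotopy into the required cochain homotopy between $r^\bullet$ and $s^\bullet$, and Proposition \ref{homotopic} then yields a unique induced map on Čech cohomology, independent of the choice of refinement.
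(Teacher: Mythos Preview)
Your proposal is correct and follows essentially the same route as the paper: your morphisms $\phi^k_{j_0,\ldots,j_q}$ are exactly the paper's $t^l_\sigma = (r_{j_0},\ldots,r_{j_l},s_{j_l},\ldots,s_{j_q})$, and your alternating sum $h_q$ is (up to a harmless global sign) the paper's homotopy $t^{(q)}$. The only cosmetic difference is that the paper verifies the identity $d_{q+1}\circ t^{(q)} + t^{(q-1)}\circ d_q = r_q - s_q$ by an explicit element-level computation in the case $q=2$, whereas you invoke the general prism cancellation argument.
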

\begin{proof}
We have to find a sequence of morphisms $N_q(\mathcal{V}) \rightarrow N_{q+1}(\mathcal{U})$ that makes $r_{\bullet}$ and $s_{\bullet}$ chain homotopics. 

Consider $ \sigma = (j_0,...,j_q)$ where $j_0,...,j_q \in J$. For each $l \in \{0,1,...,q\}$ we define a morphism over $U$ as follows:

$$t^l_{\sigma} = (r_{j_0},...,r_{j_l},s_{j_l},...,s_{j_q}): V_{\sigma} \rightarrow U_{(r_{j_0},...,r_{j_l},s_{j_l},...,s_{j_q})} $$

This morphism induces a ``group homomorphism'':
$$Z({t^l_\sigma}) : Z(h_{V_{\sigma}}) \rightarrow Z(h_{U_{(r_{j_0},...,r_{j_l},s_{j_l},...,s_{j_q})}}).$$

We can perform  two kind of actions:

(i) Consider the alternating sum of homomorphisms $t_\sigma := \sum_{l =0}^q (-1)^{l+1} \iota_{\sigma}^l \circ Z({t^l_\sigma})$, for this fixed $\sigma$, where
$$\iota_\sigma^l : Z(h_{U_{(r_{j_0},...,r_{j_l},s_{j_l},...,s_{j_q})}})  \to \coprod\limits_{i_0, i_1, ..., i_q, i_{q+1}}Z(h_{U_{i_0,i_1,...,i_q,i_{q+1}}})$$
is the canonical homomorphism.

(ii) ``Put together'' the homomorphisms $Z(t^l_\sigma)$, for a fixed $l$:
$$t^l_q : \coprod\limits_{j_0, j_1, ..., j_q}Z(h_{V_{j_0,j_1,...,j_q}}) \to \coprod\limits_{i_0, i_1, ..., i_q, i_{q+1}}Z(h_{U_{i_0,i_1,...,i_q,i_{q+1}}})$$
that we denote simply by
$t^l_q: N_q(\mathcal{V}) \rightarrow N_{q+1}(\mathcal{U})$

These two actions can be applied in any order we choose, without changing the resulting homomorphism, that we will denote by

$$t^{(q)}: N_q(\mathcal{V}) \rightarrow N_{q+1}(\mathcal{U})$$

Since $r$ and $s$ are refinement maps we can extract indices  $i_0,...,i_q \in I$ from $j_0, ..., j_q \in J$, thus we obtain an homomorphism

$$(r_q - s_q) : \coprod\limits_{j_0, j_1, ..., j_q}Z(h_{V_{j_0,j_1,...,j_q}}) \to \coprod\limits_{i_0, i_1, ..., i_q}Z(h_{U_{i_0,i_1,...,i_q}}) .$$

The (non commutative) diagram we must have in mind is:

\begin{center}
\begin{tikzcd}
\dots \arrow[r, "d^{\mathcal{V}}_3"] & {\coprod\limits_{j_0,j_1,j_2}Z(h_{V_{j_0,j_1,j_2}}}) \arrow[r, "d^{\mathcal{V}}_2"] \arrow[d, "r_2 - s_2"'] & {\coprod\limits_{j_0,j_1}Z(h_{V_{j_0,j_1}})} \arrow[d, "r_1 - s_1"] \arrow[ld, "t^{(1)}"'] \arrow[r, "d^{\mathcal{V}}_1"] & \dots \\
\dots \arrow[r, "d^{\mathcal{U}}_3"] & {\coprod\limits_{i_0,i_1,i_2}Z(h_{U_{i_0,i_1,i_2}}}) \arrow[r, "d^{\mathcal{U}}_2"] & {\coprod\limits_{i_0,i_1}Z(h_{U_{i_0,i_1}})} \arrow[r, "d^{\mathcal{U}}_1"] & \dots
\end{tikzcd}
\end{center}

We will exhibit the homotopy chain construction for the case $q = 2$: 

Let $\sigma = (j_0,j_1), \enspace  t^0_{\sigma} = (r_{j_0},s_{j_0},s_{j_1}), \enspace t^1_{\sigma} = (r_{j_0},r_{j_1},s_{j_1})$, and denote 
$$t_\sigma = \sum\limits_{l=0}^1(-1)^{l+1}\iota^l_\sigma \circ Z(t^l_\sigma) = - \iota^0_\sigma \circ Z(t^0_\sigma) + \iota^1_\sigma \circ Z(t^1_\sigma) := - (r_{j_0},s_{j_0},s_{j_1}) + (r_{j_0},r_{j_1},s_{j_1}).$$

Let $\tau = (j_0,j_1,j_2)$ and $\alpha_{\tau}: Z(h_{V_{\tau}}) \rightarrow \coprod\limits_{j_0, j_1, j_2}Z(h_{V_{j_0,j_1,j_2}})$. For an object $C$ in $\mathcal{C}$, we take $\theta^C_\tau  \in  Z(h_{V_{\tau}}(C))$ and obtain $$(r_2 - s_2) \circ \alpha_{\tau}(\theta_{\tau}) = \theta_{r_{i_0},r_{i_1},r_{i_2}} - \theta_{s_{i_0},s_{i_1},s_{i_2}}.$$ Apart from that, $d^{\mathcal{V}}_2(\alpha_{\tau}(\theta_{\tau})) = \theta_{j_0,j_1} - \theta_{j_0,j_2} + \theta_{j_1,j_2}$. Applying $t^{(1)}$ in the last equation:
\begin{align*}
    t^{(1)} \circ d^{\mathcal{V}}_2(\alpha_{\tau}(\theta_{\tau})) &= t^{(1)}(\theta_{j_0,j_1} - \theta_{j_0,j_2} + \theta_{j_1,j_2}) =\\
    &+(- \theta_{r_{j_0},s_{j_0},s_{j_1}} + \theta_{r_{j_0},r_{j_1},s_{j_1}}) \\
    &- (- \theta_{r_{j_0},s_{j_0},s_{j_2}} + \theta_{r_{j_0},r_{j_2},s_{j_2}}) \\
    &+ (- \theta_{r_{j_1},s_{j_1},s_{j_2}} + \theta_{r_{j_1},r_{j_2},s_{j_2}} )
\end{align*}

On the other hand:

\begin{align*}
    d^{\mathcal{U}}_3\circ t^{(2)}(\alpha_{\tau}(\theta_{\tau})) &= d^{\mathcal{U}}_3(-\theta_{r_{j_0},s_{j_0},s_{j_1},s_{j_2}}+\theta_{r_{j_0},r_{j_1},s_{j_1},s_{j_2}} -\theta_{r_{j_0},r_{j_1},r_{j_2},s_{j_2}}) \\
    &=-(\theta_{s_{j_0},s_{j_1},s_{j_2}} - \theta_{r_{j_0},s_{j_1},s_{j_2}}+\theta_{r_{j_0},s_{j_0},s_{j_2}}-\theta_{r_{j_0},s_{j_0},s_{j_1}})\\
    &+(\theta_{r_{j_1},s_{j_1},s_{j_2}}-\theta_{r_{j_0},s_{j_1},s_{j_2}}+\theta_{r_{j_0},r_{j_1},s_{j_2}}-\theta_{r_{j_0},r_{j_1},s_{j_1}})\\
    &-(\theta_{r_{j_1},r_{j_2},s_{j_2}}-\theta_{r_{j_0},r_{j_2},s_{j_2}}+\theta_{r_{j_0},r_{j_1},s_{j_2}}-\theta_{r_{j_0},r_{j_1},r_{j_2}})
\end{align*}

Therefore, $$d^{\mathcal{U}}_3\circ t^{(2)}(\alpha_{\tau}(\theta_{\tau})) + t^{(1)} \circ d^{\mathcal{V}}_2(\alpha_{\tau}(\theta_{\tau})) = \theta_{r_{i_0},r_{i_1},r_{i_2}}  - \theta_{s_{i_0},s_{i_1},s_{i_2}}
    =(r_2 - s_2) \circ \alpha_{\tau}(\theta_{\tau}).$$
    
So the chain homotopy is proved for $q=2$. For the general case, we consider $\tau = (j_0,...,j_q)$ then, by similar calculations, we obtain:

$$(r_q-s_q)(\alpha_{\tau}(\theta_{\tau})) = d^{\mathcal{U}}_{q+1}\circ t^{(q)}(\alpha_{\tau}(\theta_{\tau})) + t^{(q-1)} \circ d^{\mathcal{V}}_q(\alpha_{\tau}(\theta_{\tau}))$$
\end{proof}

The following result provides an isomorphism between cohomology groups of a family of morphism $\mathcal{U}$ and cohomology groups of a sieve R generated by $\mathcal{U}$.

\begin{prop}
Let $\mathcal{U} = (U_i \rightarrow U \enspace | \enspace i \in I)$ be a family of morphisms and $R$ the sieve generated by $\mathcal{U}$. Then the inclusion $i: \mathcal{U} \rightarrow R$ induces an isomorphism $\mathrm{H}^q(\mathcal{U},F) \cong  \mathrm{H}^q(R,F)$, for any presheaf $F$ in $Ab(\mathcal{P})$.
\end{prop}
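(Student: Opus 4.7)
The plan is to exhibit $i : \mathcal{U} \to R$ as a chain homotopy equivalence at the level of the normalized complexes $N_\bullet(\mathcal{U})$ and $N_\bullet(R)$ in $Ab(\mathcal{P})$, and then deduce the isomorphism in cohomology by applying the contravariant functor $Hom_{Ab(\mathcal{P})}(-,F)$ and invoking Proposition \ref{homotopic}.

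First I would construct an explicit refinement map $r : R \to \mathcal{U}$ going the other way. By definition, any arrow $\alpha : V \to U$ in the sieve $R$ factors as $\alpha = f_{i(\alpha)} \circ h_\alpha$ for some index $i(\alpha) \in I$; choosing such a factorization for every $\alpha \in R$ (this is the only set-theoretic subtlety, and it uses a single choice, not a choice depending on~$F$) defines $r$ on objects and morphisms, so that $r$ is a refinement map in the sense introduced just before the previous proposition. The inclusion $i : \mathcal{U} \hookrightarrow R$ is itself a refinement map, via the identity factorizations.

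Next I would compare the two composites with the identities. The composite $r \circ i : \mathcal{U} \to \mathcal{U}$ is a refinement map, as is $\mathrm{id}_{\mathcal{U}}$; by the preceding proposition their induced chain maps $(r \circ i)_\bullet$ and $\mathrm{id}_{N_\bullet(\mathcal{U})}$ on $N_\bullet(\mathcal{U})$ are chain homotopic in $Ab(\mathcal{P})$. Likewise $i \circ r : R \to R$ and $\mathrm{id}_R$ induce chain homotopic maps on $N_\bullet(R)$. Functoriality of the passage from refinement maps to chain maps, which is clear from the construction of $N_\bullet$, yields $(r \circ i)_\bullet = r_\bullet \circ i_\bullet$ and $(i \circ r)_\bullet = i_\bullet \circ r_\bullet$.

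Finally I would apply the additive functor $Hom_{Ab(\mathcal{P})}(-,F)$ levelwise: it sends a chain homotopy $h_\bullet$ between chain maps of $N_\bullet$'s to a cochain homotopy between the dual cochain maps of $C^\bullet(-,F) = Hom_{Ab(\mathcal{P})}(N_\bullet(-),F)$. Thus $i_\bullet^\ast \circ r_\bullet^\ast$ and $r_\bullet^\ast \circ i_\bullet^\ast$ are cochain homotopic to the identities of $C^\bullet(R,F)$ and $C^\bullet(\mathcal{U},F)$ respectively. By Proposition \ref{homotopic}, passing to $q$-th cohomology turns these into mutually inverse isomorphisms $H^q(R,F) \cong H^q(\mathcal{U},F)$, natural in $F$. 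The step I expect to require the most care is the first one, namely setting up $r$ so that it really is a refinement map over $U$ (the chosen factorizations must be recorded as part of $r$, not merely the index assignment $\alpha \mapsto i(\alpha)$); once this is in place, all remaining work is bookkeeping already encapsulated in the previous proposition.
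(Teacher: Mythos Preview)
Your proposal is correct and follows essentially the same route as the paper: construct a refinement map $r:R\to\mathcal{U}$ from a choice of factorizations, invoke the preceding proposition to get that $r\circ i$ and $i\circ r$ are chain homotopic to the identities, and conclude via Proposition~\ref{homotopic}. Your write-up is in fact more careful than the paper's about the choice needed to define $r$ and about the dualization step through $Hom_{Ab(\mathcal{P})}(-,F)$.
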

\begin{proof}
Since $R$ is generated by $\mathcal{U},$ there is a refinement map $h : R \rightarrow {\cal U}$. On the other hand, we also have that the inclusion $i: \mathcal{U} \rightarrow R$ is a refinement map. By the previously proposition, this refinement is unique up to homotopy, thus $h \circ i$ and $i \circ h$ are cochain homotopic to the corresponding identity refinements. So, by Proposition \ref{homotopic},   $i$ induces a map in the cohomology group that is invertible. In other words,
 $\mathrm{H}^q(\mathcal{U},F) \cong  \mathrm{H}^q(R,F)$, canonically.

\end{proof}

If $\mathcal{C}$ has pullbacks we can define \v{C}ech cohomology groups of an object $U$ of $\mathcal{C}$ with coefficient in an abelian presheaf $F$ in $\mathcal{C}$ as the filtered colimit below
$$ \check{\mathrm{H}}^q(U,F) := \varinjlim\limits_{R \in J(U)} H^q(R,F) $$

Previously, we defined \v{C}ech Cohomology for a family of morphisms with the same codomain instead of considering sieves, but both cases are related: we can switch cover sieves with the family that generates it, by the above Proposition. 
We introduce this definition to obtain an analogous version of Theorem \ref{Cech-te} for Grothendieck topos.

\begin{teo}
Let $U$ be an object in $\mathcal{C}$ and $F$ a sheaf in $Ab(\mathcal{E})$. There is a homomorphism $k^q: \check{\mathrm{H}}^q(U,F) \rightarrow \mathrm{H}^q(\mathcal{E},l(U);F)$, $q \in \mathds{N}$, where $l: \mathcal{C} \rightarrow Sh(\mathcal{C},J)$ is the  canonical functor. Moreover,  $k^q$ is a isomorphism if $q = 0$ or $1$, and it is a monomorphism if $q=2$.
\end{teo}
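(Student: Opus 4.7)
The plan is to follow the strategy of Theorem \ref{Cech-te} with the extra machinery needed in a Grothendieck topos, namely a Cartan--Leray spectral sequence relating \v{C}ech and sheaf cohomology. First, I would fix an injective resolution $F \to I^\bullet$ in $Ab(\mathcal{E})$ and, for each covering sieve $R \in J(U)$, consider the double complex $K^{p,q}(R) := C^p(R, i(I^q))$ of \v{C}ech cochains with coefficients in the components of this resolution (viewed as presheaves via $i : \mathcal{E} \to \mathcal{P}$). Taking the filtered colimit over $R \in J(U)$ under refinement (using Lemma \ref{giraud}.4 to keep control of the colimit) yields a double complex whose two standard spectral sequences both abut to $H^{p+q}(\mathcal{E}, l(U); F)$. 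One of them, computed by first taking vertical cohomology, has $E_2$-page
$$E_2^{p,q} = \check{\mathrm{H}}^p(U, \underline{H}^q(F)),$$
where $\underline{H}^q(F)$ is the abelian presheaf $V \mapsto H^q(\mathcal{E}, l(V); F)$.

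Second, since $F$ is a sheaf, one identifies $\underline{H}^0(F) \cong F$ canonically, so $E_2^{p,0} = \check{\mathrm{H}}^p(U,F)$. The edge morphism of the spectral sequence then provides the desired natural homomorphism
$$k^q : \check{\mathrm{H}}^q(U,F) = E_2^{q,0} \longrightarrow H^q(\mathcal{E}, l(U); F), \qquad q \in \mathds{N}.$$
For $q = 0$ the spectral sequence degenerates trivially in degree zero and $k^0$ reduces to the isomorphism $F(U) \cong \mathrm{Hom}_{\mathcal{E}}(l(U), F)$. For $q = 1, 2$, I would invoke the five-term exact sequence
$$0 \to E_2^{1,0} \to H^1 \to E_2^{0,1} \to E_2^{2,0} \to H^2,$$
which immediately yields $k^1$ an isomorphism and $k^2$ a monomorphism provided the vanishing $E_2^{0,1} = \check{\mathrm{H}}^0(U, \underline{H}^1(F)) = 0$ holds.

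The main obstacle is precisely this vanishing. The idea is that $\underline{H}^1(F)$ is a presheaf whose sheafification is zero: since the associated-sheaf functor $a : \mathcal{P} \to \mathcal{E}$ is exact and commutes with cohomology of complexes, and since $i(I^\bullet)$ sheafifies back to the injective resolution $I^\bullet$ of $F$, one has $a(\underline{H}^q(F)) \cong H^q(I^\bullet) = 0$ for all $q > 0$. The delicate step is then to pass from ``sheafification is zero'' to ``$\check{\mathrm{H}}^0$ over $U$ is zero'', which is where the interaction between the plus construction, the filtered colimit over refinements of covering sieves, and the definition of sheafification must be used carefully; this is essentially the content alluded to in \cite[Lemma 8.26]{johnstone77topostheory}. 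Once this vanishing is established, the functoriality and naturality of $k^q$ follow from the standard naturality of spectral sequences of double complexes, and the low-dimensional part of the theorem follows from the five-term exact sequence as above.
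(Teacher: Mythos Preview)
Your proposal is correct and follows precisely the approach the paper refers to. The paper does not supply its own proof of this theorem; it simply states that ``the proofs for both results above use spectral sequences and can be found at \cite[Chap 8]{johnstone77topostheory}.'' Your outline is exactly Johnstone's Cartan--Leray argument: the double complex $C^p(R, i(I^q))$, passage to the filtered colimit over $J(U)$, the identification of the $E_2$-page as $\check{\mathrm{H}}^p(U,\underline{H}^q(F))$, the edge homomorphism as $k^q$, and the five-term exact sequence for the low-degree statements. Your diagnosis of the key step --- the vanishing $\check{\mathrm{H}}^0(U,\underline{H}^1(F)) = 0$, obtained from the fact that every local section of $\underline{H}^q(F)$ ($q>0$) dies on a cover because $I^\bullet$ is an exact complex of sheaves --- is also correct, and your pointer to \cite[Lemma 8.26]{johnstone77topostheory} is the right place in Johnstone where this is handled.
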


To have an isomorphism in other cases we need to impose conditions on subsets of the set of objects in $\mathcal{C}$ as follows:

\begin{prop}
Let $\mathcal{E} = Sh(\mathcal{C},J)$, $F$ sheaf in $Ab(\mathcal{E})$. If there is a subset $K$ of the set of objects in $\mathcal{C}$ such that:
\begin{enumerate}
    \item[(i)] $\check{\mathrm{H}}^q(V,F) = 0, \forall q > 0$, for each $V \in K;$
    \item[(ii)] For each object $U$ in $\mathcal{C}$, there is a $J$-cover $\{V_j \overset{g_j}\rightarrow U \enspace | \enspace j \in J\}$ with $V_j \in K, \forall j \in J$; \item[(iii)] Every pullback of the form $V \times_U W$ is in $K$, whenever $V$ and $W$ are in $K$.
\end{enumerate}
Then the homomorphism $ k^q : \check{\mathrm{H}}^q(U;F) \rightarrow \mathrm{H}^q(\mathcal{E},l(U);F) $ is an isomorphism for any object $U$ in $\mathcal{C}$ and for all $q\in\mathds{N}$
\end{prop}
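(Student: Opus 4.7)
My strategy is a double-complex spectral sequence argument combined with a cofinality argument, carried out by strong induction on $q$. The base cases $q \leq 1$ are given by the preceding theorem (and for $q = 2$ one gets injectivity of $k^2$ for free). For the induction, I first observe that the $K$-covers of any object $U$ (those whose members all lie in $K$) form a cofinal subsystem of $J(U)$: given an arbitrary $J$-cover $\{W_\alpha \to U\}$, apply (ii) to each $W_\alpha$ and compose using the transitivity axiom of Grothendieck topologies to obtain a refinement by elements of $K$. Consequently $\check{\mathrm{H}}^q(U, F)$ may be computed as the filtered colimit of $\check{\mathrm{H}}^q(\mathcal{U}, F)$ over $K$-covers $\mathcal{U}$ alone, and by (iii) every iterated pullback $V_{j_0, \ldots, j_p}$ appearing in such a $\mathcal{U}$ is again in $K$.

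For the inductive step, assume $k^p_W$ is an isomorphism for all $W$ and all $p < q$; together with (i) this yields $\mathrm{H}^p(\mathcal{E}, l(V); F) \cong \check{\mathrm{H}}^p(V, F) = 0$ for every $V \in K$ and $0 < p < q$. Choose an injective resolution $F \to I^{\bullet}$ in $Ab(\mathcal{E})$ and, for a fixed $K$-cover $\mathcal{U}$ of $U$, form the double complex $C^{p, r} = \prod_{j_0, \ldots, j_p} I^r(V_{j_0, \ldots, j_p})$. Both associated spectral sequences converge to the total cohomology of $C^{\bullet, \bullet}$. The ``rows first'' one collapses because each $I^r$ is injective, hence flabby (by the previous result on flabbiness of injectives in $Ab(\mathcal{E})$), and flabby objects have vanishing higher Čech cohomology; the abutment is thus $\mathrm{H}^{p+r}(\mathcal{E}, l(U); F)$. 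The ``columns first'' one has $E_2^{p, r} \cong \check{\mathrm{H}}^p(\mathcal{U}, \mathcal{H}^r)$, where $\mathcal{H}^r$ denotes the presheaf $V \mapsto \mathrm{H}^r(\mathcal{E}, l(V); F)$; by the inductive hypothesis and (iii), this vanishes for $0 < r < q$. A computation of the surviving differentials (which run out of $E_r^{0, q}$ only when $r = q + 1$, and land in $E_r^{q, 0}$ from sources that are already zero) shows that $k^q_{\mathcal{U}}$ is injective and identifies $\mathrm{H}^q(\mathcal{E}, l(U); F) / \check{\mathrm{H}}^q(\mathcal{U}, F)$ with a subquotient of $E_2^{0, q}(\mathcal{U})$.

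To conclude, I pass to the filtered colimit over $K$-covers of $U$. An element of this residual cokernel is represented by a compatible family of classes in $\mathrm{H}^q(\mathcal{E}, l(V_{j_0}); F)$; applying (ii) to each $V_{j_0}$ produces a further $K$-refinement on which, by (i), each such class becomes trivial in Čech cohomology, and therefore by the injectivity already established also in $\mathrm{H}^q$. The cokernel thus vanishes in the colimit, so $k^q_U$ is an isomorphism, completing the induction. The main technical obstacle I anticipate is the spectral sequence bookkeeping in the inductive step — in particular, coordinating the cofinality of $K$-covers with the annihilation of cokernel classes so that both the degeneration of higher rows and the vanishing of the edge term $E_\infty^{0, q}$ take place in the same colimit.
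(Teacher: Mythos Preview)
The paper does not give a proof of this proposition; it simply records that ``the proofs\ldots\ use spectral sequences'' and refers the reader to Johnstone, \textit{Topos Theory}, Chapter~8. Your strategy---the \v{C}ech-to-derived-functor double-complex spectral sequence, combined with induction on $q$ and the cofinality of $K$-covers guaranteed by~(ii)---is precisely the approach indicated there, so at the level of method there is nothing to compare.

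There is, however, a genuine gap in your final paragraph, exactly at the point you yourself flag as the main obstacle. You have correctly established that, for a $K$-cover $\mathcal U$ of $U$, the map $k^q_{\mathcal U}$ is injective and that its cokernel embeds in $E_2^{0,q}(\mathcal U)=\check{\mathrm H}^0(\mathcal U,\mathcal H^q)$, so that a class in the cokernel is represented by a compatible family $(\alpha_{j_0})$ with $\alpha_{j_0}\in\mathrm H^q(\mathcal E,l(V_{j_0});F)$. But the sentence ``by~(i), each such class becomes trivial in \v{C}ech cohomology, and therefore by the injectivity already established also in $\mathrm H^q$'' does not go through: hypothesis~(i) asserts $\check{\mathrm H}^q(V,F)=0$ for $V\in K$, which is a statement about the \emph{source} of $k^q_V$, whereas the classes $\alpha_{j_0}$ live in the \emph{target} $\mathrm H^q(\mathcal E,l(V_{j_0});F)$. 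Injectivity of $k^q_V$ only says that the (zero) image of $\check{\mathrm H}^q$ injects; it cannot force an arbitrary element of $\mathrm H^q$ to vanish after restriction. So the passage to a further $K$-refinement does not, on its own, annihilate the family $(\alpha_{j_0})$.

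The missing ingredient is that the presheaf $\mathcal H^q:V\mapsto\mathrm H^q(\mathcal E,l(V);F)$ has \emph{zero associated sheaf} for $q>0$. Indeed, with $F\to I^\bullet$ an injective resolution, $\mathcal H^q$ is the $q$-th cohomology of the complex of presheaves $V\mapsto I^\bullet(V)$; since sheafification is exact and each $I^r$ is already a sheaf, $a(\mathcal H^q)$ is the $q$-th cohomology of $I^\bullet$ computed in $Ab(\mathcal E)$, which vanishes. Consequently the colimit $\varinjlim_{\mathcal U}\check{\mathrm H}^0(\mathcal U,\mathcal H^q)=(\mathcal H^q)^{+}(U)$ injects into $a(\mathcal H^q)(U)=0$, and the residual cokernel dies in the colimit. (Equivalently, one first runs the spectral sequence for each $V\in K$, using \emph{all} of~(i)---including $\check{\mathrm H}^{q+1}(V,F)=0$, which kills the only outgoing differential $d_{q+1}$ from $E^{0,q}$---together with $a(\mathcal H^q)=0$ to obtain $\mathrm H^q(\mathcal E,l(V);F)=0$; the spectral sequence for a general $U$ then collapses outright.) Your refinement-plus-injectivity step does not supply this vanishing, and without it the induction does not close.
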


The proofs for both results above use spectral sequences and can be found at \cite[Chap 8]{johnstone77topostheory}. 

In particular, this last result can be applied to show the coincidence between sheaf and \v{C}ech cohomology in two cases, each one mentioned in \ref{sec:7} and \ref{sec:8}. Respectively, they are: 
\begin{enumerate}
    \item[(i)] $({\cal C},J)$ as the site canonically associated to a paracompact Hausdorff space $X$ and the coefficient $F$ as any sheaf of abelian groups in $Sh({\cal C}, J)$;
    \item[(ii)] $({\cal C},J)$ as the site canonically associated to a  scheme $(X, {\cal O}_X)$ and the coefficient sheaf $F$ as any quasi-coherent ${\cal O}_X$-module.
\end{enumerate}

\subsection{Applications}
\label{sec:14}

We already mentioned that Grothendieck topos cohomology was constructed to prove Weil’s conjectures. However, for this propose, Étale Cohomology is enough: there is no need to work with an arbitrary site $(\mathcal{C},J).$ If $\mathcal{C}$ is the slice category of schemes over a scheme $X$, where the objects are étale morphisms $Spec(R) \xrightarrow{f} X$, and, by abuse of notation, the morphism $f \xrightarrow{\varphi} g$ are morphisms of schemes $Spec(R) \xrightarrow{\varphi} Spec(R\text{\textquoteright}) $ such that $g \circ \varphi = f$.

Étale cohomology has good properties, e.g, can be related to singular cohomology, and has a Künneth formula, and Poincaré Duality with an adequate formulation. Furthermore, it has applications in number theory, $K$-theory, and representation theory of finite groups, besides its original use in algebraic geometry for fields different of $\mathds{C}$  and $\mathds{R}$. 

For other sites, we obtain other cohomologies such as crystalline, Deligne, and flat cohomologies. They also are instances of the Grothendieck topos cohomology we presented.

There are other kinds of applications of Grothendieck topos cohomology. If $\mathcal{C}$ is a small category, and $F$ is an abelian presheaf in $Ab(Set^{C^{op}})$, we can define a cochain complex $C^q(\mathcal{C},F) = \prod\limits_{c_0\leftarrow...\leftarrow c_q } F(c_q)$ with an appropriate coboundary $d^{q}: C^{q}(\mathcal{C},F) \rightarrow C^{q+1}(\mathcal{C},F)$, to obtain $H^{q}(\mathcal{C},F) = Ker(d^q)/Im(d^{q-1})$ as the cohomology groups of the category $\mathcal{C}$ with coefficients in $F$. Then, we have an isomorphism  $H^{\bullet}(\mathcal{C},F) \cong H^{\bullet}(Set^{\mathcal{C}^{op}},F)$. For a proof of this and an explicit description of the coboundary maps, consult \cite[Chap.II.6]{moerdijk1995classifying}. 

A simple example of this isomorphism manifests when the presheaf is $Set^G$, where $G$ is group seen as a category with a single object. In such case, the objects in $Ab(Set^G)$ are right modules over the group ring $\mathds{Z}G$. Thus, the cohomology groups of $G$ obtained from group cohomology are isomorphic to the sheaf cohomology groups of  $Set^{\mathcal{C}^{op}}$. This is better know in the form $H^{\bullet}(BG,M) \cong H^{\bullet}(G,M)$, where $BG$ is the classifying space of $G$ and $M$ is a $G$-module. Consult \cite[Chap. II]{adem2013cohomology} to see the usual approach. 
Furthermore, given a topological group $G$, there is a natural and useful variation of the formerly mentioned group cohomology but defined on the abelian category ${\cal A}_G$ of all {\em discrete $G$-modules for a continuous $G$-action} - in particular, this is the case studied in profinite cohomology, that encompasses Galois Cohomology. If ${\cal E}_G$ is the category of all sets (i.e., discrete spaces) endowed with a continuous $G$-action and $G$-equivariant maps, then: ${\cal E}_G$ is a Grothendieck topos, the expected equivalence $Ab({\cal E}_G) \simeq {\cal A}_G$ holds, and the cohomology of the topos ${\cal E}_G$ coincides with the above described {\em continuous} cohomology of $G$.

Hence, Grothendieck topos cohomology also is related to non-sheaf cohomology, and not only with cohomology for specific sites. We will provide further applications in the next section.

\section{Remarks and New Frontiers}
\label{sec:15}

Topos are excellent environments for internalizing mathematical objects, and we can write formulas for a language (type theory)
like arrows hitting the subobject classifier.
For example, to each formula  $\phi(x)$  with a free variable $x$ of type $ X $ is
associated with the  subobject of $X$ that classically corresponds to
“$ \{x \in X \mid \phi (x) \} $”. In this way, we can interpret a
high order type theory in a topos via the so-called semantics of
Kripke-Joyal. Results on elementary topos include that they are finitely co-complete, represent the idea of ``parts of an object'' and that its internal logic is intuitionist and, in particular, the parts of an object define an internal Heyting algebra. So, a topos is an environment
for higher order intuitionist mathematics --- evidently not all the topos
 are equivalent, so there is a diversity of environments.

Daily mathematics makes use of set theories to represent higher-order aspects of mathematical theories: this can be understood as the use of the higher-order internal logic of the $ Set $ topos. Since the 1970s, mathematical applications of higher-order intuitionist internal logic approaches have been applied to topos:\\
(i) an internal approach to the Serre-Swan duality, through a simple theorem, was described in \cite{mulvey1974intuitionistic} (essentially) of Linear Algebra, Kaplansky's Theorem \footnote{Every module on a local ring that is projective and finitely generated is a free module.}; \\
(ii) in model constructions, via Grothendieck topos, of synthetic differential geometry (\cite{moerdijk1991models}, \cite{mclarty1992elementary}): for instance, in \cite{moerdijk1991models}, there is an internal version of de Rham Theorem (a deep connection between de Rham cohomology and singular homology);\\
(iii) to represent results of quantum mechanics as results of classical mechanics internal to a topos \cite{flori2013first}; \\
(iv) in algebraic geometry, although the origin of Grothendieck's notion of topos came from specific needs of algebraic geometry, more systematic explorations of the internal language of topos for this area are very recent: e.g.,  \cite{blechschmidt2018using} contains a dictionary between the external and the internal point of view (for example, objects in a topos, are, internally, just sets; monomorphism are injections; sheaves of rings are rings),  works with the big and small Zariski Topos associated to a scheme to exhibit simpler definitions and proofs by using the internal language provided by these toposes, and in \cite{blechschmidt2018flabby} explores a proposal of a constructive version of the main homological tools (flabby and injective objects).

Attempts to develop constructive approaches to homological algebra, without the aid of axiom of choice (as in the usual injective resolution construction), are different from ``cohomology in topos'', although they can be related. In the latter case, we usually are interested in a Grothendieck topos, and constructing cohomology groups with coefficient in $Ab(Sh(\mathcal{C},J)),$ for some site $(\mathcal{C},J)$. That is exactly what we exposed in the previous section, using P. Johnstone's book ``Topos Theory'' \cite{johnstone77topostheory}, as the main reference. However, similar to the extension of sheaf cohomology to Grothendieck topos cohomology, how could we extend Grothendieck topos cohomology to (elementary) topos cohomology? The first problem is that for an elementary topos $\mathcal{E}$ we can not guarantee that $Ab(\mathcal{\mathcal{E}})$ have enough injectives so it is not clear how to define the right derived functors - there is a form to construct them using noetherian abelian categories \cite{WILDESHAUS2000207}, but we do not know any systematically study to identify when $Ab(\mathcal{E})$ is noetherian abelian. Still in the topic of ``topos cohomology'' we could try to switch $Ab(\mathcal{\mathcal{E}})$ for $\mathcal{E}$. In this direction, we have the work of I. Blechschmidt that is closely related to develop a constructive version of homological algebra: he reintroduces the concepts of injective object and flabby sheaf, as objects in an elementary topos, and replaces injective resolutions with flabby resolutions to avoid the use of the axiom of choice. However, in the final chapter of his article, he calls attention to the open problem of how to embed an arbitrary sheaf of modules into a flabby sheaf in intuitionistic logic. We understand that this way of proceeding (defining objects inside a topos) was successfully adopted before in another context, by A. Grothendieck, when he defined the fundamental group on a topos and originated the ``Grothendieck's Galois Theory'' \cite{grothendieck1971revetement}. The theory was later extended by A. Joyal and M. Tierney in \cite{joyal1984extension}. The results of this latter article are constantly used in nowadays works, which indicates that studies in the same direction for homological algebra would provide important discoveries.

Pertinent to this discussion, we can cite Blass's work that shows cohomology can detect the failure of the axiom of choice \cite{blass1983cohomology}. He demonstrates the axiom of choice is equivalent to $H^1(X,G) = 0$, for all discrete set $X$, and all group $G$. Also, the triviality of $H^1(X,G)$ for all $G$ is equivalent to the projectivity of $X$. This strengthens the relation between logic and geometry that we have been pointing through toposes. Note Blass's results indicate a justification for the fact that $Ab(\mathcal{E})$ does not have enough projectives, in general, because of toposes' intuitionistic logic.

We believe the subject of ``topos cohomology'' is far from maturity. One of the main references into the subject, SGA4, only addresses the case $Ab(Sh(\mathcal{C},J))$. P. Johnstone, one of the most prominent topos theorists of our days, had not published the third volume of ``Sketches of an Elephant: A Topos Theory Compendium'' that would contain the subject of homotopy and cohomology in toposes (besides chapters about toposes as mathematical universes) and the first two volumes were released in 2002 \cite{johnstone2002sketches,johnstone2002sketches2}.

Regarding constructive methods for homological algebra, there also are investigations not involving toposes. For example, in \cite{ubsi_1179}, S. Posur's provides constructive methods in the context of abelian categories using generalized morphisms (we highlight it is not the same definition given by S. MacLane in \cite{zbMATH01216133}). He proves the Snake Lemma, establish what are generalized cochain complex and generalized homological groups, and present a notion of homological group in a concrete way, i.e., he displays explicitly the connecting morphism, and not only states it exists by universal properties. More than that, he applies the theoretical definitions to create an algorithm capable of computing spectral sequences for a certain abelian category, and use it to calculate cohomology groups of (specific) equivariant sheaves.

 In recent work (\cite{shulman2018linear}), M. Schulman defends that ``Linear Logic'' can clarify some constructive methods better than intuitionistic logic. We highlight Schulman's state that it provide constructivist definitions (and proofs) of concepts elaborated in classical logic. Then a ``linear approach'' could also be useful for the problems we mentioned concerning constructive cohomology. Furthermore, generalized metric spaces (or quasi-psudo-metric space, or Lawvere metric space \cite{lawvere1973metric}) can be redefined using linear logic \cite{shulman2018linear}.

Linear Logic is a weakening of intuitionistic logic: it is a ``sub-structural'' logic, i.e., the usual demonstrability rules do not apply in general, with only restricted versions of the contraction and weakening rules available.
In linear logic, the intuitionist conjunction  splits into two
binary operators: $\wedge$,
the  binary infimum of the lattice, which is not necessarily
distributes over the supreme; $\&$,
another operation that does distribute with arbitrary supreme,
but it doesn't have to be idempotent or commutative.

The study of linear logic was initially developed by Jean-Yves Girard \cite{girard1987linear}
in the context of polymorphic $ \lambda $ calculus, but its nature matches
- through splits - somewhat irreconcilable elements, and their
many interpretations have profound meaning. Pure
intuitionistic contexts cannot prove the excluded middle law and, in 
classical logic, this is nothing less than an axiom. Linear logic has two
candidates for disjunction $ \lor $, one for which it is impossible to prove the 
excluded middle, and another for which the evidence is trivial.

The presence of “duplication” of operators is natural, as these
represent useful fragments of the usual logical operations. The result
interesting is related to the famous correspondence of
Curry-Howard: in the same way that intuitionistic logic is related
with type theory and $ \lambda $ calculation simply typed (the
implication can be interpreted as the type of functions, conjunction with
product and disjunction with co-product) giving rise to “proof-relevance”,
linear logic introduces, via non-idempotency or non-commutativity,
the relationship of linear implication to processes that are
“Resource-relevant”.

Categorical semantics for various forms of linear logics have long been explored (e.g.   \cite{seely1987linear}, \cite{hyland1993full}). Roughly speaking, we can say that closed monoidal categories have (some form of) internal linear logic.

Something very different occurs when we focus on possible conjunctistic or higher-order aspects (\cite{lambekscott1986catlog}, \cite{bell1988topos}) that are internal to a special type of category governed by some form of linear logic.

A natural, and relatively simple, way to expand the notion of (categories of) sheaves  with internal logic that is no longer intuitionistic is through appropriate adaptations of the sheaf notion defined over a complete Heyting algebra $(H, \leq, \wedge) $ to other algebras that are also complete lattices.

These set-theoretical aspects of the sheaves on ``good'' complete lattices can also be approached in an alternative, but often ``equivalent'' way, through the notion of expansion of the  universe of all sets, $V$, by an algebra, $A$, which is a complete lattice, $ V^{(A)} $: in the (traditional) case where $ A $ is a Boolean algebra or Heyting algebra this is presented in \cite{bell2005boolean}.

 The  complete lattices that have natural relationship with linear logics are the quantales  (see \cite{yetter1990quantal}). A quantale  $ (Q, \leq, \otimes, \top) $ is a structure where: $ (Q, \leq) $ is a complete lattice where $\top$ is the top element, $ (Q, \otimes) $ is a semigroup and the distributive laws are valid: $ a  \otimes \bigvee_{i \in I} b_i = \bigvee_{i \in I} a \otimes b_i $, $ (\bigvee_{i \in I} b_i) \otimes a = \bigvee_{i \in I} b_i \otimes a $.
 
 There are some early explorations of
the strategy of considering ``generalized sheaves'', with  applications in Mathematics. In \cite{con97}, is established a notion of  category of ``sheaves'' over a quantale  $ (Q, \leq, \otimes, \top) $, which is right-sided ($ a \otimes \top = a, a \in Q $) and idempotent ($ a \otimes a = a, a \in Q $), and is explored  the above mentioned Kaplansky's Theorem, now reformulated in the internal linear logic of this category of sheaves. In the work \cite{MeMa},  categories of sheaves are considered over quantales $ (Q, \leq, \otimes, \top) $ satisfying a different balance: they are commutative and semicartesian (or two-sided\footnote{Since it is already commutative,  this is the same that require to be right-sided.}). It is important to emphasize that the two-sided, commutative, and {\em idempotent} quantales coincides with the complete Heyting algebras.

In \cite{MeMa}, given a commutative semicartesian quantale $(Q,\leq,\odot, 1)$, we can construct what is called a $Q$-$set$, in the same spirit of the construction of sheaves over complete Heyting algebras. These $Q$-$set$ will not provide a sheaf, but will preserve a significant part of a sheaf structure, which had motivated the authors to called it a ``Sheaf-Like category'', besides that, pseudo-metric spaces are examples of a $Q$-$set$ (when $Q = ([0,\infty],\geq, +, 0)$), and also of an enriched category over $Q$. This approach seems to expand the development of model theory of Continuous Logic, useful in Functional Analysis.

In \cite{leinster2017magnitude}, M. Schulman and T. Leinster use semicartesian monoidal categories $V$ to define magnitude homology of $V$-categories (enriched categories over $V$). In particular, if $V$ is the extended non-negative real numbers $[0, \infty]$, that admits a natural structure of a commutative semicartesian  quantale  $([0,\infty],\geq, +, 0)$, then the correspondent $V$-category is a generalized metric space. Magnitude homology describes a general notion of ``size''. Depending on the case, it coincides with the cardinality of a set, the Euler Characteristic of topological space, or of an associate algebra. For the metric space context, magnitude machinery provide interesting geometric properties as area \cite{willerton2014magnitude}, volume \cite{barcelo2018magnitudes}, and Minkowski dimension \cite{meckes2015magnitude}.

This conjuncture motivates the authors of this survey to wonder about internal cohomological aspects to other categories governed by other logics. In particular: (i) if developments in continuous model theory -for instance with applications to the theory of Banach algebras- could have internal cohomological aspects better represented in the linear logic style; (ii) if exploring metric spaces as enriched categories over a quantale, it is natural to consider possible connections between  magnitude (co)homology with an adapted sheaf-like cohomology by some  appropriate version of sheaves over quantales.\\

\textbf{Acknowledgements:} The comments of  Peter Arndt and Walter de Siqueira Pedra, members of the judging committee in the master dissertation (supported by CNPq) of the first author, under supervision by the second author, guided for paths of study we would not perceive alone, so improving this text with more applications and related lines of research. We are thankful for it.

%

\bibliographystyle{abbrv}
\bibliography{bibliography}  

\end{document}